\newcommand{\tfs}{time-frequency shift}
\def\lrd{L^2(\Rdst)}
\def\rd{\Rdst}
\def\rdd{\Rtdst}
\newcommand{\tfa}{time-frequency analysis}
\def\cG{\mathcal{G}}
\newcommand{\modsp}{modulation space}
\newcommand{\vs}{\vspace{3 mm}}
\newcommand{\fif}{if and only if}
\newcommand{\ft}{Fourier transform}
\newcommand{\tf}{time-frequency}
\def\cS{\mathcal{S}}
\newcommand{\bN}{\Nst}
\newcommand{\bZ}{\Zst}
\def\intrdd{\intrd{\rdd}}
\def\zdd{{\bZ^{2d}}}
\def\zd{\bZ^d}
\def\inv{^{-1}}
\def\cW{\mathcal{W}}
\def\intrdd{\int_{\rdd}}
\newcommand{\map}{\tau}
\newcommand{\mapn}[1]{\map_n(#1)}
\newcommand{\modzero}{M^0}
\newcommand{\regone}{(L1)}
\newcommand{\regtwo}{(L2)}
\newcommand{\covnum}{\eta}
\newcommand{\weakconv}{\xrightarrow{w}}
\newcommand{\lipconv}{\xrightarrow{Lip}}
\newcommand{\Rdst}{{\mathbb{R}^d}}
\newcommand{\Rst}{{\mathbb{R}}}
\newcommand{\Zst}{{\mathbb{Z}}}
\newcommand{\Nst}{{\mathbb{N}}}
\newcommand{\Rtdst}{{\mathbb{R}^{2d}}}
\newcommand{\Zdst}{{\mathbb{Z}^d}}
\newcommand{\Ztdst}{{\mathbb{Z}^{2d}}}
\newcommand{\LtRd}{{L^2(\Rdst)}}
\newcommand{\LtRtd}{{L^2(\Rtdst)}}
\newcommand{\norm}[1]{\lVert#1\rVert}
\newcommand{\bC}{{\mathbb{C}}}
\newcommand{\set}[2]{\big\{ \, #1 \, :  \, #2 \, \big\}}
\def\Shah{{\makebox[2.3ex][s]{$\sqcup$\hspace{-0.15em}\hfill $\sqcup$}\, \, }}
\def\supp{\operatorname{supp}}
\newcommand{\Bignorm}[1]{\Bigl\|#1\Bigr\|}   
\newcommand{\bignorm}[1]{\bigl\lVert#1\bigr\rVert}
\newcommand{\ip}[2]{\ensuremath{\left<#1,#2\right>}}
\newcommand{\sett}[1]{\ensuremath{\left \{ #1 \right \}}}
\newcommand{\abs}[1]{\ensuremath{\left| #1 \right| }}
\newcommand{\gab}{\mathcal{G}(g,\Lambda)}
\newcommand{\ltrd}{{L^2(\Rdst)}}
\newcommand{\group}{G}
\newcommand{\stft}{V}
\newcommand{\env}{\Theta}
\newcommand{\menv}{\triangle}
\newcommand{\rel}{\mathop{\mathrm{rel}}}
\newcommand{\gap}{\rho}
\newcommand{\sep}{\mathop{\mathrm{sep}}}
\newcommand{\twistshift}{\kappa}
\newcommand{\wmes}{{W(\mathcal{M}, L^\infty)}}
\newcommand{\wtest}{{W(C_0,L^1)}}
\newcommand{\Lambdastar}{\Gamma}
\newcommand{\Step}[1]{\smallskip {\bf #1}}
\newtheorem{lemma}{Lemma}[section]
\newtheorem{theo}[lemma]{Theorem}
\newtheorem*{theo*}{Theorem}
\newtheorem{coro}[lemma]{Corollary}
\newtheorem{prop}[lemma]{Proposition}
\newtheorem{rem}[lemma]{Remark}
\newtheorem{definition}[lemma]{Definition}
\newtheorem{example}[lemma]{Example}
\title{Deformation of Gabor systems}
\author{Karlheinz Gr\"ochenig}
\address{Faculty of Mathematics \\
University of Vienna \\
Oskar-Morgenstern-Platz 1 \\
A-1090 Vienna, Austria}
\email{karlheinz.groechenig@univie.ac.at}
\author{Joaquim Ortega-Cerd\`a}
\address{Dept.\ Matem\`atica Aplicada i An\`alisi,
Universitat  de Barcelona, Gran Via 585, 08007 Bar\-ce\-lo\-na, Spain}
\email{jortega@ub.edu}
\author{Jos\'e Luis Romero}
\address{Faculty of Mathematics \\
University of Vienna \\
Oskar-Morgenstern-Platz 1 \\
A-1090 Vienna, Austria}
\email{jose.luis.romero@univie.ac.at}
\subjclass[2000]{42C15, 47B38, 81R30, 94A12}
\keywords{Gabor frame, Riesz basis, set of uniqueness, deformation,
  phase space, stability estimate, weak limit}
\thanks{K.\ G. was  supported in part by the project P26273 - N25  and
  the National Research Network S106 SISE of the
Austrian Science Fund (FWF). J.~O.-C.~ is supported by the Generalitat de
Catalunya (grant 2014SGR289) and the Spanish Ministerio de Econom\'ia y
Competividad (project MTM2011-27932-C02-01). J. L. R. gratefully acknowledges support from the project M1586-N25 of the
Austrian Science Fund (FWF) and from an individual Marie Curie fellowship, within the 7th. European Community Framework
program, under grant PIIF-GA-2012-327063.}
\begin{document}
\begin{abstract}
We introduce a new notion for the  deformation of Gabor systems. Such
deformations are in general nonlinear and, in particular, include   the standard
jitter error and linear deformations of phase space. With   this
new  notion we  prove a strong deformation result for Gabor
 frames and Gabor Riesz sequences that  covers the
known perturbation and deformation results.  Our 
proof of the deformation theorem requires a 
new characterization of Gabor frames and Gabor Riesz sequences. It is  in the style of 
Beurling's characterization of sets of sampling for bandlimited
functions and extends significantly the known  characterization of Gabor
frames ``without inequalities'' from lattices to non-uniform sets. 
\end{abstract}

\maketitle
\section{Introduction}

The question of robustness of a basis or frame is a  fundamental problem
in functional analysis and in many concrete applications. It has its
historical origin  in the work of Paley and Wiener~\cite{young80} who studied
the perturbation of Fourier  bases and was subsequently investigated
in many contexts in complex analysis and harmonic
analysis. Particularly fruitful was the study of the robustness of  structured function
systems, such as reproducing kernels,  sets of  sampling in a space of analytic
functions, wavelets, or Gabor systems. 
 In this paper we take a new look at the stability of  Gabor frames
 and Gabor Riesz sequences with respect to general deformations of
 phase space. 

To be  explicit, let us  denote the \tfs\ of a function
$g\in \lrd $ along   $z= (x,\xi )
\in \rd \times \rd \simeq \rdd $  by 
$$
\pi (z) g(t) = e^{2\pi i \xi \cdot t} g(t-x) \, .
$$
For a fixed non-zero function  $g \in \LtRd$, usually called a
``window function'',   and $\Lambda \subseteq \Rtdst$, a  Gabor system is
a structured function system of the form
\begin{align*}
\mathcal{G}(g,\Lambda)
= \sett{\pi(\lambda)g := e^{2\pi i \xi \cdot}g(\cdot-x): \lambda =
  (x,\xi) \in \Lambda}\, . 
\end{align*}
The index  set $\Lambda $ is a discrete subset of the phase space
$\rdd $ and $\lambda $ indicates the localization of a \tfs\ $\pi
(\lambda )g$ in phase space. 

The Gabor system  $\mathcal{G}(g,\Lambda)$
is called a \emph{frame} (a Gabor frame),  if
\begin{align*}
A \norm{f}_2^2 \leq \sum_{\lambda \in \Lambda} \abs{\ip{f}{\pi(\lambda)g}}^2
\leq B \norm{f}_2^2,
\qquad f \in \LtRd,
\end{align*}
for some constants $0<A\leq B < \infty$. In this case every function
$f \in \LtRd$ possesses  an expansion $f=\sum_{\lambda} c_\lambda \pi(\lambda)g$, 
for some coefficient sequence $c \in \ell^2(\Lambda)$ such that $\norm{f}_2 \asymp \norm{c}_2$.
The Gabor system $\mathcal{G}(g,\Lambda)$ is called a \emph{Riesz
  sequence} (or Riesz basis for its span), 
if $\norm{\sum_{\lambda} c_\lambda \pi(\lambda)g}_2 \asymp \norm{c}_2$ for all $c \in \ell^2(\Lambda)$.

For meaningful statements about Gabor frames it is usually assumed
that
\begin{align*}
\int_{\Rtdst} \abs{\ip{g}{\pi(z)g}} dz < \infty.
\end{align*}
This condition describes  the \modsp\ $M^1(\rd )$, also known as the Feichtinger algebra.
Every  Schwartz function satisfies this condition. 

In this paper    we study  the stability of the spanning
properties of $\mathcal{G}(g,\Lambda)$ with respect to  a set
$\Lambda \subseteq \rdd $. 
 If $\Lambda '$ is ``close enough'' to $\Lambda $, then we expect $\cG
 (g,\Lambda ')$ to possess the same spanning properties. In this
 context we distinguish perturbations and deformations.  Whereas a 
 perturbation is local and $\Lambda '$ is obtained by slightly moving
 every   $\lambda \in \Lambda $, a deformation is a global
 transformation of $\rdd $.  
The existing literature is rich in perturbation results, but not much
is known about deformations of Gabor frames. 

(a) \emph{Perturbation or jitter error:} The jitter describes  small pointwise
perturbations of $\Lambda $. For every Gabor frame $\gab $ with $g\in
M^1(\rd )$ there exists a maximal jitter $\epsilon >0$ with the
following property: if $\sup _{\lambda \in \Lambda } \inf _{\lambda '
    \in \Lambda '} |\lambda - \lambda '|<\epsilon$ and 
$\sup _{\lambda' \in \Lambda' } \inf _{\lambda
    \in \Lambda} |\lambda - \lambda '|
 < \epsilon $, then  $\cG (g, \Lambda ')$ is also a frame. 
See~\cite{fegr89,gr91} for a general result in coorbit theory, the recent
paper~\cite{FS06}, and Christensen's
book on frames~\cite{chr03} for more details and references. 

Conceptually the jitter error is easy to
understand, because the frame operator is continuous in the operator
norm with respect to the jitter error.  The proof techniques go back
to Paley and Wiener~\cite{young80} and  amount to norm
estimates for the frame operator.

(b) \emph{Linear deformations:} The fundamental deformation result is
due to Feichtinger and Kaiblinger~\cite{feka04}. Let  $g\in M^1(\rd )$, 
$\Lambda \subseteq \rdd $ be a lattice,  and assume that $\gab $ is a
frame for $\lrd $. Then there exists  $\epsilon >0$ with
the following property: if $A$ is a $2d\times 2d$-matrix with
$\|A-\mathrm{I}\| <\epsilon $ (in some given  matrix norm), then $\cG (g,
A\Lambda )$ is again a frame.  Only recently, this result was generalized to non-uniform
Gabor frames~\cite{asfeka13}. 
The proof for the case of a lattice~\cite{feka04}  was based on  the
duality  theory of Gabor frames, the proof for non-uniform Gabor
frames in~\cite{asfeka13} relies
on the stability under chirps of the Sj\"ostrand symbol class for
pseudodifferential operators, but  this  
technique does not seem to adapt to nonlinear deformations.
Compared to perturbations, (linear)   deformations of Gabor frames  are much more   difficult to
understand, because the frame operator  no longer  depends (norm-) continuously
on $\Lambda $ and a deformation 
may change the density of $\Lambda $ (which may affect significantly  the
spanning properties of $\cG (g, \Lambda ) $).

Perhaps the main difficulty is to find a suitable notion for
deformations that preserves Gabor frames. Except for linear
deformations and some  preliminary observations in ~\cite{CGN12,dG13}
this question is simply unexplored.  
In this paper we introduce a general  concept of deformation, which we call
\emph{Lipschitz} deformations. Lipschitz deformations include both  the jitter error and
linear deformations  as a special case. The precise definition is somewhat
technical and will be given  in Section~6.  For simplicity  we formulate a
representative special case of our main result.

\begin{theo}
\label{th_main_intro}
Let $g \in M^1(\Rdst)$ and  $\Lambda \subseteq \Rtdst$.
 Let $T_n: \Rtdst \to \Rtdst$ for  $n\in \bN $  be a sequence of  differentiable maps
 with Jacobian $DT_n$.    Assume that
\begin{align} \label{lipdefintro}
\sup_{z\in\Rtdst} \abs{DT_n(z)-I} \longrightarrow 0  \quad \text{ as }
n \to \infty \, . 
\end{align}

Then the following holds.
\begin{itemize}
\item[(a)] If $\mathcal{G}(g,\Lambda)$ is a frame, then
  $\mathcal{G}(g,T_n(\Lambda) )$ is a frame
for all sufficiently large $n$.
\item[(b)] If $\mathcal{G}(g,\Lambda)$ is a Riesz sequence, then
  $\mathcal{G}(g,T_n(\Lambda ))$ is a Riesz
sequence for all sufficiently large $n$.
\end{itemize}
\end{theo}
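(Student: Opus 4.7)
The proof plan is to combine a compactness/weak-limit argument with a characterization of Gabor frames and Gabor Riesz sequences ``without inequalities,'' in the spirit of Beurling's theorem for sets of sampling (this characterization is precisely the new tool announced in the abstract and needs to be established in the body of the paper for non-uniform $\Lambda$). The characterization one aims for should say: $\mathcal{G}(g,\Lambda)$ is a frame for $\lrd$ if and only if $\Lambda$ is relatively separated and every weak limit $\Gamma$ of translates $\Lambda - z_n$ (in the Hausdorff/weak sense for uniformly separated point sets in $\rdd$) is a set of uniqueness for $\cG(g,\cdot)$, i.e.\ no nonzero $f\in\lrd$ satisfies $\langle f,\pi(\gamma)g\rangle = 0$ for all $\gamma\in\Gamma$. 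The Riesz-sequence statement should be dual: $\cG(g,\Lambda)$ is a Riesz sequence iff $\Lambda$ is separated and every weak limit $\Gamma$ carries no nontrivial $\ell^2$ linear dependence among the $\pi(\gamma)g$.

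Granted this characterization, I argue by contradiction for (a). If $\cG(g,T_n(\Lambda))$ fails to be a frame along a subsequence, the negation of the characterization provides, for each $n$, a unit vector $f_n\in\lrd$ and a translation center $w_n\in\rdd$ such that $\pi(-w_n)f_n$ concentrates and $\sum_{\lambda\in\Lambda}|\langle f_n,\pi(T_n(\lambda))g\rangle|^2$ is small. Passing to a subsequence, I extract a weak limit $\Gamma\subseteq\rdd$ of the recentered sets $T_n(\Lambda) - w_n$; standard arguments for $g\in M^1$ show that $\Gamma$ fails to be a set of uniqueness. The decisive step is to identify $\Gamma$ also as a weak limit of translates of the \emph{original} $\Lambda$. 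This is where the hypothesis $\sup_z|DT_n(z)-I|\to 0$ enters: writing $T_n(z) = T_n(z_n^0) + (z-z_n^0) + R_n(z,z_n^0)$ for a suitable preimage $z_n^0$ close to $T_n^{-1}(w_n)$, the remainder $R_n$ has Lipschitz constant $\sup_z|DT_n(z)-I|$, so on every bounded window $B_R$ the set $T_n(\Lambda\cap B_R(z_n^0)) - T_n(z_n^0)$ differs from $(\Lambda\cap B_R(z_n^0))-z_n^0$ by a displacement of size $R\cdot\sup_z|DT_n(z)-I|\to 0$. A diagonal argument then shows that $\Gamma$ is simultaneously a weak limit of $\Lambda - z_n^0$. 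But applying the characterization to the given frame $\cG(g,\Lambda)$, every such weak limit must be a set of uniqueness, a contradiction. Part (b) runs identically, substituting ``uniqueness set'' by ``$\ell^2$-independent set'' and using the dual characterization.

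The principal obstacle is establishing the Beurling-type characterization for non-uniform $\Lambda$: unlike the lattice case, there is no Janssen/duality machinery to appeal to, and one must instead develop quantitative estimates linking frame/Riesz bounds to properties of weak limits of point sets, together with appropriate covering and relative separation controls. A secondary but genuine difficulty is the nonlinearity of $T_n$: since $T_n$ can change the global density of $\Lambda$ arbitrarily, norm continuity of the frame operator is unavailable, and the whole argument must be routed through local, windowed comparisons of $T_n(\Lambda)$ with $\Lambda$, made possible only by the uniform control on $DT_n - I$.
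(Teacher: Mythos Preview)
Your overall strategy is essentially the paper's: reduce to a Beurling-type characterization of Gabor frames/Riesz sequences via weak limits, argue by contradiction, extract a weak limit $\Gamma$ of the recentered deformed sets, and use the Jacobian hypothesis to show that $\Gamma$ is also a weak limit of translates of the original $\Lambda$. Your handling of this last step --- the first-order expansion $T_n(z) = T_n(z_n^0) + (z-z_n^0) + R_n$ with $R_n$ Lipschitz-small on bounded windows --- is correct and corresponds to the paper's Lemma~\ref{lemma_reg_conv} (the Jacobian condition implies an abstract ``Lipschitz convergence'' property) combined with Lemma~\ref{lemma_reg_key} (weak limits of translates of Lipschitz-deformed sets lie in $W(\Lambda)$).

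There is, however, a genuine gap in the function space you choose for the characterization. You phrase uniqueness in $L^2$ and linear dependence in $\ell^2$; the paper phrases them in $M^\infty=(M^1)^*$ and $\ell^\infty$, and this is not cosmetic. The compactness step fails over $L^2$: negating the $L^2$ frame inequality gives $f_n\in L^2$ with $\|f_n\|_2=1$ and small $\ell^2$-coefficients, but this does \emph{not} let you select $w_n$ with $|V_g f_n(w_n)|\geq c>0$, because $\|V_g f_n\|_\infty$ can tend to zero even while $\|f_n\|_2=1$ (e.g.\ high-order Hermite functions against a Gaussian window). Without such a $w_n$ the weak limit of $\pi(-w_n)f_n$ may well be zero and the contradiction evaporates. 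The Riesz side has the identical defect: $\|c^n\|_2=1$ does not yield a coordinate with $|c^n_{\lambda_n}|\geq c$, so Lemma~\ref{lemma_zero_seq} (which produces the nontrivial dependence on a weak limit) cannot be fed.

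The paper repairs this by working in $M^\infty$ from the start. The characterization (Theorem~\ref{th_char_frame}(vi)) asks that $C_{g,\Gamma}$ be one-to-one on $M^\infty$, and the contradiction argument negates the $\infty$-frame inequality, so $\|V_g f_n\|_\infty=1$ automatically supplies the recentering point. The bridge between the $L^2$ frame property you are given and the $M^\infty$ frame property the argument needs is itself nontrivial: it is a Sj\"ostrand-type Wiener lemma showing that a set of time-frequency molecules which is a $p$-frame for one $p\in[1,\infty]$ is a $p$-frame for all $p$ (Theorem~\ref{th_main_mol}/Corollary~\ref{coro_main_mol}). This is precisely the ``machinery of localized frames and the stability of operators'' alluded to in the introduction, and without it the $L^2$ route you sketch does not close.
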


We would like to  emphasize that Theorem ~\ref{th_main_intro} is quite
 general. It deals with \emph{non-uniform} Gabor frames (not just
lattices)  under  \emph{nonlinear} deformations.  In particular,  Theorem~\ref{th_main_intro} implies
the main results of~\cite{feka04, asfeka13}.  The counterpart for
deformations of  Gabor Riesz sequences (item (b))   is
new even for linear deformations.

Condition~\eqref{lipdefintro} roughly states   that the    mutual 
distances between the points of $\Lambda$  are 
preserved locally under the deformation $T_n$. Our  main insight
was that the frame property of a deformed Gabor system $\cG (g,
T_n(\Lambda ))$ does not depend so much on the position or velocity of the
sequences $(T_n(\lambda ))_{ n\in \bN }$ for $\lambda \in \Lambda $, but on the relative distances
$|T_n(\lambda ) - T_n (\lambda ') |$ for $\lambda ,\lambda ' \in \Lambda
$. For an illustration see Example~\ref{ex_go_wrong}. 

As an application of Theorem~\ref{th_main_intro}, we derive a   \emph{non-uniform
Balian-Low theorem} (BLT). For this, we  recall that  
the  lower  Beurling density of a set  $\Lambda \subseteq \rdd $ is given by 
$$
D^-(\Lambda ) = \lim _{R\to \infty } \min _{z\in \rdd } \frac{\#
 \,  \Lambda \cap B_R(z)}{\mathrm{vol}(B_R(0))} \, ,
$$ 
and likewise the upper Beurling density  $D^+(\Lambda )$ (where the
minimum is  replaced by a supremum). The fundamental density theorem of Ramanathan and Steger~\cite{RS95}
asserts   that if  $\gab $ is a frame then  $D^-(\Lambda )\geq
1$. Analogously, if $\gab $ is a Riesz sequence, then $D^+(\Lambda ) \leq
1$~\cite{bacahela06-1}.  
The so-called  Balian-Low theorem (BLT)   is a stronger version
of the density theorem and asserts that for ``nice'' windows $g$  the 
inequalities in the density theorem are strict.    For the case when
$g\in M^1(\rd )$ and $\Lambda $ is a lattice,
the Balian-Low theorem is a consequence of ~\cite{feka04}. A  Balian-Low theorem  for non-uniform
Gabor frames  was open for a long time and was   proved only recently by  Ascensi, Feichtinger, and
Kaiblinger~\cite{asfeka13}. The corresponding statement for Gabor
Riesz sequences was open and is settled  here as an application of our
deformation theorem.   
We refer to Heil's detailed survey~\cite{heil07} of the  numerous
contributions to the density theorem for Gabor frames
after~\cite{RS95} and to ~\cite{CP06} for the Balian-Low theorem. 

As an immediate  consequence of Theorem~\ref{th_main_intro} we obtain the   following version
of the Balian-Low theorem  for non-uniform Gabor systems.  

\begin{coro}[Non-uniform Balian-Low Theorem]
Assume that $g\in M^1(\rd )$. 

(a) If  $\gab $ is a frame for $\lrd $,  then $D^-(\Lambda )>1$. 

(b) If  $\gab $ is a Riesz sequence in  $\lrd $,  then $D^+(\Lambda )<1$. 
\end{coro}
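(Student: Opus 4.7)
The plan is to derive both statements by contradiction from Theorem~\ref{th_main_intro}, using a one-parameter family of pure dilations of phase space as the deformation. For part (a), assume $\gab$ is a frame; the Ramanathan--Steger density theorem already gives $D^-(\Lambda)\ge 1$, so the task is to rule out equality. Suppose for contradiction that $D^-(\Lambda)=1$, and set $T_n(z) := (1+\tfrac{1}{n})\,z$ on $\rdd$. These maps are linear, hence differentiable, with $DT_n(z)-I = \tfrac{1}{n}I$, so $\sup_{z\in\rdd}|DT_n(z)-I| = \tfrac{1}{n}\to 0$, and condition~\eqref{lipdefintro} is trivially satisfied. Theorem~\ref{th_main_intro}(a) then yields that $\cG(g,T_n(\Lambda))$ is a frame for all sufficiently large $n$. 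On the other hand, the scaling law $D^-(c\Lambda)=c^{-2d}D^-(\Lambda)$ (which follows at once from $B_R(z)=c\,B_{R/c}(z/c)$ in the defining limit) gives
\begin{equation*}
D^-\bigl(T_n(\Lambda)\bigr) \;=\; (1+\tfrac{1}{n})^{-2d}\,D^-(\Lambda) \;=\; (1+\tfrac{1}{n})^{-2d} \;<\; 1,
\end{equation*}
contradicting Ramanathan--Steger applied to $T_n(\Lambda)$.

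Part (b) is the mirror image. Assume $\gab$ is a Riesz sequence; by the Riesz-sequence density theorem of~\cite{bacahela06-1} one has $D^+(\Lambda)\le 1$, so assume for contradiction $D^+(\Lambda)=1$ and apply the contractions $T_n(z) := (1-\tfrac{1}{n})\,z$. Again \eqref{lipdefintro} holds trivially, Theorem~\ref{th_main_intro}(b) ensures that $\cG(g,T_n(\Lambda))$ is a Riesz sequence for all large $n$, and the same scaling law now yields $D^+(T_n(\Lambda))=(1-\tfrac{1}{n})^{-2d}>1$, contradicting the upper density bound for Gabor Riesz sequences.

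There is essentially no obstacle here: the heavy lifting is performed by Theorem~\ref{th_main_intro}, which guarantees openness of the frame/Riesz-sequence property under arbitrarily small Lipschitz dilations of phase space. The only two ingredients that must be verified are the elementary scaling identity $D^\pm(c\Lambda)=c^{-2d}D^\pm(\Lambda)$ and that pure dilations meet the Jacobian condition \eqref{lipdefintro}, both of which are immediate. The force of the corollary is thus precisely that Theorem~\ref{th_main_intro} admits deformations that \emph{change} the density of $\Lambda$, so even the critical case $D^\pm(\Lambda)=1$ cannot survive the deformation while remaining a frame or Riesz sequence.
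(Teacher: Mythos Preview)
Your proof is correct and follows essentially the same approach as the paper: argue by contradiction, dilate $\Lambda$ by a scalar tending to $1$ to push the critical density strictly past the threshold, invoke Theorem~\ref{th_main_intro} to preserve the frame/Riesz property, and contradict the density theorem. The paper in fact only writes out part (b) (with the expansion $T_n z=\alpha_n z$, $\alpha_n\to 1$) and declares (a) similar; your version spells out both parts and states the scaling law $D^\pm(c\Lambda)=c^{-2d}D^\pm(\Lambda)$ explicitly, which is a minor improvement in clarity.
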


\begin{proof}
  We only prove the new statement (b), part (a) is similar~\cite{asfeka13}. Assume
  $\gab $ is a Riesz sequence, but that  $D^+(\Lambda ) =1$. Let
  $\alpha _n >1$ such that $\lim _{n\to \infty } \alpha _n  = 1$ and set
  $T_n z = \alpha _n z$.  Then the
  sequence $T_n$ satisfies condition \eqref{lipdefintro}. On the one
  hand, we have 
  $D^+(\alpha_n\Lambda) = \alpha _n ^{2d} >1$, and on the other hand,
  Theorem~\ref{th_main_intro} implies that $\cG (g,\alpha _n \Lambda
  )$ is a Riesz sequence for $n$ large enough. This is a contradiction
  to the density theorem, and thus the assumption $D^+(\Lambda ) =1$
  cannot hold. 
\end{proof}

\vs

The proof of  Theorem \ref{th_main_intro} does not come easily and is
technical. It 
combines methods from the theory of localized frames~\cite{fogr05,gr04}, the
stability of operators on $\ell ^p$-spaces~\cite{albakr08,sj95} and weak limit
techniques in the style of  Beurling~\cite{be89}.   We say that
$\Gamma \subseteq \rdd $ is a weak limit of translates of $\Lambda \subseteq 
\rdd$, if there exists a sequence  $(z_n)_{n\in \bN } \subseteq \rdd $,
such that $\Lambda+z_n\to \Gamma$ uniformly on compact sets. See Section~4 
for the precise definition and more details  on weak limits. 
 
We will prove the following characterization of non-uniform Gabor
frames ``without inequalities''.

\begin{theo} \label{th-char-frame}
Assume that $g\in M^1(\rd )  $ and $\Lambda \subseteq \rdd$. 
Then  $\gab $ is a frame for $\lrd $, \fif\ for every weak limit
$\Gamma $ of $\Lambda $ the map $f\to \big( \langle f, \pi (\gamma
)g\rangle \big) _{\gamma \in \Gamma }$ is one-to-one on $(M^1(\rd
))^*$. 
\end{theo}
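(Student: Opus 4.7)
My plan runs through the Fornasier-Gröchenig equivalence for intrinsically localized frames: since $g\in M^1(\rd)$, the Gabor system $\cG(g,\Lambda)$ is a frame for $\lrd$ if and only if it is a Banach frame for the modulation space $M^\infty=(M^1(\rd))^*$, meaning that there exist $A,B>0$ with
\begin{equation*}
A\|F\|_{M^\infty}\le\sup_{\lambda\in\Lambda}|\langle F,\pi(\lambda)g\rangle|\le B\|F\|_{M^\infty},\qquad F\in M^\infty.
\end{equation*}
I combine this with weak-$*$ sequential compactness in $M^\infty$ (as $M^1$ is separable), compactness of sequences of uniformly relatively separated subsets of $\rdd$ under set-theoretic weak convergence, and uniform continuity of $\mu\mapsto\langle F,\pi(\mu)g\rangle$ with modulus depending only on $g$, scaled by $\|F\|_{M^\infty}$.

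\emph{Uniqueness on weak limits $\Rightarrow$ frame.} I argue contrapositively. If $\cG(g,\Lambda)$ is not a frame, the equivalence furnishes $F_n\in M^\infty$ with $\|F_n\|_{M^\infty}=1$ and $\sup_{\lambda\in\Lambda}|\langle F_n,\pi(\lambda)g\rangle|\to 0$. Pick $z_n$ with $|\langle F_n,\pi(z_n)g\rangle|\ge 1/2$ and set $\tilde F_n:=\pi(-z_n)F_n$, so that, up to a unimodular phase, $|\langle\tilde F_n,g\rangle|\ge 1/2$, $\|\tilde F_n\|_{M^\infty}=1$, and $\sup_{\nu\in\Lambda-z_n}|\langle\tilde F_n,\pi(\nu)g\rangle|\to 0$. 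After extracting a subsequence, $\tilde F_n\xrightarrow{w^*}F$ in $M^\infty$ and $\Lambda-z_n\weakconv\Gamma$ for some weak limit $\Gamma$ of $\Lambda$. Testing pointwise at $0$ gives $F\ne 0$. For any $\gamma\in\Gamma$, choose $\nu_n=\lambda_n-z_n\to\gamma$ with $\lambda_n\in\Lambda$; then $|\langle\tilde F_n,\pi(\nu_n)g\rangle|\to 0$ by construction, while uniform continuity of the STFT yields $|\langle\tilde F_n,\pi(\gamma)g\rangle-\langle\tilde F_n,\pi(\nu_n)g\rangle|\to 0$. The weak-$*$ limit then identifies $\langle F,\pi(\gamma)g\rangle=0$ for every $\gamma\in\Gamma$, so $F\in M^\infty\setminus\{0\}$ violates injectivity on $\Gamma$.

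\emph{Frame $\Rightarrow$ uniqueness on every weak limit.} Assume $\cG(g,\Lambda)$ is a frame with bounds $A,B$, and let $\Lambda+z_n\weakconv\Gamma$. Each $\cG(g,\Lambda+z_n)$ is a frame with the same constants. For $f\in M^1\subset\lrd$, the STFT $\mu\mapsto\langle f,\pi(\mu)g\rangle$ lies in $M^1\subset W(C_0,L^1)$ and hence decays in a Wiener-amalgam sense; together with the uniform relative separation of all $\Lambda+z_n$, this gives a uniform tail bound $\sum_{\lambda\in(\Lambda+z_n)\setminus B_R}|\langle f,\pi(\lambda)g\rangle|^2\to 0$ as $R\to\infty$, while the finite sum over $(\Lambda+z_n)\cap B_R$ converges to the corresponding sum over $\Gamma\cap B_R$ by set convergence and STFT continuity. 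Passing to the limit in the frame inequality and invoking density of $M^1$ in $\lrd$, $\cG(g,\Gamma)$ is itself a frame. A second appeal to the Fornasier-Gröchenig equivalence yields $M^\infty$-norming for $\Gamma$, and then $\langle F,\pi(\gamma)g\rangle=0$ for all $\gamma\in\Gamma$ forces $\|F\|_{M^\infty}=0$.

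The principal obstacle is this transfer of the frame property to a weak limit. For a generic $F\in M^\infty$ the STFT need not decay at infinity, so attempting the transfer directly at the $M^\infty$-level fails: uniform frame inequalities for $\Lambda+z_n$ do not immediately produce a norming inequality for $\Gamma$. The resolution is to do the transfer at the $\lrd$-level, where $f\in M^1$ guarantees the off-diagonal decay $\langle f,\pi(\cdot)g\rangle\in W(C_0,L^1)$ required for uniform tail control, and then to re-enter $M^\infty$ through the localization correspondence.
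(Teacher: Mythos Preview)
Your implication ``uniqueness on weak limits $\Rightarrow$ frame'' coincides with the paper's (vi)$\Rightarrow$(iii) in Theorem~\ref{th_char_frame}: the same contrapositive extraction via weak-$*$ compactness in $M^\infty$ and uniform convergence of the short-time Fourier transform on compacta.

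For the converse your route differs from the paper's. The paper dualizes to the synthesis side: it shows that $C^*_{g,\Lambda}:\ell^1(\Lambda)\to M^1(\rd)$ is onto (condition (iv)), and then uses weak-$*$ compactness of \emph{measures} in $\sigma(\mathcal{M},C_0)$ to transfer this surjectivity to $C^*_{g,\Gamma}$ for every weak limit $\Gamma$; duality then gives that $C_{g,\Gamma}$ is bounded below on $M^\infty$. You instead transfer the $L^2$ frame inequality for $\cG(g,\Lambda+z_n)$ directly to $\cG(g,\Gamma)$ by passing to the limit for test functions $f\in M^1$ (where $V_gf\in W(C_0,L^1)$ controls the tails), invoke density, and then re-enter $M^\infty$ through the localization equivalence. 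Both arguments rest on the same $p$-independence principle (the paper's Corollary~\ref{coro_main_mol}, your Fornasier--Gr\"ochenig citation); neither is more elementary. The paper's measure-compactness route handles bookkeeping more uniformly, while yours is closer in spirit to Beurling's original sampling transfer.

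There is one genuine imprecision in your limit step. You assert that the finite sum over $(\Lambda+z_n)\cap B_R$ converges to the corresponding sum over $\Gamma\cap B_R$. Weak convergence of sets in the Hausdorff sense ignores multiplicities, and a Gabor frame index set $\Lambda$ is only guaranteed to be \emph{relatively} separated, not separated (Lemma~\ref{lemma_set_must_be}); distinct points of $\Lambda+z_n$ may therefore coalesce to a single $\gamma\in\Gamma$ (the paper flags exactly this phenomenon after Lemma~\ref{lemma_compactness}). The fix is immediate: at most $M:=\rel(\Lambda)$ points of $\Lambda+z_n$ lie in any unit ball, so each $\gamma$ absorbs at most $M$ nearby points, and one obtains
\[
\limsup_n\sum_{\lambda\in(\Lambda+z_n)\cap B_R}|V_gf(\lambda)|^2\le M\sum_{\gamma\in\Gamma\cap\overline{B_R}}|V_gf(\gamma)|^2.
\]
Combined with your tail estimate this gives a lower frame bound $A/M$ for $\cG(g,\Gamma)$, and the rest of your argument goes through unchanged.
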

The full statement with five equivalent conditions characterizing a
non-uniform Gabor frame will be given in Section~5,
Theorem~\ref{th_char_frame}. An analogous characterization of Gabor
Riesz sequences with weak limits is stated in
Theorem~\ref{th_char_riesz}. 

For the special case when  $\Lambda $ is a lattice, the above
characterization of Gabor frames without inequalities was already
proved in~\cite{gr07-2}. In the lattice case, the Gabor system $\gab $
possesses additional invariance properties that facilitate the
application of methods from operator algebras.  The
generalization of \cite{gr07-2} to non-uniform Gabor systems was
rather surprising for us and demands completely different methods. 

To make Theorem~\ref{th-char-frame} more plausible,
we make the analogy with Beurling's results on balayage in
Paley-Wiener space. Beurling~\cite{be89} characterized  the stability
of sampling  in the Paley-Wiener space of bandlimited functions $\{
f\in L^\infty (\rd ): \supp \hat{f} \subseteq S\}$ for a compact
spectrum $S\subseteq \rd $ in terms of sets of uniqueness for this
space. It is well-known that the frame property of a Gabor system
$\cG (g,\Lambda )  $ is equivalent to a sampling theorem for an associated
transform. Precisely, let $z\in \rdd \to V_gf(z) = \langle f, \pi
(z)g\rangle $ be the short-time Fourier transform,   for fixed non-zero $g\in
M^1(\rd )$ and $f\in (M^1(\rd ))^*$. Then  $\gab $ is a frame,
\fif\  $\Lambda $ is a set of sampling for the short-time \ft\ on
$(M^1)^*$. In this light, Theorem~\ref{th-char-frame} is the precise
analog of Beurling's theorem for bandlimited functions. 

One may therefore try to adapt  Beurling's methods
to Gabor frames and the sampling of short-time Fourier
transforms. Beurling's ideas have been used for many sampling problems
in complex analysis following the pioneering work of Seip on the Fock space 
\cite{Seip92a},\cite{Seip92b} and the Bergman space \cite{Seip}, see also
\cite{BMO} for a survey. A remarkable fact in
Theorem~\ref{th-char-frame}  is the absence of a complex structure (except 
when $g$ is a Gaussian). This explains why we have to use the
machinery of localized frames and the stability of operators in our
proof. We mention that Beurling's ideas  have been transferred to  a
few other contexts outside complex analysis, such as  sampling
theorems  with spherical harmonics in the sphere~\cite{MarOrt},  or,  more generally,
with eigenvectors of the Laplace operator in Riemannian
manifolds~\cite{OrtPri}. 

This article is organized as follows: 
In Section~2 we collect the main definitions from \tfa . In Section~3
we discuss \tf\ molecules and their $\ell ^p$-stability. Section~4 is
devoted to the details of Beurling's notion of weak convergence of
sets. In Section~5 we state and prove the full characterization of
non-uniform Gabor frames and Riesz sequences without inequalities. In
Section~6 we introduce the general concept of a Lipschitz deformation
of a set and prove the main properties. Finally, in Section~7 we state
and prove the main result of this paper, the general deformation
result. The appendix provides the technical proof of the stability of
\tf\ molecules.

\section{Preliminaries}
\subsection{General notation}
Throughout the article, $\abs{x} := (\abs{x_1}^2+\ldots+\abs{x_d}^2)^{1/2}$ denotes the Euclidean norm,
and $B_r(x)$ denotes the Euclidean ball. Given two functions $f,g:X \to [0,\infty)$, we say that
$f \lesssim g$ if there exists a constant $C>0$ such that $f(x) \leq C g(x)$, for all $x \in X$. We say that
$f \asymp g$ if $f \lesssim g$ and $g \lesssim f$.

\subsection{Sets of points}
A set $\Lambda \subseteq \Rdst$ is called \emph{relatively separated} if
\begin{align}
\label{eq_rel}
\rel(\Lambda) := \sup \{ \#(\Lambda \cap B_1(x)) : x \in \Rdst \} < \infty.
\end{align}
It is called \emph{separated} if
\begin{align}
\label{eq_sep}
\sep(\Lambda) := \inf \sett{\abs{\lambda-\lambda'}: \lambda \not = \lambda' \in \Lambda} > 0.
\end{align}
We say that $\Lambda$ is $\delta$-separated if $\sep(\Lambda) \geq \delta$.
A separated set is relatively separated and
\begin{align}
\label{eq_sep_relsep}
\rel(\Lambda) \lesssim \sep(\Lambda)^{-d}, \qquad \Lambda \subseteq \Rdst.
\end{align}
Relatively separated sets are finite unions of separated sets.

The \emph{hole} of a set $\Lambda \subseteq \Rdst$ is defined as
\begin{align}
\label{eq_gap}
\rho(\Lambda) := \sup_{x\in\Rdst} \inf_{\lambda \in \Lambda} \abs{x-\lambda}.
\end{align}
A sequence $\Lambda$ is called \emph{relatively dense} if $\rho(\Lambda) < 
\infty$. Equivalently, $\Lambda$ is relatively
dense if there exists $R>0$ such that
\begin{align*}
\Rdst = \bigcup_{\lambda \in \Lambda} B_R(\lambda).
\end{align*}
In terms of the Beurling densities defined in the introduction, a set $\Lambda$ is relatively separated
if and only if $D^{+}(\Lambda)<\infty$ and it is relatively dense if and only if $D^{-}(\Lambda)>0$.

\subsection{Amalgam spaces}
\label{sec_am}
The \emph{amalgam space} $W(L^\infty,L^1)(\Rdst)$ consists of all functions $f \in L^\infty(\Rdst)$
such that
\begin{align*}
\norm{f}_{W(L^\infty,L^1)} := \int_{\Rdst} \norm{f}_{L^\infty(B_1(x))} dx
\asymp
\sum_{k \in \Zdst} \norm{f}_{L^\infty([0,1]^d+k)} <\infty.
\end{align*}
The space $C_0(\Rdst)$ consists of all continuous functions $f: \Rdst \to \bC$
such that $\lim_{x \longrightarrow \infty} f(x) = 0$, consequently 
the (closed) subspace of $W(L^\infty,L^1)(\Rdst)$ consisting of continuous functions is 
$W(C_0,L^1)(\Rdst)$. 
 This space will be used as a convenient
collection of test functions.

We will repeatedly use the following sampling inequality:
 \emph{Assume that $f\in W(C_0,L^1)(\Rdst)$ and $\Lambda \subseteq
 \rd $ is relatively separated, then }
\begin{equation}
  \label{eq:c7}
  \sum _{\lambda \in \Lambda } |f(\lambda )| \lesssim \rel
  (\Lambda ) \, \| f\|_{W(L^\infty , L^1)}\, .
\end{equation}

The dual space of $W(C_0,L^1)(\Rdst)$ will be denoted $\wmes(\Rdst)$. It consists of all the complex-valued Borel
measures $\mu: \mathcal{B}(\Rdst) \to \bC$ such that
\begin{align*}
\norm{\mu}_\wmes := \sup_{x \in \Rdst} \norm{\mu}_{B_1(x)} 
= \sup_{x \in \Rdst} \abs{\mu}(B_1(x)) < \infty.
\end{align*}
For the general theory of  Wiener amalgam spaces we refer to  \cite{fe83}. 

\subsection{Time-frequency analysis}
The \emph{time-frequency shifts} of a function $f: \Rdst \to \bC$ are
\begin{align*}
\pi(z)f(t) := e^{2\pi i \xi t} f(t-x), \qquad z=(x,\xi) \in \Rdst\times \Rdst,
t \in \Rdst.
\end{align*}
These operators satisfy the commutation relations
\begin{align}
\label{eq_comp_tf}
\pi(x,\xi) \pi(x',\xi') = e^{-2\pi i \xi' x} \pi(x+x', \xi+\xi'),
\qquad (x,\xi), (x',\xi') \in \Rdst \times \Rdst.
\end{align}
Given a non-zero Schwartz function $g \in \mathcal{S}(\Rdst)$, the \emph{short-time Fourier transform} of a 
distribution $f \in \mathcal{S}'(\Rdst)$ with respect to the window $g$ is defined as
\begin{align}
\label{eq_def_stft}
V_g f(z) := \ip{f}{\pi(z) g}, \qquad z \in \Rtdst.
\end{align}
For $\norm{g}_2=1$ the short-time Fourier transform  is an isometry: 
\begin{align}
\label{eq_stft_l2}
\norm{V_g f}_{\LtRtd}=\norm{f}_{\LtRd},
\qquad f \in \LtRd.
\end{align}
The commutation rule \eqref{eq_comp_tf} implies the covariance
property of  the short-time Fourier transform: 
\begin{align*}
V_g (\pi(x,\xi) f)(x',\xi') = e^{-2\pi i x(\xi '-\xi )} V_g f(x'-x,\xi'-\xi),
\qquad (x,\xi), (x',\xi') \in \Rdst \times \Rdst.
\end{align*}
In particular,
\begin{align}
\label{eq_tf_stft}
\abs{V_g \pi(z) f} = \abs{V_g f(\cdot-z)},
\qquad z \in \Rtdst.
\end{align}

We then define the \emph{modulation spaces} as follows: fix a non-zero
$g\in \cS (\rd )$ and let 
\begin{align}
\label{eq_def_mp}
M^p(\Rdst) := \set{f \in \mathcal{S}'(\Rdst)}{V_g f \in L^p(\Rtdst)},
\qquad 1 \leq p \leq \infty,
\end{align}
endowed with  the norm $\norm{f}_{M^p} := \norm{V_g f}_{L^p}$. Different choices of non-zero windows
$g \in \mathcal{S}(\Rdst)$ yield the same space with equivalent norms, see 
\cite{fe89}. We note that for $g\in M^1(\rd )$ and $f\in M^p(\rd )$,
$1\leq p \leq \infty $, the short-time Fourier transform  $V_gf$ is a continuous function, we
may therefore argue safely with the pointwise values of $V_gf$. 

The space $M^1(\Rdst)$, known as the Feichtinger algebra, plays a central role.
It can also  be characterized as
\begin{align*}
M^1(\Rdst) := \set{f \in L^2(\Rdst)}{V_f f \in L^1(\Rtdst)}.
\end{align*}
 The  \modsp\  $\modzero(\Rdst)$ is defined  as the closure of the
 Schwartz-class with respect to the norm $\| \cdot \|_{M^\infty
 }$. Then $M^0(\rd )$ is a closed subspace of   $M^\infty(\Rdst)$ and
 can also be characterized as
\begin{align*}
\modzero(\Rdst) = \set{f \in M^\infty(\Rdst)}{V_g f \in C_0(\Rtdst)}.
\end{align*}
The duality of \modsp s is  similar to  sequence spaces; we have 
$M^0(\rd )^* = M^1(\rd )$ and $M^1(\rd )^* = M^\infty (\rd )$ with
respect to the duality $\langle f, h \rangle := \langle V_g f , V_g h
\rangle $. 

In this article we consider a fixed function $g \in M^1(\Rdst)$ and will be mostly concerned with $M^1(\Rdst)$, its
dual space $M^\infty(\Rdst)$, and $M^2(\Rdst)=L^2(\Rdst)$. The weak* topology in $M^\infty(\Rdst)$ will be denoted by
$\sigma(M^\infty,M^1)$ and  the weak* topology on $M^1(\Rdst)$ by
$\sigma(M^1,\modzero)$. Hence, a sequence $\sett{f_k:k \geq 1}
\subseteq M^\infty(\Rdst)$ converges to $f \in 
M^\infty(\Rdst)$ in $\sigma(M^\infty,M^1)$ if and only if for every $h \in M^1(\Rdst)$:
$\ip{f_k}{h} \longrightarrow \ip{f}{h}$.

We mention the following facts that will be used repeatedly
(see for example \cite[Theorem 4.1]{fegr89} and \cite[Proposition 12.1.11]{gr01}).
\begin{lemma}
\label{lemma_stft}
Let $g \in M^1(\Rdst)$ be nonzero. Then the following hold true.
\begin{itemize}
\item[(a)] If $f \in M^1(\Rdst)$, then $V_g f \in W(C_0,L^1)(\Rtdst)$.
\item[(b)] Let $\sett{f_k:k \geq 1} \subseteq M^\infty(\Rdst)$ be a bounded sequence
and $f \in M^\infty(\Rdst)$. Then
$f_k \longrightarrow f$ in $\sigma(M^\infty,M^1)$
if and only if
$V_g f_k \longrightarrow V_g f$ uniformly on compact sets.
\item[(c)] Let $\sett{f_k:k \geq 1} \subseteq M^1(\Rdst)$ be a bounded sequence
and $f \in M^1(\Rdst)$. Then $f_k \longrightarrow f$ in $\sigma(M^1,\modzero)$
if and only if $V_g f_k \longrightarrow V_g f$ uniformly on compact sets.
\end{itemize}
\end{lemma}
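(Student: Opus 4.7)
The plan is to reduce all three parts to the reproducing formula for the short-time Fourier transform together with the covariance identity \eqref{eq_tf_stft}, and to exploit the duality pairings $\ip{f}{h} = \ip{V_g f}{V_g h}$ that isometrically identify each $M^p(\Rdst)$ with a closed subspace of $L^p(\Rtdst)$.

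For part (a), I would normalize $\norm{g}_2=1$ and apply the reproducing formula $V_g f(z) = \ip{V_g f}{V_g \pi(z)g}$. Combined with \eqref{eq_tf_stft}, this yields the pointwise domination $\abs{V_g f(z)} \leq (\abs{V_g f} * \abs{V_g g}^{\vee})(z)$, where $h^{\vee}(w)=h(-w)$. Since $g \in M^1(\Rdst)$, the function $\abs{V_g g}$ lies in $L^1(\Rtdst)$, and one application of the same reproducing estimate with $f=g$ upgrades it to $W(C_0,L^1)(\Rtdst)$ (using that $V_g g$ is already continuous). Because $V_g f \in L^1(\Rtdst)$ by definition of $M^1$, the amalgam convolution inclusion $L^1 * W(C_0,L^1) \subseteq W(C_0,L^1)$, proved by a direct Fubini computation, then delivers $V_g f \in W(C_0,L^1)(\Rtdst)$; continuity of $V_g f$ itself is automatic from the strong continuity of $z \mapsto \pi(z)g$ in $L^2(\Rdst)$.

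For (b), the forward direction is immediate from testing against $h=\pi(z)g \in M^1(\Rdst)$ at each fixed $z$, which gives pointwise convergence $V_g f_k(z) \to V_g f(z)$. To upgrade pointwise convergence of a bounded sequence to uniform convergence on compacts I would invoke Arzel\`a--Ascoli; the required uniform-in-$k$ equicontinuity on compact sets follows from
$$\abs{V_g f_k(z) - V_g f_k(z')} \leq \norm{f_k}_{M^\infty}\, \norm{\pi(z)g - \pi(z')g}_{M^1}$$
together with the continuity of $z \mapsto \pi(z)g$ from $\Rtdst$ into $M^1(\Rdst)$. Conversely, assuming $V_g f_k \to V_g f$ uniformly on compacts and $\sup_k \norm{f_k}_{M^\infty} < \infty$, I would write $\ip{f_k}{h} = \int V_g f_k\cdot \overline{V_g h}\, dz$, split the integral into a large ball and its complement, and observe that uniform convergence handles the ball while the $L^1$-tail of $V_g h \in W(C_0,L^1)$ (from part (a)) is controlled uniformly in $k$ by the bound $\sup_k \norm{V_g f_k}_\infty < \infty$.

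Part (c) is structurally parallel once the duality $(M^\infty,M^1)$ is replaced by $(M^1,\modzero)$: the forward direction uses $\pi(z)g \in \modzero(\Rdst)$ (valid because $V_g\pi(z)g$ is a translate of $V_g g \in W(C_0,L^1) \subseteq C_0$) to obtain pointwise convergence of $V_g f_k$, and the same equicontinuity argument promotes this to uniform convergence on compacts; the converse pairs the uniformly $L^1$-bounded $V_g f_k$ against $V_g h \in C_0(\Rtdst)$, so the tail of $\int V_g f_k\cdot \overline{V_g h}\, dz$ is dominated by $\sup_{\abs{w}>R}\abs{V_g h(w)} \cdot \sup_k \norm{V_g f_k}_{L^1} \to 0$. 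I expect the main technical obstacle to be part (a) itself, specifically securing the amalgam regularity $W(C_0,L^1)$ rather than the mere $L^1$-membership, since it is precisely this refinement that powers the truncation step in both (b) and (c); once it is in hand, the rest is a routine splitting of compact core and tail.
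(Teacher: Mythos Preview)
The paper does not actually prove this lemma; it simply cites \cite[Theorem 4.1]{fegr89} and \cite[Proposition 12.1.11]{gr01} and moves on. Your outline follows the standard route found in those references and is essentially correct, so there is nothing to compare at the level of strategy.

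There is, however, one small circularity in your treatment of part~(a). You claim that applying the reproducing estimate with $f=g$ upgrades $V_g g$ from $L^1$ to $W(C_0,L^1)$. But the inequality $|V_g g| \leq |V_g g|*|V_g g|^\vee$ only tells you that $|V_g g|$ is dominated by a function in $L^1*L^1 \subseteq L^1$ (and in $L^1*L^\infty \subseteq L^\infty$); this does not by itself give $W(L^\infty,L^1)$ membership, since $L^1 \cap L^\infty \not\subseteq W(L^\infty,L^1)$, and the obvious attempt to sum local suprema leads straight back to $\norm{V_g g}_{W(L^\infty,L^1)}$ on the right-hand side. The standard repair is to introduce an auxiliary Schwartz window $\varphi$: then $V_\varphi \varphi \in \mathcal{S}(\Rtdst) \subseteq W(C_0,L^1)$, so the reproducing estimate gives $|V_\varphi g| \leq |V_\varphi g|*|V_\varphi \varphi|^\vee \in L^1 * W(L^\infty,L^1) \subseteq W(L^\infty,L^1)$; one more change-of-window estimate $|V_g f| \lesssim |V_\varphi f|*|V_\varphi g|^\vee$ then handles general $f \in M^1$. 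With this fix in place, your amalgam convolution inclusion is legitimate, and parts~(b) and~(c) go through exactly as you describe.
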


In particular, if $f_n \to f $ in $\sigma (M^\infty , M^1)$ and $z_n
\to z \in \rdd $, then $V_gf_n (z_n) \to V_gf(z)$. 

\subsection{Analysis and synthesis maps}
\label{sec_maps}
Given $g \in M^1(\Rdst)$ and a relatively separated set $\Lambda \subseteq 
\Rtdst$, consider the
\emph{analysis operator}  and the  \emph{synthesis operator} that are  formally defined as
\begin{align*}
&C_{g, \Lambda} f := \left( \ip{f}{\pi(\lambda) g} \right)_{\lambda \in \Lambda}, \qquad f \in M^\infty(\Rdst),
\\
&C^*_{g, \Lambda} c := \sum_{\lambda \in \Lambda} c_\lambda \pi(\lambda) g,
\qquad c \in \ell^\infty(\Lambda).
\end{align*}
These maps are bounded between $M^p$ and $\ell^p$ spaces~\cite[Cor. 12.1.12]{gr01}
 with estimates 
\begin{align*}
&\norm{C_{g, \Lambda} f}_{\ell^p}
\lesssim \rel(\Lambda) \norm{g}_{M^1} \norm{f}_{M^p},
\\
&\norm{C^*_{g, \Lambda} c}_{M^p}
\lesssim  \rel(\Lambda) \norm{g}_{M^1} \norm{c}_{\ell^p}.
\end{align*}
The implicit constants in the last estimates are independent of $p\in [1,\infty ]$.

For $z=(x,\xi) \in \Rtdst$, the \emph{twisted shift} is the operator
$\twistshift(z): \ell ^\infty (\Lambda )  \to \ell^\infty (\Lambda+z )
$ given by
\begin{align*}
(\twistshift(z) c)_{\lambda+z} := e^{-2\pi i x \lambda_2} c_\lambda, \qquad \lambda=(\lambda_1,\lambda_2) \in \Lambda.
\end{align*}
As a consequence of the commutation relations~\eqref{eq_comp_tf}, the
analysis and synthesis operators satisfy the covariance property 
\begin{align}
\label{eq_cov_c}
\pi(z) C^*_{g,\Lambda}=C^*_{g,\Lambda+z} \twistshift(z)
\mbox{ and }
e^{2\pi i x \xi}C_{g,\Lambda}\pi(-z)=e^{-2\pi i x
  \xi}\twistshift(-z)C_{g,\Lambda+z} \, 
\end{align}
 for $z=(x,\xi)\in\Rdst\times\Rdst$. 

A Gabor system $\mathcal{G}(g,\Lambda)$ is a \emph{frame} if and only if $C_{g, \Lambda}: \LtRd \to \ell^2(\Lambda)$ is
bounded below,  and $\mathcal{G}(g,\Lambda)$  is a \emph{Riesz sequence} if and only if $C^*_{g, \Lambda}: \ell^2(\Lambda) \to \LtRd$ is bounded
below. As the following lemma shows, each of these conditions implies a restriction of the geometry of the set
$\Lambda$.

\begin{lemma}
Let $g \in \LtRd$ and let $\Lambda \subseteq \Rtdst$ be a set. Then the following holds.
\label{lemma_set_must_be}
\begin{itemize}
\item[(a)] If $\mathcal{G}(g,\Lambda)$ is a frame, then $\Lambda$ is relatively 
separated and relatively dense.
\item[(b)] If $\mathcal{G}(g,\Lambda)$ is a Riesz sequence, then $\Lambda$ is separated.
\end{itemize} 
\end{lemma}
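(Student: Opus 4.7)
The plan is to treat (a) and (b) separately, each by feeding a carefully chosen test object into the defining frame/Riesz inequality; both arguments reduce, via the covariance of the short-time Fourier transform, to the behaviour of $V_g g$ near the origin.

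\textbf{Part (a), relative separation.} I would apply the upper frame bound to $f = \pi(z) g$ for arbitrary $z \in \Rtdst$. Since $\|\pi(z)g\|_2 = \|g\|_2$ and $|\langle \pi(z)g, \pi(\lambda)g\rangle| = |V_g g(\lambda - z)|$ by \eqref{eq_tf_stft}, this gives
\begin{equation*}
\sum_{\lambda \in \Lambda} |V_g g(\lambda-z)|^2 \;\leq\; B\,\|g\|_2^2 \qquad (z \in \Rtdst).
\end{equation*}
Because $g \in L^2$, $V_g g$ is continuous on $\Rtdst$ with $V_g g(0) = \|g\|_2^2 > 0$; choose $\delta>0$ so that $|V_g g(w)| \geq \tfrac12 \|g\|_2^2$ on $B_\delta(0)$. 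Keeping only the $\lambda \in \Lambda \cap B_\delta(z)$ in the sum yields a uniform bound on $\#(\Lambda \cap B_\delta(z))$, and $\rel(\Lambda)<\infty$ follows by covering $B_1(0)$ with finitely many translates of $B_\delta(0)$.

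\textbf{Part (a), relative density.} I would argue by contradiction: if $\gap(\Lambda)=\infty$, pick $z_n \in \Rtdst$ with $B_n(z_n) \cap \Lambda = \emptyset$. Fix a Schwartz window $g_0$ with $\|g_0\|_2=1$ and test the lower frame bound on $\pi(z_n) g_0$; the covariance property \eqref{eq_tf_stft} reduces this to
\begin{equation*}
A \;\leq\; \sum_{\lambda \in \Lambda,\; |\lambda - z_n| \geq n} |V_g g_0(\lambda - z_n)|^2.
\end{equation*}
Having just established that $\Lambda$ is relatively separated, and noting that $V_g g_0 \in W(C_0,L^1)(\Rtdst)$ whenever $g \in M^1(\Rdst)$ (Lemma \ref{lemma_stft}), the sampling inequality \eqref{eq:c7} dominates this tail by a multiple of the $W(L^\infty,L^1)$-norm of $V_g g_0$ restricted to $\{|w|\geq n-1\}$, which tends to $0$, contradicting $A>0$. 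For a purely $L^2$ window one may instead invoke the Ramanathan--Steger density theorem cited in the introduction, which forces $D^-(\Lambda) \geq 1$ and hence relative density.

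\textbf{Part (b).} Suppose $\Lambda$ is not separated and choose $\lambda_n \neq \lambda_n' \in \Lambda$ with $|\lambda_n - \lambda_n'| \to 0$. By continuity of $V_g g$ at the origin, $V_g g(\lambda_n'-\lambda_n) \to \|g\|_2^2 \neq 0$, so for $n$ large I may pick a unimodular phase $\alpha_n \in \bC$ with $\overline{\alpha_n}\,\langle \pi(\lambda_n) g, \pi(\lambda_n')g\rangle = |V_g g(\lambda_n'-\lambda_n)|$. Define $c^{(n)} \in \ell^2(\Lambda)$ by $c^{(n)}_{\lambda_n} := 1$, $c^{(n)}_{\lambda_n'} := -\alpha_n$, and $0$ elsewhere, so that $\|c^{(n)}\|_2 = \sqrt{2}$. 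Expanding the norm,
\begin{equation*}
\|C^*_{g,\Lambda} c^{(n)}\|_2^2 \;=\; 2\|g\|_2^2 - 2\,|V_g g(\lambda_n'-\lambda_n)| \;\longrightarrow\; 0,
\end{equation*}
contradicting the lower Riesz bound $\|C^*_{g,\Lambda} c^{(n)}\|_2 \gtrsim \|c^{(n)}\|_2$.

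The only nontrivial obstacle is the relative density half of (a): for $g$ merely in $L^2$ one cannot directly exhibit a test function with STFT in a usable amalgam space, so the cleanest routes are either to work with $g \in M^1$ (the paper's standing assumption elsewhere) or to invoke the density theorem. The remaining three arguments require only the covariance identity and the continuity of $V_g g$ and are technically routine.
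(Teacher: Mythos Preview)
Your proposal is correct and follows essentially the same strategy as the paper. For part (a) the paper simply cites \cite[Theorem~1.1]{chdehe99}; your direct arguments are the standard ones, and your caveat that the relative-density half genuinely needs either $g \in M^1$ or an appeal to the density theorem is accurate.

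For part (b) your two-point test is exactly the paper's idea, but your inclusion of the unimodular phase $\alpha_n$ is in fact a necessary refinement rather than an embellishment. The paper asserts $\|\pi(\lambda_n)g - \pi(\gamma_n)g\|_2 \to 0$, yet this need not hold: writing $\lambda_n = (x_n,\xi_n)$ and $\gamma_n = (x'_n,\xi'_n)$, one finds $\langle\pi(\lambda_n)g, \pi(\gamma_n)g\rangle = e^{2\pi i(\xi_n - \xi'_n)x_n}\cdot I_n$ with $I_n \to \|g\|_2^2$, and the leading phase need not tend to $1$ when the points are unbounded. For instance $\lambda_n=(n,0)$, $\gamma_n=(n,1/(2n))$ gives phase $e^{-\pi i}=-1$ and hence $\|\pi(\lambda_n)g - \pi(\gamma_n)g\|_2 \to 2\|g\|_2$. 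Your choice of $\alpha_n$ forces the expansion to read $2\|g\|_2^2 - 2\,|V_g g(\lambda'_n - \lambda_n)| \to 0$ regardless of where the pair sits, so your version is the one that actually closes the argument.
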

\begin{proof}
For part (a) see for example \cite[Theorem 1.1]{chdehe99}. For part (b), suppose that
$\Lambda$ is not separated. Then there exist two sequences $\sett{\lambda_n: n \geq 1},
\sett{\gamma_n: n \geq 1} \subseteq \Lambda$ with $\lambda_n \not= \gamma_n$
such that $\abs{\lambda_n-\gamma_n} \longrightarrow 0$.
Hence we derive the following contradiction:
$\sqrt{2}=\norm{\delta_{\lambda_n}-\delta_{\gamma_n}}_{\ell^2(\Lambda)} \asymp
\norm{\pi(\lambda_n)g-\pi(\gamma_n)g}_{L^2(\Rdst)} \longrightarrow 0$.
\end{proof}

We extend the previous terminology to other values of $p \in [1,\infty]$. We say that $\mathcal{G}(g,\Lambda)$ is a
$p$-frame
for $M^p(\Rdst)$ if $C_{g, \Lambda}: M^p(\Rdst) \to \ell^p(\Lambda)$ is bounded below, and that $\mathcal{G}(g,\Lambda)$
is a $p$-Riesz sequence within $M^p(\Rdst)$ if $C^*_{g, \Lambda}: \ell^p(\Lambda) \to M^p(\Rdst)$ is bounded below.
Since boundedness below and left invertibility are different properties outside the context of Hilbert spaces, there
are other reasonable definitions of frames and Riesz sequences for $M^p$. This is largely immaterial for Gabor frames
with $g \in M^1$, since the theory of localized frames asserts that when such a system is a frame for $L^2$,
then it is a frame for all $M^p$ and moreover the operator $C_{g, \Lambda}:M^p \to \ell^p$ is left invertible
\cite{gr04, fogr05, bacahela06, bacahela06-1}.
Similar statements apply to Riesz sequences.

\section{Stability of time-frequency molecules}
We say that $\sett{f_\lambda: \lambda \in\Lambda} \subseteq \LtRd$ is a 
set  of time-frequency molecules,  if $\Lambda \subseteq \Rtdst$ is
a relatively separated set and there exists a non-zero $g\in M^1(\rd )$ and an envelope function  $\Phi
\in W(L^\infty,L^1)(\Rtdst)$ such that
\begin{align}
\label{eq_env_mol}
\abs{\stft_g f_\lambda (z)} \leq \Phi(z-\lambda),
\qquad \mbox{a.e. } z \in \Rdst, \lambda \in \Lambda . 
\end{align}
If \eqref{eq_env_mol} holds for some $g\in M^1(\rd )$, then it holds
for all $g\in M^1(\rd )$ (with an  envelope depending on $g$). 
\begin{rem}
\label{rem_gab_mol}
{\rm 
Every Gabor system  $\mathcal{G}(g,\Lambda)$ with window  $g \in M^1(\Rdst)$
and  a relatively separated set $\Lambda \subseteq \Rtdst$  is  a set of
time-frequency molecules.  
In this case the envelope can be chosen to be  $\Phi = \abs{V_gg}$,
which  belongs to $W(L^\infty,L^1)(\Rtdst)$ by  Lemma \ref{lemma_stft}.}
\end{rem}

 The following stability result will be one of our main technical tools.
\begin{theo}
\label{th_main_mol} Let $\sett{f_\lambda: \lambda \in \Lambda}$ be a set of 
time-frequency molecules. Then the following holds.
\begin{itemize}
\item[(a)] Assume that
\begin{align}
\label{eq_main_1}
\norm{f}_{M^p} \asymp \norm{(\ip{f}{f_\lambda})_{\lambda\in\Lambda}}_p,
\qquad \forall \, f \in M^p(\Rdst),
\end{align}
holds for some $1 \leq p \leq \infty$. Then \eqref{eq_main_1} holds
for all $1 \leq p \leq \infty$. In other words, if $\sett{ f_\lambda :
  \lambda \in \Lambda } $ is a $p$-frame for $M^p(\rd )$ for some
$p\in [1,\infty]$, then it is a $p$-frame for $M^p(\rd )$ for all $p\in [1,\infty]$. 

\item[(b)] Assume that 
\begin{align}
\label{eq_main_2}
\norm{\sum_{\lambda \in \Lambda} c_\lambda f_\lambda}_{M^p} \asymp \norm{c}_p,
\qquad c \in \ell^p(\Lambda),
\end{align}
holds for some $1 \leq p \leq \infty$. Then \eqref{eq_main_2} holds for all $1 \leq p \leq \infty$.
\end{itemize}
\end{theo}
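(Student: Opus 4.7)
The plan is to reduce both parts to an inverse-closedness statement for matrices with envelope decay, by factoring the analysis and synthesis operators of $\{f_\lambda\}$ through a single reference Gabor frame that works simultaneously for all $M^p$-spaces. The two ingredients I would rely on are: (i) the localized frame theory of \cite{fogr05,gr04}, which guarantees that a sufficiently dense Gabor system with window in $M^1(\rd)$ is a $p$-frame for every $M^p(\rd)$, with canonical dual in $M^1(\rd)$; and (ii) the Sj\"ostrand--Jaffard type stability theorems of \cite{sj95,gr04,albakr08}, which state that a matrix with sufficient off-diagonal decay that is left-invertible on one $\ell^{p_0}$ is left-invertible on every $\ell^p$, the left inverse belonging to the same class.

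First I would fix a window $\varphi\in M^1(\rd)$ and $\alpha>0$ small enough that $\cG(\varphi,\alpha\zdd)$ is a Gabor frame for $\lrd$. By localization theory the canonical dual $\tilde\varphi=S_\varphi\inv\varphi$ lies in $M^1(\rd)$, and for every $p\in[1,\infty]$ the analysis map $C_\varphi:M^p(\rd)\to\ell^p(\alpha\zdd)$ is bounded below and admits the left inverse $C^*_{\tilde\varphi}$, with $C^*_{\tilde\varphi} C_\varphi=\mathrm{id}$. Plugging the reconstruction formula $f=\sum_k V_\varphi f(\alpha k)\,\pi(\alpha k)\tilde\varphi$ into $\langle f,f_\lambda\rangle$ yields the factorization
\begin{align*}
\langle f, f_\lambda\rangle = \sum_k V_\varphi f(\alpha k)\,\overline{V_{\tilde\varphi} f_\lambda(\alpha k)} = (N\cdot C_\varphi f)_\lambda,
\end{align*}
with $N_{\lambda,k}:=\overline{V_{\tilde\varphi} f_\lambda(\alpha k)}$. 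Because $\tilde\varphi\in M^1(\rd)$, the molecule hypothesis \eqref{eq_env_mol} supplies an envelope $\Psi\in W(L^\infty,L^1)(\rdd)$ with $|N_{\lambda,k}|\leq\Psi(\alpha k-\lambda)$. A Schur test based on \eqref{eq:c7} and the relative separation of $\Lambda$ then shows that $N$ is bounded $\ell^p(\alpha\zdd)\to\ell^p(\Lambda)$ for every $p\in[1,\infty]$.

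The equivalence $\|C_\varphi f\|_p\asymp\|f\|_{M^p}$ converts \eqref{eq_main_1} into the statement that $N$ is bounded below on the closed subspace $X_p:=C_\varphi(M^p(\rd))\subseteq\ell^p(\alpha\zdd)$, which is complemented by the $p$-independent projection $P:=C_\varphi C^*_{\tilde\varphi}$. An analogous expansion of each molecule, $f_\lambda=\sum_k V_\varphi f_\lambda(\alpha k)\,\pi(\alpha k)\tilde\varphi$, gives $\sum_\lambda c_\lambda f_\lambda=C^*_{\tilde\varphi}(N^{*} c)$, so that the $p$-Riesz condition \eqref{eq_main_2} becomes boundedness below of $N^{*}$ on $\ell^p(\Lambda)$ (noting that the range of $N^{*}$ automatically sits inside $X_p$). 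In both cases the problem is reduced to transferring an $\ell^p$-bounded-below property, known at one exponent $p_0$, to all other $p$, for an operator with $W(L^\infty,L^1)$-envelope decay.

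For this transfer I would invoke the inverse-closedness of the Banach algebra of matrices between relatively separated sets whose entries are dominated by a $W(L^\infty,L^1)$-envelope. Either one constructs the one-sided inverse of $N$ (or $N^{*}$) explicitly within this algebra starting from its $\ell^{p_0}$-inverse, or one considers the augmented square operator $N+I-P$ (respectively $N^{*}+I-P$), which is a genuine isomorphism on $\ell^{p_0}$ precisely when $N$ is bounded below on $X_{p_0}$ and whose inverse again carries an envelope estimate. The main obstacle I expect is exactly this algebraic step: the standard Jaffard/Sj\"ostrand framework is formulated for matrices indexed by a single lattice, so the technical core of the argument is extending it to the irregular index set $\Lambda\times\alpha\zdd$ with the weaker $W(L^\infty,L^1)$-type decay, and encoding cleanly the restriction to the complemented range $X_p$ as a genuine $\ell^p$-invertibility statement. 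Once this is established, parts (a) and (b) follow simultaneously for all $p\in[1,\infty]$.
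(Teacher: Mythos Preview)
Your route differs from the paper's in one essential choice: you factor through a \emph{Gabor frame} $\cG(\varphi,\alpha\zdd)$, whereas the paper factors through a \emph{Wilson basis} $\cW(g)$. The payoff of the Wilson basis is that the coefficient map $C_\cW:M^p\to\ell^p(\Gamma)$ is a genuine isomorphism for every $p$, so the projection $P$ and the complemented-range issue you worry about simply disappear; the price is that Wilson atoms are symmetric combinations $\sum_{\sigma\in\{-1,1\}^d}\alpha_{\gamma,\sigma}\pi(\gamma_1,\sigma\gamma_2)g$, so the resulting matrix is dominated not by a single envelope $\env(\lambda-\gamma)$ but by $\sum_{\sigma\in\{-1,1\}^{2d}}\env(\lambda-\sigma\gamma)$. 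The paper then proves a tailored variant of Sj\"ostrand's lemma (its Proposition~8.1) for matrices with this reflected off-diagonal decay over arbitrary relatively separated index sets. Your Gabor-frame factorization gives the simpler single-diagonal decay $|N_{\lambda,k}|\leq\Psi(\alpha k-\lambda)$, but forces you to confront the range $X_p=C_\varphi(M^p)$.

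That is where your proposal has a concrete gap. The ``augmented square operator $N+I-P$'' does not type-check: $N$ maps $\ell^p(\alpha\zdd)\to\ell^p(\Lambda)$ while $I-P$ maps $\ell^p(\alpha\zdd)\to\ell^p(\alpha\zdd)$, so the sum is undefined and certainly not square. What would work is the block column $\widetilde N c:=(Nc,(I-P)c)\in\ell^p(\Lambda)\times\ell^p(\alpha\zdd)$; since $NP=N$ (because $N=SC^*_{\tilde\varphi}$ and $C^*_{\tilde\varphi}C_\varphi=\mathrm{id}$), this $\widetilde N$ is bounded below on all of $\ell^{p_0}(\alpha\zdd)$ exactly when $N$ is bounded below on $X_{p_0}$, and its entries still obey a $W(L^\infty,L^1)$ envelope once you realize $\Lambda\sqcup\alpha\zdd$ as a relatively separated set (e.g.\ by shifting one copy). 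You then still need the rectangular, irregular-index, $W(L^\infty,L^1)$-envelope version of Sj\"ostrand's stability result---which is not quite what \cite{albakr08} provides (that reference assumes stronger decay; the $M^1$ case was left open there), and is precisely the technical core the paper supplies with its Proposition~8.1. So your reduction is sound in spirit, but the two steps you flag as ``the main obstacle'' are real and are exactly what the paper's appendix is devoted to; the Wilson-basis trick is how the paper sidesteps the first of them.
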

The result is similar in spirit to other results in the literature \cite{su07-5, albakr08, shsu09, te10, su10-2}, but
 none of these is  directly  applicable to our setting. We postpone
 the proof of Theorem 
\ref{th_main_mol} to the appendix, so as not to interrupt the natural
flow of the article. As in the cited  references,
the proof elaborates on Sj\"ostrand's Wiener-type lemma \cite{sj95}. 

As a special case  of Theorem \ref{th_main_mol} we record  the
following corollary. 
\begin{coro}
\label{coro_main_mol}
Let $g \in M^1(\Rdst)$ and let $\Lambda \subseteq \Rtdst$ be a relatively separated set. Then the following holds.
\begin{itemize}
\item[(a)] If $\mathcal{G}(g,\Lambda)$ is a $p$-frame for $M^p(\Rdst)$ for some $p \in [1, \infty]$, then it is a
$p$-frame for $M^p(\Rdst)$  for all $p \in [1,\infty]$.
\item[(b)] If $\mathcal{G}(g,\Lambda)$ is a $p$-Riesz sequence in
  $M^p(\Rdst)$ for some $p \in [1, \infty]$, then 
it is $p$-Riesz sequence in $M^p(\Rdst)$ for all $p \in [1,\infty]$.
\end{itemize}
\end{coro}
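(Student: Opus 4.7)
The plan is to reduce the corollary directly to Theorem~\ref{th_main_mol} by recognizing the Gabor system $\mathcal{G}(g,\Lambda)$ as a canonical family of time-frequency molecules. I would set $f_\lambda := \pi(\lambda)g$ for $\lambda \in \Lambda$ and first verify the envelope condition~\eqref{eq_env_mol}. The covariance identity~\eqref{eq_tf_stft} gives
\begin{align*}
|V_g f_\lambda(z)| = |V_g g(z-\lambda)|, \qquad z \in \Rtdst,\; \lambda \in \Lambda,
\end{align*}
so that $\Phi := |V_g g|$ serves as a pointwise envelope. Lemma~\ref{lemma_stft}(a) applied to $g \in M^1(\Rdst)$ yields $V_g g \in W(C_0,L^1)(\Rtdst) \subseteq W(L^\infty,L^1)(\Rtdst)$, which is the required regularity. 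Since $\Lambda$ is relatively separated by hypothesis, $\{f_\lambda : \lambda \in \Lambda\}$ is then a set of time-frequency molecules in the sense of~\eqref{eq_env_mol}; this is the content of Remark~\ref{rem_gab_mol}.

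Once this identification is in place, both parts become a translation of the definitions. For part~(a), the upper bound from Section~\ref{sec_maps} gives $\|C_{g,\Lambda} f\|_{\ell^p} \lesssim \rel(\Lambda)\|g\|_{M^1}\|f\|_{M^p}$ with a constant uniform in $p$; combined with the assumed lower bound at some $p_0 \in [1,\infty]$, this says exactly that~\eqref{eq_main_1} holds at $p_0$ with $f_\lambda = \pi(\lambda)g$. Theorem~\ref{th_main_mol}(a) then propagates~\eqref{eq_main_1} to all $p \in [1,\infty]$, which by definition means that $\mathcal{G}(g,\Lambda)$ is a $p$-frame for $M^p(\Rdst)$ at every $p$. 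Part~(b) is completely analogous: the uniform upper bound on $C^*_{g,\Lambda}$ from Section~\ref{sec_maps} together with the assumed lower bound at some $p_0$ is precisely the two-sided estimate~\eqref{eq_main_2}, and Theorem~\ref{th_main_mol}(b) extends it to all $p$.

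I expect no genuine obstacle in this reduction: the deep work has been absorbed into Theorem~\ref{th_main_mol}, whose appendix proof relies on Sj\"ostrand's Wiener-type lemma for matrices with off-diagonal decay. The only point requiring care is matching the standing hypothesis on $\Lambda$, but relative separation is already assumed in the corollary and is precisely what is needed both for the uniform-in-$p$ upper bounds on $C_{g,\Lambda}$ and $C^*_{g,\Lambda}$ and for the molecule framework of Theorem~\ref{th_main_mol}.
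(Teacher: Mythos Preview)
Your proposal is correct and matches the paper's approach: the corollary is stated without proof as a direct special case of Theorem~\ref{th_main_mol}, relying on Remark~\ref{rem_gab_mol} (which is exactly the envelope verification you spell out) to identify $\mathcal{G}(g,\Lambda)$ as a set of time-frequency molecules. You have simply made explicit the two-line reduction the paper leaves implicit.
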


The space $M^1(\rd )$ is the largest space of windows for which the
corollary holds. Under a stronger condition on  $g$,  statement (a)  was
already derived in~\cite{albakr08}, the general case was left open.  

\section{Weak convergence}
\subsection{Convergence of sets}
The Hausdorff distance between two sets $X,Y \subseteq \Rdst$ is defined as
\begin{align*}
d_H(X,Y) := \inf \sett{\varepsilon>0: X \subseteq Y+ B_\varepsilon(0),
Y \subseteq X+ B_\varepsilon(0)}.
\end{align*}
Note that $d_H(X,Y)=0$ if and only if $\overline{X} = \overline{Y}$.

Let $\Lambda \subseteq \Rdst$ be a set. A sequence
$\{\Lambda_n: n \geq 1\}$ of subsets of $\Rdst$ \emph{converges
  weakly} to $\Lambda$, in short $\Lambda _n \weakconv \Lambda$,  if 
\begin{align}
\label{eq_weak_conv}
d_H \big( (\Lambda_n \cap \bar{B}_R(z)) \cup \partial \bar{B}_R(z))
, (\Lambda  \cap \bar{B}_R(z) ) \cup \partial \bar{B}_R(z))
\big) \to 0, \qquad \forall z\in \Rdst , R>0 \,  . 
\end{align}
(To understand  the role of the boundary of the ball in the definition, consider the following example
in dimension $d=1$: $\Lambda_n:=\sett{1+1/n}$, $\Lambda:=\sett{1}$ and $B_R(z)=[0,1]$.) 

 The following lemma provides
an  alternative description of weak convergence.
\begin{lemma}
\label{lemma_alt_weak}
Let $\Lambda \subseteq \Rdst$ and $\Lambda_n \subseteq \Rdst, n \geq 1$ be sets.
Then $\Lambda_n \weakconv \Lambda$ if and only if for every $R>0$ and $\varepsilon>0$
there exists $n_0 \in \Nst$ such that for all $n \geq n_0$
\begin{align*}
\Lambda \cap B_R(0) \subseteq \Lambda_n + B_\varepsilon(0) \quad
\text{ and }  \quad \Lambda_n \cap B_R(0) \subseteq \Lambda + B_\varepsilon(0).
\end{align*}
\end{lemma}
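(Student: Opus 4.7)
The lemma is a bookkeeping equivalence between two formulations of Hausdorff closeness. Two asymmetries must be reconciled: formula~\eqref{eq_weak_conv} augments the truncated sets with the boundary $\partial\bar B_R(z)$ while the alternative does not, and the alternative only tests balls centered at the origin while \eqref{eq_weak_conv} allows arbitrary centers. Both differences are cosmetic and can be absorbed by slightly enlarging radii; I would establish each implication separately.

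For the forward direction, given $R,\varepsilon>0$, I would apply the weak convergence with $z=0$, radius $R':=R+\varepsilon$, and Hausdorff tolerance $\varepsilon/2$. For $\lambda\in\Lambda\cap B_R(0)$, any witness $p$ from $(\Lambda_n\cap\bar B_{R'}(0))\cup\partial\bar B_{R'}(0)$ within $\varepsilon/2$ of $\lambda$ cannot lie on $\partial\bar B_{R'}(0)$, because $|\lambda|<R$ forces $\mathrm{dist}(\lambda,\partial\bar B_{R'}(0))\geq R'-R=\varepsilon$; hence $p\in\Lambda_n$, giving $\lambda\in\Lambda_n+B_\varepsilon(0)$. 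The symmetric inclusion is identical.

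For the reverse direction, fix $z\in\Rdst$, $R>0$, $\varepsilon>0$, and feed $R'':=|z|+R+1$ and tolerance $\varepsilon$ into the alternative condition. Each $\lambda\in\Lambda\cap\bar B_R(z)\subseteq\Lambda\cap B_{R''}(0)$ then has a partner $\lambda_n\in\Lambda_n$ with $|\lambda-\lambda_n|<\varepsilon$. I would split on whether $\lambda$ is \emph{deep} in the ball or \emph{near} its boundary: if $|\lambda-z|\leq R-\varepsilon$, the triangle inequality places $\lambda_n$ inside $\bar B_R(z)$, so $\lambda_n\in\Lambda_n\cap\bar B_R(z)$; otherwise the radial projection $z+R(\lambda-z)/|\lambda-z|$ (or any boundary point when $\lambda=z$) lies in $\partial\bar B_R(z)$ at distance at most $\varepsilon$ from $\lambda$, a case which also takes care of the degenerate situation $R<\varepsilon$. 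In both cases $\lambda$ is approximated to within $\varepsilon$ by a point of $(\Lambda_n\cap\bar B_R(z))\cup\partial\bar B_R(z)$; points already lying on $\partial\bar B_R(z)$ are trivially covered by themselves. The reverse Hausdorff inclusion is symmetric, yielding~\eqref{eq_weak_conv}.

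No step is conceptually hard; the only care required is to enlarge radii in the \emph{correct} direction (outward in both implications) and to recognize that the auxiliary term $\partial\bar B_R(z)$ in~\eqref{eq_weak_conv} is exactly what prevents boundary-straddling points from spoiling the Hausdorff distance, as illustrated by the parenthetical example in the definition.
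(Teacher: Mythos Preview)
Your argument is correct. The paper itself does not supply a proof of this lemma; it is stated and then immediately used, with the understanding that the verification is routine. Your write-up handles the two genuine subtleties---the auxiliary boundary term $\partial\bar B_R(z)$ and the reduction from arbitrary centers $z$ to the origin---exactly as one would expect: enlarge the radius so that boundary witnesses are excluded in the forward direction, and in the reverse direction split according to whether a point is deep in the ball (use the $\Lambda_n$-partner) or near the boundary (use the radial projection). The only cosmetic point is that in the reverse direction you obtain $d_H\leq\varepsilon$ rather than strict inequality, but since $\varepsilon>0$ is arbitrary this suffices for the convergence $d_H\to 0$.
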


The following  consequence of Lemma \ref{lemma_alt_weak} is often
useful to identify weak limits. 
\begin{lemma}
\label{lemma_weak_inc}
Let $\Lambda_n \weakconv \Lambda$ and $\Gamma_n \weakconv
\Gamma$. Suppose that for every $R>0$ and $\varepsilon>0$
there exists $n_0 \in \Nst$ such that for all $n \geq n_0$
\begin{align*}
&\Lambda_n \cap B_R(0) \subseteq \Gamma_n + B_\varepsilon(0).
\end{align*}
Then $\overline{\Lambda} \subseteq \overline{\Gamma}$.
\end{lemma}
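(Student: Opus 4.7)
My plan is to show the pointwise inclusion $\Lambda \subseteq \overline{\Gamma}$, from which $\overline{\Lambda} \subseteq \overline{\Gamma}$ follows since $\overline{\Gamma}$ is closed. The proof will be a three-step chase using the alternative description from Lemma~\ref{lemma_alt_weak} twice, together with the hypothesis, to approximate each $\lambda \in \Lambda$ within arbitrary precision by elements of $\Gamma$.

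Fix $\lambda \in \Lambda$ and $\varepsilon > 0$. Choose $R > |\lambda|$ so that $\lambda \in B_R(0)$. First, from $\Lambda_n \weakconv \Lambda$ and Lemma~\ref{lemma_alt_weak} applied with radius $R$ and tolerance $\varepsilon/3$, for all large enough $n$ there exists $\lambda_n \in \Lambda_n$ with $|\lambda-\lambda_n|<\varepsilon/3$. Pick $R' > R + \varepsilon/3$ so that $\lambda_n \in B_{R'}(0)$ for all such $n$. Second, applying the hypothesis with radius $R'$ and tolerance $\varepsilon/3$, there is $\gamma_n \in \Gamma_n$ with $|\lambda_n-\gamma_n|<\varepsilon/3$; note $|\gamma_n| < R' + \varepsilon/3$, so $\gamma_n \in B_{R''}(0)$ for a suitably enlarged $R''$. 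Third, from $\Gamma_n \weakconv \Gamma$ and Lemma~\ref{lemma_alt_weak} applied with radius $R''$ and tolerance $\varepsilon/3$, for all large enough $n$ there exists $\gamma \in \Gamma$ with $|\gamma_n-\gamma|<\varepsilon/3$. The triangle inequality then yields $|\lambda-\gamma|<\varepsilon$.

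Since $\varepsilon>0$ was arbitrary, $\lambda \in \overline{\Gamma}$. As $\lambda$ was an arbitrary point of $\Lambda$, we conclude $\Lambda \subseteq \overline{\Gamma}$, hence $\overline{\Lambda} \subseteq \overline{\Gamma}$.

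There is no real obstacle here; the only point requiring a small dose of care is that the intermediate points $\lambda_n$ and $\gamma_n$ may drift slightly outside the original ball $B_R(0)$, so the radii in the successive applications of Lemma~\ref{lemma_alt_weak} and of the hypothesis must be enlarged (by at most $\varepsilon/3$ at each step). This is a purely bookkeeping matter that does not affect the conclusion.
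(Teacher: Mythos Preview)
Your proof is correct and is precisely the argument the paper has in mind: the paper does not spell out a proof of this lemma but merely records it as ``a consequence of Lemma~\ref{lemma_alt_weak}'', and your three-step $\varepsilon/3$ chase using Lemma~\ref{lemma_alt_weak} twice together with the hypothesis is the natural way to make that remark explicit. The only minor clean-up would be to fix the enlarged radii $R' = R + \varepsilon/3$ and $R'' = R' + \varepsilon/3$ at the outset (they depend only on $\lambda$ and $\varepsilon$) and then choose a single $n$ large enough for all three inclusions simultaneously, but you already flag this bookkeeping point yourself.
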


The notion of weak convergence will be a technical tool in the proofs
of   deformation results. 

\subsection{Measures and compactness}
In this section we explain how the weak convergence of sets can be understood by the convergence of some
associated measures. First we note the following semicontinuity property, that follows directly from Lemma
\ref{lemma_alt_weak}.

\begin{lemma}
\label{lemma_mes_supp_a}
Let $\sett{\mu_n : n \geq 1} \subset \wmes(\Rdst)$ be a sequence of measures that converges to a measure
$\mu \in \wmes(\Rdst)$ in the $\sigma(\wmes, \wtest)$ topology. Suppose that $\supp(\mu_n) \subseteq \Lambda_n$
and that $\Lambda_n \weakconv \Lambda$. Then $\supp(\mu) \subseteq \overline{\Lambda}$.
\end{lemma}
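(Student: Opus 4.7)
The statement is essentially a semicontinuity result for supports under weak-$\ast$ convergence of measures, once we combine it with the geometric information encoded in $\Lambda_n \weakconv \Lambda$. The plan is to prove the contrapositive at the level of points: if $z_0 \notin \overline{\Lambda}$, then $z_0 \notin \supp(\mu)$.

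Fix such a $z_0$ and set $\delta := \dist(z_0, \overline{\Lambda}) > 0$. Choose $r, \varepsilon > 0$ with $r + \varepsilon < \delta$; then by the triangle inequality
\[
B_r(z_0) \cap \bigl( \Lambda + B_\varepsilon(0) \bigr) = \emptyset.
\]
Now pick $R > |z_0| + r$, so that $B_r(z_0) \subseteq B_R(0)$. By Lemma~\ref{lemma_alt_weak} applied to the convergence $\Lambda_n \weakconv \Lambda$, there exists $n_0$ such that for all $n \geq n_0$
\[
\Lambda_n \cap B_R(0) \subseteq \Lambda + B_\varepsilon(0).
\]
Intersecting with $B_r(z_0)$ and using the previous display, we obtain $\Lambda_n \cap B_r(z_0) = \emptyset$ for $n \geq n_0$. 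Since $\supp(\mu_n) \subseteq \Lambda_n$, this means $\mu_n$ vanishes on $B_r(z_0)$ for all such $n$.

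To conclude, I would test against arbitrary $f \in C_c(B_r(z_0))$. Such an $f$ lies in $\wtest(\Rdst)$, because it is continuous with compact support and hence has finite amalgam norm. For $n \geq n_0$ we have $\langle \mu_n, f\rangle = 0$, and the hypothesis $\mu_n \to \mu$ in $\sigma(\wmes, \wtest)$ gives $\langle \mu, f\rangle = \lim_n \langle \mu_n, f\rangle = 0$. Since $f \in C_c(B_r(z_0))$ was arbitrary, the restriction of $\mu$ to $B_r(z_0)$ vanishes (for instance by the uniqueness part of the Riesz representation theorem applied locally), so $z_0 \notin \supp(\mu)$. As $z_0$ was an arbitrary point outside $\overline{\Lambda}$, we conclude $\supp(\mu) \subseteq \overline{\Lambda}$.

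There is no serious obstacle here; the only point requiring care is the quantitative choice of $r$, $\varepsilon$, and $R$ so that Lemma~\ref{lemma_alt_weak} is strong enough to push $\Lambda_n$ away from $B_r(z_0)$ inside a ball that actually contains $B_r(z_0)$. Everything else is a straightforward application of weak-$\ast$ convergence against compactly supported test functions.
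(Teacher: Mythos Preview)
Your proof is correct and is precisely the argument the paper has in mind: the authors state that the lemma ``follows directly from Lemma~\ref{lemma_alt_weak}'' without giving further details, and your write-up is the natural unpacking of that remark, using Lemma~\ref{lemma_alt_weak} to push $\Lambda_n$ away from a small ball around $z_0$ and then testing $\mu_n \to \mu$ against compactly supported continuous functions.
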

The example $\mu_n=\tfrac{1}{n}\delta$, $\mu = 0$ shows that in Lemma \ref{lemma_mes_supp_a} the inclusions cannot 
in general be improved to equalities. Such improvement is however possible for certain classes of measures. A 
Borel measure $\mu$ is called \emph{natural-valued} if for all Borel sets $E$ the value
$\mu(E)$ is a non-negative integer or infinity. For these measures the following holds.
\begin{lemma}
\label{lemma_mes_supp_b}
Let $\sett{\mu_n : n \geq 1} \subset \wmes(\Rdst)$ be a sequence of natural-valued measures that converges to a measure
$\mu \in \wmes(\Rdst)$ in the $\sigma(\wmes, \wtest)$ topology. Then 
$\supp(\mu_n) \weakconv \supp(\mu)$.
\end{lemma}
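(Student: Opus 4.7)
The plan is to verify the characterization of weak convergence provided by Lemma~\ref{lemma_alt_weak}. Write $\Lambda_n:=\supp(\mu_n)$ and $\Lambda:=\supp(\mu)$. Since each $\mu_n$ and $\mu$ is natural-valued and lies in $\wmes(\Rdst)$, so that $\mu_n(B_1(x))<\infty$ for every $x$, I can use the structural description $\mu_n=\sum_{\lambda\in\Lambda_n}m_{n,\lambda}\,\delta_\lambda$ with integer multiplicities $m_{n,\lambda}\geq 1$ and with $\Lambda_n\cap B_R(0)$ finite for every $R>0$; similarly for $\mu$. The task then reduces to: given $R>0$ and $\varepsilon>0$, produce $n_0$ so that both inclusions
\[
\Lambda\cap B_R(0)\subseteq \Lambda_n+B_\varepsilon(0) \quad \text{and}\quad \Lambda_n\cap B_R(0)\subseteq \Lambda+B_\varepsilon(0)
\]
hold for all $n\geq n_0$.

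For the first inclusion I would enumerate the finite set $\Lambda\cap B_R(0)=\{\lambda_1,\dots,\lambda_N\}$ and, for each $\lambda_j$, pick a non-negative $\phi_j\in C_c(\Rdst)\subseteq\wtest(\Rdst)$ with $\phi_j\leq 1$, $\phi_j(\lambda_j)=1$, and $\supp\phi_j\subseteq B_\varepsilon(\lambda_j)$. Then $\int\phi_j\,d\mu\geq m_{\lambda_j}\geq 1$, and weak$^*$ convergence in $\sigma(\wmes,\wtest)$ yields $\int\phi_j\,d\mu_n>\tfrac12$ eventually. Since $\phi_j\leq\mathbf{1}_{B_\varepsilon(\lambda_j)}$, this forces $\mu_n(B_\varepsilon(\lambda_j))>\tfrac12$; the natural-valued hypothesis upgrades this to $\mu_n(B_\varepsilon(\lambda_j))\geq 1$, so $\Lambda_n\cap B_\varepsilon(\lambda_j)\neq\emptyset$. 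Taking the maximum $n_0$ over the finitely many $j$ handles this direction uniformly. For the second inclusion I would argue by contradiction: if it fails, then after passing to a subsequence there exist $\lambda_n\in\Lambda_n\cap B_R(0)$ with $\mathrm{dist}(\lambda_n,\Lambda)\geq\varepsilon$, and compactness of $\overline{B}_R(0)$ lets me assume $\lambda_n\to\lambda^\ast$ with $\mathrm{dist}(\lambda^\ast,\Lambda)\geq\varepsilon$. Hence $B_{\varepsilon/2}(\lambda^\ast)$ misses $\supp(\mu)$. Choosing $\phi\in C_c(\Rdst)$ with $0\leq\phi\leq 1$, $\phi\equiv 1$ on $B_{\varepsilon/4}(\lambda^\ast)$, and $\supp\phi\subseteq B_{\varepsilon/2}(\lambda^\ast)$ gives $\int\phi\,d\mu=0$, while for $n$ large enough $\lambda_n\in B_{\varepsilon/4}(\lambda^\ast)$ yields $\int\phi\,d\mu_n\geq m_{n,\lambda_n}\geq 1$, contradicting weak$^*$ convergence.

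The main conceptual obstacle is the gap between weak$^*$ convergence, which only controls integrals against $\wtest$, and the support, which is an inherently pointwise notion. The natural-valued hypothesis is exactly what bridges this gap, converting a soft quantitative estimate such as $\mu_n(B_\varepsilon(\lambda_j))>\tfrac12$ into the integral statement $\mu_n(B_\varepsilon(\lambda_j))\geq 1$, which in turn guarantees the existence of an actual atom. Without this integer quantization the counterexample $\mu_n=\tfrac{1}{n}\delta$ mentioned after Lemma~\ref{lemma_mes_supp_a} would obstruct the argument at precisely this step, so my plan depends critically on this hypothesis and nowhere else.
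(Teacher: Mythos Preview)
The paper skips the proof of this lemma entirely, calling it elementary, so there is no reference argument to compare against. Your route via Lemma~\ref{lemma_alt_weak} is the natural one, and your treatment of the second inclusion is correct; you also identify precisely where the natural-valued hypothesis on the $\mu_n$ does its work.

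There is one gap in the first inclusion. You assert that $\mu$ itself is natural-valued and hence that $\Lambda\cap B_R(0)$ is finite, but the hypothesis only gives this for the $\mu_n$. That the limit measure inherits this structure is true (the paper alludes to it after Lemma~\ref{lemma_compactness}), but it requires its own argument, and your enumeration-then-take-the-maximum step collapses without it. The cleanest repair is to handle the first inclusion symmetrically with the second, by contradiction and compactness: if it fails along a subsequence, pick $z_n\in\Lambda\cap B_R(0)$ with $B_\varepsilon(z_n)\cap\Lambda_n=\emptyset$, pass to $z_n\to z^\ast\in\Lambda$ (supports are closed), and observe that $B_{\varepsilon/2}(z^\ast)\cap\Lambda_n=\emptyset$ for large $n$. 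Testing against a bump $\phi$ with $\phi>0$ near $z^\ast$ and $\supp\phi\subseteq B_{\varepsilon/2}(z^\ast)$ gives $\int\phi\,d\mu>0$ (since $z^\ast\in\supp(\mu)$ and $\mu\geq 0$ as a weak$^*$ limit of positive measures) while $\int\phi\,d\mu_n=0$ eventually, a contradiction. This version needs no structural claim about $\mu$ and makes the argument self-contained.
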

The proof of Lemma \ref{lemma_mes_supp_b} is elementary and therefore we skip it.
Lemma \ref{lemma_mes_supp_b} is useful to deduce properties of weak convergence of sets from properties of convergence of
measures, as we now show. For a set $\Lambda \subseteq \Rdst$, let us consider the 
natural-valued measure
\begin{align}
\label{eq_shah}
\Shah_\Lambda := \sum_{\lambda \in \Lambda} \delta_\lambda.
\end{align}
One can readily verify that $\Lambda$ is relatively separated if and only if $\Shah_\Lambda \in \wmes(\Rdst)$ and
moreover,
\begin{align}
\label{eq_mes_am}
&\norm{\Shah_\Lambda}_\wmes \asymp \rel(\Lambda).
\end{align}

For sequences of sets $\sett{\Lambda_n: n\geq 1}$ with uniform
separation, i.e., 
\begin{align*}
\inf_n \sep (\Lambda _n) = \inf\{\abs{\lambda - \lambda'}: \lambda \not= \lambda', \lambda,\lambda' \in \Lambda_n, n
\geq 1\} >0, 
\end{align*}
the convergence  $\Lambda_n \weakconv \Lambda$ is equivalent to the
convergence $\Shah_{\Lambda_n} \to  \Shah_\Lambda$ 
in \\  $\sigma( \wmes, \wtest)$. For sequences without uniform separation
the situation is slightly more technical 
because of possible multiplicities in the limit set.
\begin{lemma}
\label{lemma_compactness}
Let $\sett{\Lambda_n: n \geq 1}$ be a sequence of relatively separated 
sets in  $\Rdst$. Then the following hold.
\begin{itemize}
\item[(a)] If $\Shah_{\Lambda_n} \longrightarrow \mu$ in
  $\sigma(\wmes, \wtest)$  
for some measure $\mu \in \wmes$, then
$\sup_n \rel(\Lambda_n) < \infty$ and $\Lambda_n \weakconv \Lambda := \supp(\mu)$.
\item[(b)] If $\limsup_n \rel(\Lambda_n) < \infty$, then there exists a subsequence
$\sett{\Lambda_{n_k}: k \geq 1}$ that converges weakly to a relatively separated 
set.
\item[(c)] If $\limsup_n \rel(\Lambda_n) < \infty$ and $\Lambda_n
  \weakconv \Lambda$  for some set
$\Lambda \subseteq \Rdst$,  then $\Lambda$ is relatively separated (and in particular closed).
\end{itemize}
\end{lemma}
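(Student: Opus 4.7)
The plan is to leverage the correspondence $\Lambda \mapsto \Shah_\Lambda$ between relatively separated sets and natural-valued measures in $\wmes$, exploiting both the equivalence $\norm{\Shah_\Lambda}_\wmes \asymp \rel(\Lambda)$ from \eqref{eq_mes_am} and the already-established Lemma \ref{lemma_mes_supp_b}. I would address the three parts in the order (a), (c), (b).

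For part (a), since $\Shah_{\Lambda_n}$ converges weakly* in $\wmes$, the Banach--Steinhaus theorem applied in the Banach space $\wtest$ gives $\sup_n \norm{\Shah_{\Lambda_n}}_\wmes < \infty$, and \eqref{eq_mes_am} then yields the desired uniform bound $\sup_n \rel(\Lambda_n) < \infty$. In particular every $\Lambda_n$ is locally finite, hence closed, so $\supp(\Shah_{\Lambda_n}) = \Lambda_n$. Since each $\Shah_{\Lambda_n}$ is natural-valued, Lemma \ref{lemma_mes_supp_b} then delivers $\Lambda_n \weakconv \supp(\mu) =: \Lambda$ directly.

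For part (c) I would bound $\#(\Lambda \cap B_1(z))$ uniformly in $z$ by transferring counts back to $\Lambda_n$. Given distinct points $\lambda_1,\ldots,\lambda_K \in \Lambda \cap B_1(z)$, set $\delta := \tfrac{1}{3}\min_{i\neq j}\abs{\lambda_i-\lambda_j}>0$ and $\eta := \min(\delta,1)$. Applying Lemma \ref{lemma_alt_weak} with a radius around the origin large enough to contain $B_1(z)$ and with tolerance $\eta$, for all sufficiently large $n$ each $\lambda_i$ admits some $\mu_i \in \Lambda_n$ with $\abs{\mu_i-\lambda_i}<\eta$. The triangle inequality forces $\abs{\mu_i-\mu_j} > 3\delta - 2\eta \geq \delta > 0$ and $\mu_i \in B_{1+\eta}(z) \subseteq B_2(z)$, so the $\mu_i$ are pairwise distinct points of $\Lambda_n \cap B_2(z)$. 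Covering $B_2(z)$ by a dimensional number $C_d$ of unit balls gives $K \leq \#(\Lambda_n \cap B_2(z)) \leq C_d \rel(\Lambda_n) \leq C_d M$, with $M := \sup_n \rel(\Lambda_n)$, uniformly in $z$.

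For part (b), the hypothesis together with \eqref{eq_mes_am} yields $\sup_n \norm{\Shah_{\Lambda_n}}_\wmes < \infty$. Since $\wtest = W(C_0,L^1)$ is a separable Banach space, the closed ball of $\wmes$ is weak* sequentially compact by Banach--Alaoglu, so one can extract a subsequence $\Shah_{\Lambda_{n_k}} \to \mu$ in $\sigma(\wmes,\wtest)$; parts (a) and (c) then immediately show that $\Lambda_{n_k} \weakconv \supp(\mu)$ and that $\supp(\mu)$ is relatively separated. The main delicate point lies in (c): the calibration $\eta = \min(\delta,1)$ is essential because it simultaneously keeps the approximants $\mu_i$ pairwise distinct and confines them to the fixed enlarged ball $B_2(z)$. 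Without controlling both conditions at once, distinct points of $\Lambda$ could in principle be approximated by a single point of $\Lambda_n$ (the kind of collapse that makes the inclusions in Lemma \ref{lemma_mes_supp_a} strict), and the counting argument would fail.
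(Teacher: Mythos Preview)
Your proof is correct and essentially follows the route the paper itself indicates: the paper does not give a detailed proof but simply states that the lemma ``follows easily from Lemma~\ref{lemma_mes_supp_b}, \eqref{eq_mes_am} and the weak$^*$-compactness of the ball of $\wmes$,'' which are precisely the ingredients you use for (a) and (b). Your direct counting argument for (c) is a slight elaboration beyond what the paper sketches; an alternative in the spirit of the paper's hint would be to pass to a subsequence along which $\Shah_{\Lambda_{n_k}} \to \mu$ in $\sigma(\wmes,\wtest)$, observe that $\mu$ is natural-valued with $\supp(\mu)=\overline{\Lambda}$, and bound $\#(\Lambda\cap B_1(z))$ by $\mu(B_1(z)) \leq \norm{\mu}_\wmes$, but your elementary argument via Lemma~\ref{lemma_alt_weak} is equally valid and arguably cleaner.
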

The lemma follows easily from Lemma \ref{lemma_mes_supp_b},
\eqref{eq_mes_am} and the weak$^*$-compactness of the ball of 
$\wmes$, and hence we do not prove it. We remark that the limiting
measure $\mu$ in the lemma is not necessarily 
$\Shah_\Lambda$. For example, if $d=1$ and $\Lambda_n := \sett{0,1/n,1,1+1/n,1-1/n}$, then $\Shah_{\Lambda_n} 
\longrightarrow 2 \delta_0 + 3 \delta_1$. The measure $\mu$ in (a) can be shown to be natural-valued, and therefore
we can interpret it as representing a set with multiplicities. 

The following lemma provides a version of \eqref{eq_mes_am} for linear
combinations of point measures.
\begin{lemma}
\label{lemma_norms_mes}
Let $\Lambda \subseteq \Rdst$ be a relatively separated set and consider a measure
\begin{align*}
\mu := \sum_{\lambda \in \Lambda} c_\lambda \delta_\lambda 
\end{align*}
with coefficients  $c_\lambda \in \bC$. Then
\begin{align*}
&\norm{\mu} = \abs{\mu}(\Rdst) = \norm{c}_1,
\\
&\norm{c}_\infty \leq \norm{\mu}_{\wmes} \lesssim \rel(\Lambda) \norm{c}_\infty.
\end{align*}
\end{lemma}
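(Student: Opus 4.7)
The plan is to unpack the two definitions (total variation and the $\wmes$-norm) and observe that everything reduces to counting points of $\Lambda$ in unit balls.

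First I would establish the identity $\norm{\mu} = \abs{\mu}(\Rdst) = \norm{c}_1$. Since $\mu$ is purely atomic and supported on the countable set $\Lambda$, its total variation measure is $\abs{\mu} = \sum_{\lambda \in \Lambda} \abs{c_\lambda}\,\delta_\lambda$ (this is the standard Jordan/Hahn decomposition of a discrete complex measure, obtained by taking $|c_\lambda|$ atoms). Evaluating at $\Rdst$ directly yields $\sum_{\lambda}\abs{c_\lambda} = \norm{c}_1$.

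Next, for the lower bound $\norm{c}_\infty \leq \norm{\mu}_\wmes$, I would fix an arbitrary $\lambda_0 \in \Lambda$ and take $x = \lambda_0$ in the definition of $\norm{\mu}_\wmes$. Since $\lambda_0 \in B_1(\lambda_0)$, we have $\abs{\mu}(B_1(\lambda_0)) \geq \abs{c_{\lambda_0}}$, so the supremum over $x \in \Rdst$ is at least $\abs{c_{\lambda_0}}$. Taking the supremum over $\lambda_0 \in \Lambda$ gives $\norm{c}_\infty \leq \norm{\mu}_\wmes$.

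For the upper bound $\norm{\mu}_\wmes \lesssim \rel(\Lambda)\norm{c}_\infty$, for any $x \in \Rdst$ I estimate
\[
\abs{\mu}(B_1(x)) = \sum_{\lambda \in \Lambda \cap B_1(x)} \abs{c_\lambda}
\leq \norm{c}_\infty \cdot \#(\Lambda \cap B_1(x))
\leq \norm{c}_\infty \cdot \rel(\Lambda),
\]
using the definition \eqref{eq_rel} of $\rel(\Lambda)$. Taking the supremum over $x$ finishes the proof. There is no real obstacle here; the lemma is essentially a bookkeeping exercise that records how the two natural norms on a discrete measure compare with the $\ell^1$ and $\ell^\infty$ norms of its coefficient sequence, weighted by the relative separation constant of $\Lambda$.
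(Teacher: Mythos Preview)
Your proof is correct and follows essentially the same approach as the paper: the paper phrases the second estimate as the measure sandwich $\abs{c_\lambda}\,\delta_\lambda \leq \abs{\mu} \leq \norm{c}_\infty\,\Shah_\Lambda$ and then invokes \eqref{eq_mes_am}, whereas you unpack this directly at the level of the $\wmes$-norm on unit balls. The arguments are equivalent, and your direct computation even yields the upper bound with implicit constant $1$.
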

\begin{proof}
The identity  $\abs{\mu}(\Rdst) = \norm{c}_1$ is elementary. The estimate for $\norm{\mu}_{\wmes}$
follows from the fact that, for all $\lambda \in \Lambda$, 
$\abs{c_\lambda} \delta_\lambda \leq \abs{\mu} \leq \norm{c}_\infty \Shah_\Lambda$, where
$\Shah_\Lambda$ is defined by~\eqref{eq_shah}.
\end{proof}

\section{Gabor Frames and  Gabor Riesz Sequences without Inequalities}
\label{sec_char}
As a first step towards the main results, we  characterize frames and Riesz bases in terms of uniqueness properties for certain limit 
sequences. The corresponding results for lattices have been derived by
different methods in \cite{gr07-2}. For the proofs we  combine Theorem
\ref{th_main_mol} with Beurling's methods~\cite[p.351-365]{be89}. 
              
For a relatively separated set $\Lambda \subseteq \Rtdst$, let $W(\Lambda)$ be the set of weak limits of the 
translated sets $\Lambda+z, z\in \Rtdst$,  i.e.,  $\Lambdastar \in
W(\Lambda)$ if there exists a sequence 
$\sett{z_n:n \in \Nst}$ such that $\Lambda+z_n \weakconv \Lambdastar$. It is easy to see that then $\Gamma$ is
always relatively separated.
When $\Lambda$ is a lattice, i.e.,  $\Lambda=A \Ztdst$ for an invertible real-valued $2d\times2d$-matrix $A$, then  
$W(\Lambda)$ consists only of translates of $\Lambda$. 
 
Throughout this section we use repeatedly the following special case
of Lemma \ref{lemma_compactness}(b,c): given a
relatively separated set $\Lambda 
\subseteq \Rtdst$ and any sequence of points $\sett{z_n: n \geq 1} \subseteq \Rtdst$, there is a subsequence
$\sett{z_{n_k}: k \geq 1}$ and a relatively separated set $\Lambdastar \subseteq \Rtdst$ such that $\Lambda + z_{n_k}
\weakconv \Lambdastar$.

\subsection{Characterization of frames}
 In this section we characterize the frame property of Gabor systems in terms of
the sets in $W(\Lambda)$.

\begin{theo}
\label{th_char_frame} 
Assume that $g\in M^1(\Rdst)$ and that $\Lambda \subseteq \Rtdst $ is
relatively separated. Then the following are equivalent.
\begin{itemize}
\item[(i)] $\gab$ is a frame for $\ltrd$. 

\item[(ii)]  $\gab$ is a $p$-frame for $M^p(\Rdst)$ for some  $p \in
  [1,\infty]$ (for all $p\in [1,\infty ]$).

\item[(iii)] $\gab$ is an $\infty$-frame for $M^{\infty}(\Rdst)$. 

\item[(iv)] $C_{g,\Lambda }^*$ is surjective from     $\ell  ^1(\Lambda
    )$   onto $M^1(\rd )$. 

\item[(v)] $C_{g,\Lambdastar}$ is bounded below on $M^\infty(\Rdst) $ for 
every weak limit $\Lambdastar \in W(\Lambda )$. 

\item[(vi)] $C_{g,\Lambdastar}$ is one-to-one on $M^\infty(\Rdst) $ for every 
weak limit $\Lambdastar \in W(\Lambda)$.
\end{itemize}
\end{theo}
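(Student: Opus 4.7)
My plan is to prove the six-way equivalence as follows. The equivalences (i)$\Leftrightarrow$(ii)$\Leftrightarrow$(iii) are an instance of Corollary \ref{coro_main_mol}, which says that the $p$-frame property for a set of time-frequency molecules is independent of $p$. The equivalence (iii)$\Leftrightarrow$(iv) is Banach-space duality: $C^*_{g,\Lambda}\colon\ell^1(\Lambda)\to M^1(\Rdst)$ is the pre-adjoint of $C_{g,\Lambda}\colon M^\infty(\Rdst)\to\ell^\infty(\Lambda)$, so surjectivity of the former is equivalent to the latter being bounded below. Since $\Lambda\in W(\Lambda)$ (take $z_n\equiv 0$), the implications (v)$\Rightarrow$(iii) and (v)$\Rightarrow$(vi) are immediate. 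The substantive work therefore concentrates in (iii)$\Rightarrow$(v) and (vi)$\Rightarrow$(iii).

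For (iii)$\Rightarrow$(v), I descend to $p=1$ via Corollary \ref{coro_main_mol} and transfer the frame inequality through the weak limit before re-ascending. The $p=1$ bound $\|f\|_{M^1}\lesssim\sum_{\lambda\in\Lambda}|V_gf(\lambda)|$ combined with the covariance relation \eqref{eq_cov_c} yields $\|f\|_{M^1}\lesssim \int|V_gf|\,d\Shah_{\Lambda+z_n}$, uniformly in $n$. The measures $\Shah_{\Lambda+z_n}$ are uniformly bounded in $\wmes$ by \eqref{eq_mes_am}; by Banach--Alaoglu together with Lemma \ref{lemma_compactness}, a subsequence converges in $\sigma(\wmes,\wtest)$ to a discrete measure $\mu=\sum_{\gamma\in\Gamma}m_\gamma\delta_\gamma$ with $\supp(\mu)=\Gamma$ and $m_\gamma\leq\rel(\Lambda)$. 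Since $V_gf\in\wtest$ for $f\in M^1$ by Lemma \ref{lemma_stft}(a), the integrals pass to the limit, producing $\|f\|_{M^1}\lesssim\int|V_gf|\,d\mu\leq \rel(\Lambda)\sum_{\gamma\in\Gamma}|V_gf(\gamma)|$. A final application of Corollary \ref{coro_main_mol} to the system $\mathcal{G}(g,\Gamma)$ (whose index set is relatively separated by Lemma \ref{lemma_compactness}(c)) upgrades this $p=1$ bound to the $\infty$-frame inequality, which is (v).

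For (vi)$\Rightarrow$(iii), the heart of the proof, I use a Beurling-style compactness argument. If (iii) fails there exist $f_k\in M^\infty$ with $\|f_k\|_{M^\infty}=1$ and $\|C_{g,\Lambda}f_k\|_{\ell^\infty}\to 0$. Pick $w_k\in\Rtdst$ with $|V_gf_k(w_k)|\geq \tfrac12$ and set $\tilde f_k:=\pi(-w_k)f_k$. By \eqref{eq_tf_stft}, $|V_g\tilde f_k(0)|\geq\tfrac12$ and $\sup_{\mu\in\Lambda-w_k}|V_g\tilde f_k(\mu)|=\|C_{g,\Lambda}f_k\|_{\ell^\infty}\to 0$. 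Extracting subsequences via Banach--Alaoglu and Lemma \ref{lemma_compactness}(b), one obtains $\tilde f_k\to f$ in $\sigma(M^\infty,M^1)$ with $|V_gf(0)|\geq\tfrac12$ (so $f\ne 0$), and $\Lambda-w_k\weakconv\Gamma''\in W(\Lambda)$. For each $\gamma\in\Gamma''$, Lemma \ref{lemma_alt_weak} supplies $\mu_k\in\Lambda-w_k$ with $\mu_k\to\gamma$; the continuity statement following Lemma \ref{lemma_stft} then gives $V_g\tilde f_k(\mu_k)\to V_gf(\gamma)$, forcing $V_gf(\gamma)=0$. Hence $C_{g,\Gamma''}f=0$ with $f\ne 0$, contradicting (vi).

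The main obstacle I anticipate is (iii)$\Rightarrow$(v). A naive attempt to transport the $\infty$-frame inequality directly through a weak limit breaks down, because $V_gf$ carries no decay at infinity when $f\in M^\infty$; the key trick is to first descend to $p=1$, where the decay $V_gf\in\wtest$ allows the weak-$*$ convergence of the point measures $\Shah_{\Lambda+z_n}$ to carry the inequality across, and then re-enter the $\infty$-regime via Corollary \ref{coro_main_mol}. A secondary technical nuisance is that weak limits of merely relatively separated sets can carry point-multiplicities in the limiting measure $\mu$, which must be absorbed using the uniform estimate $m_\gamma\le\rel(\Lambda)$.
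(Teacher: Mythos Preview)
Your proof is correct, and the implications (i)$\Leftrightarrow$(ii)$\Leftrightarrow$(iii)$\Leftrightarrow$(iv), (v)$\Rightarrow$(vi), and the Beurling-style argument for (vi)$\Rightarrow$(iii) match the paper essentially verbatim.

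The one substantive difference is your route to (v). The paper proves (iv)$\Rightarrow$(v) on the \emph{synthesis} side: given $f\in M^1$, the uniform surjectivity of $C^*_{g,\Lambda-z_n}$ produces coefficients $c^n\in\ell^1(\Lambda-z_n)$ with $f=C^*_{g,\Lambda-z_n}c^n$ and $\|c^n\|_1\lesssim 1$; the associated complex measures $\mu_n=\sum c^n_\lambda\delta_\lambda$ are bounded in total variation, so a subsequence converges in $\sigma(\mathcal{M},C_0)$ to a measure supported on $\Gamma$, yielding $f=C^*_{g,\Gamma}c$ and hence surjectivity of $C^*_{g,\Gamma}:\ell^1\to M^1$; duality then gives (v). You instead stay on the \emph{analysis} side: you descend from (iii) to the $1$-frame inequality for $\Lambda$ via Corollary~\ref{coro_main_mol}, transport it through the weak limit by testing the positive counting measures $\Shah_{\Lambda+z_n}$ against $|V_gf|\in\wtest$, and then re-ascend to $p=\infty$ by a second application of Corollary~\ref{coro_main_mol}, this time to $\mathcal{G}(g,\Gamma)$. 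Both arguments hinge on the same weak$^*$ compactness idea, but yours pays for its directness by invoking the $p$-stability corollary twice (once for $\Lambda$, once for $\Gamma$), whereas the paper's synthesis argument needs it only for the initial equivalences. Your handling of the multiplicities $m_\gamma\le\rel(\Lambda)$ in the limiting measure is exactly the right bookkeeping, and the observation that $|V_gf|\in\wtest$ is what makes the passage to the limit legitimate.
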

\begin{rem}
{\rm 1. When $\Lambda$ is a lattice, then $W(\Lambda)$ consists only of translates of
$\Lambda$. In this case,  Theorem \ref{th_char_frame} reduces to main result 
in \cite{gr07-2}. }  \\
{\rm 2. For related work in the context of sampling measures see also
  \cite{as13}.
}
\end{rem}
\begin{proof}
The equivalence of (i), (ii) and (iii) follows immediately from Corollary \ref{coro_main_mol}.

In the sequel  we will use several times the following version of the
closed range theorem~\cite[p.~166]{conway90}: Let $T:X\to Y$ be a
bounded operator  between two Banach spaces $X$ and $Y$. Then $T$ is
onto $Y$,  \fif\ $T^*: Y^* \to X^*$ is one-to-one on $Y^*$ and has closed range in
$X^*$, \fif\  $T^*$ is  bounded below.

Conditions  (iii) and   (iv) are equivalent by applying the closed
range theorem to the synthesis operator $C^*_{g,\Lambda }$ on $\ell
^1(\Lambda )$.

For the remaining equivalences we adapt Beurling's methods. 

{\bf (iv) $\Rightarrow$ (v)}. Consider a convergent sequence of translates
$\Lambda - z_n \weakconv \Lambdastar$. Since $C_{g,\Lambda }^*$
maps $\ell ^1(\Lambda )$ onto $M^1(\rd )$, because of \eqref{eq_cov_c}
and the open mapping theorem,
the synthesis operators
$C_{g,\Lambda -z_n}^*$ are also onto $M^1(\Rdst)$ with bounds on preimages
independent of $n$. Thus for every $f\in M^1(\rd )$ 
there exist sequences $\sett{c^n_\lambda}_{\lambda \in \Lambda - z_n}$ with $\norm{c^n}_1
\lesssim 1$ such that
\begin{align*}
f = \sum_{\lambda \in \Lambda - z_n} c^n_\lambda \pi(\lambda) g,
\end{align*}
with convergence in $M^1(\Rdst)$.

Consider the measures $\mu_n := \sum_{\lambda \in \Lambda - z_n} c^n_\lambda \delta_{\lambda}$. 
Note that $\norm{\mu_n} = \norm{c^n}_1 \lesssim 1$. By passing to a
subsequence we may assume that $\mu_n \longrightarrow \mu$ in
$\sigma(\mathcal{M},C_0)$ 
for some measure $\mu \in \mathcal{M}(\Rtdst)$.

By assumption  $\supp(\mu_n) \subseteq \Lambda - z_n$, $\Lambda - z_n
\weakconv \Lambdastar$, 
and $\Lambdastar$ is  relatively separated and thus closed.   
It follows from Lemma \ref{lemma_mes_supp_a} that  $\supp(\mu) \subseteq \Lambdastar$. Hence,
\begin{align*}
\mu = \sum_{\lambda \in \Lambdastar} c_\lambda \delta_{\lambda} 
\end{align*}
for some sequence $c$. In addition,
$\norm{c}_1 = \norm{\mu} \leq \liminf_n \norm{\mu_n} \lesssim 1$.
Let $f' := \sum_{\lambda \in \Lambdastar} c_\lambda \pi(\lambda)
g$. This is well-defined in $M^1(\rd )$,  because $c \in
\ell^1(\Lambdastar)$. Let $z\in\Rtdst$. Since by Lemma \ref{lemma_stft}
$V_g \pi(z) g \in \wtest(\Rtdst) \subseteq C_0(\Rtdst)$ we can compute
\begin{align*}
\ip{f}{\pi(z)g} &= \sum_{\lambda \in \Lambda - z_n} c^n_\lambda \overline{V_g \pi(z) g(\lambda)}
\\
&=\int_{\Rtdst} \overline{V_g \pi(z) g} \,d\mu_n
\longrightarrow
\int_{\Rtdst} \overline{V_g \pi(z) g} \,d\mu=
\ip{f'}{\pi(z)g}.
\end{align*}
(Here, the interchange of summation and integration is justified because $c$ and $c^n$ are summable.)
Hence $f=f'$ and thus $C^*_{g,\Gamma}: \ell^1(\Gamma) \to M^1(\Rdst)$ is surjective. By
duality $C_{g,\Gamma }$ is one-to-one from $M^\infty (\rd )$ to $\ell
^\infty (\Gamma )$ and has closed range, whence $C_{g,\Gamma }$ is
bounded below on $M^\infty (\infty )$.  

{\bf (v) $\Rightarrow$ (vi)} is clear.

{\bf (vi) $\Rightarrow$ (iii).} Suppose $\gab$ is not an $\infty$-frame for $M^\infty(\Rdst)$. Then
there exists a sequence of functions $\sett{f_n: n \geq 1} \subset M^\infty(\Rdst)$ such that
$\norm{V_g f_n}_\infty=1$ and $\sup_{\lambda \in \Lambda} \abs{V_g f_n (\lambda)} \longrightarrow 0$. Let $z_n \in
\Rtdst$ be such that $\abs{V_g f_n(z_n)} \geq 1/2$ and consider $h_n := \pi(-z_n)f_n$. By passing to a subsequence
we may assume that $h_n \longrightarrow h$ in $\sigma(M^\infty, M^1)$ for some $h \in M^\infty(\Rdst)$, and
that $\Lambda - z_n \weakconv \Gamma$ for some relatively separated $\Gamma$. Since
$\abs{V_g h_n (0)}=\abs{V_g f_n (z_n)} \geq 1/2$
by~\eqref{eq_tf_stft},  it follows from Lemma \ref{lemma_stft} (b)
that $h \not= 0$. Given $\gamma
\in \Gamma$, there exists  a sequence $\sett{\lambda_n: n \geq 1}
\subseteq \Lambda$ such that $\lambda_n - z_n \longrightarrow \gamma$. Since,
by Lemma \ref{lemma_stft}, $V_g h_n \longrightarrow V_g h$ uniformly on compact sets, 
we can use \eqref{eq_tf_stft} to obtain that
\begin{align*}
\abs{V_g h(\gamma)} = \lim_n \abs{V_g h_n(\lambda_n - z_n)} = \lim_n \abs{V_g f_n(\lambda_n)} = 0.
\end{align*}
As $\gamma \in \Gamma $ is arbitrary, this contradicts (vi).
\end{proof}

Although Theorem~\ref{th_char_frame} seems to be purely qualitative,
it can be used to derive quantitative estimates for Gabor frames. 
We fix a non-zero window  $g$ in $ M^1(\rd )$ and assume that
$\|g\|_2=1$. We measure the modulation space norms with respect to
this window  by $\|f\|_{M^p} = \|V_g f\|_p$ and observe that 
the isometry property of the short-time Fourier transform extends to
$M^\infty (\rd )$ as follows: if $f\in M^\infty (\rd ) $ and $h\in
M^1(\rd )$, then 
\begin{equation}
  \label{eq:c10}
  \langle f, h\rangle = \intrdd V_gf(z) \overline{V_gh(z)} \, dz =
  \langle V_gf, V_gh\rangle.
\end{equation}
 For $\delta >0$, we  define the
$M^1$-modulus of continuity of $g$ as 
\begin{equation}
\label{eq:c9}
\omega _\delta (g)_{M^1} = \sup_{\stackrel{z,w \in \rdd}{|z-w|\leq \delta }}
\|\pi (z)g- \pi (w)g\|_{M^1}
=\sup_{\stackrel{z,w \in \rdd}{|z-w|\leq \delta }} \norm{V_g(\pi (z)g- \pi (w)g)}_{L^1}.
\end{equation}
It is easy to verify that $\lim _{\delta \to 0+}   \omega _\delta
(g)_{M^1} = 0$, because time-frequency shifts are continuous on $M^1(\rd )$. 

Then we deduce the following quantitative condition for Gabor frames
from Theorem~\ref{th_char_frame}. 

\begin{coro}
  For $g\in M^1(\rd )$ with $\norm{g}_2=1$ choose $\delta >0$ so that $  \omega
  _\delta (g)_{M^1} <1$.

If  $\Lambda \subseteq \rdd $ is relatively
  separated and  $\rho (\Lambda ) \leq \delta $, then $\gab $ is a frame
  for $\lrd $. 
\end{coro}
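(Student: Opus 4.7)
The plan is to apply the characterization of frames in Theorem~\ref{th_char_frame}: it suffices to verify condition (vi), namely that $C_{g,\Gamma}$ is one-to-one on $M^\infty(\rd)$ for every weak limit $\Gamma \in W(\Lambda)$. This reduces the proof to two steps: first, propagate the hole condition $\rho(\Lambda) \leq \delta$ to every weak limit; second, use the smallness of the hole together with the $M^1$-modulus of continuity of $g$ to force injectivity.

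For the first step, let $\Gamma \in W(\Lambda)$, so $\Lambda+z_n\weakconv\Gamma$ for some sequence $(z_n)$. Fix $x\in\rdd$ and $\varepsilon>0$. Since $\rho(\Lambda+z_n)=\rho(\Lambda)\le\delta$, there is $\lambda_n\in\Lambda+z_n$ with $|\lambda_n-x|\le\delta$; in particular $\lambda_n\in B_R(0)$ for any fixed $R>|x|+\delta$. Lemma~\ref{lemma_alt_weak} then gives $\gamma_n\in\Gamma$ with $|\gamma_n-\lambda_n|\le\varepsilon$ for $n$ large, hence $|\gamma_n-x|\le\delta+\varepsilon$. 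Because $\Gamma$ is relatively separated (a weak limit of translates of a relatively separated set), only finitely many points of $\Gamma$ lie in $\bar B_{\delta+1}(x)$, so letting $\varepsilon\to 0$ along a subsequence produces a single $\gamma\in\Gamma$ with $|\gamma-x|\le\delta$. Thus $\rho(\Gamma)\le\delta$.

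For the second step, fix $\Gamma\in W(\Lambda)$ with $\rho(\Gamma)\le\delta$, and suppose $f\in M^\infty(\rd)$ satisfies $V_gf(\gamma)=0$ for all $\gamma\in\Gamma$. Given any $z\in\rdd$, choose $\gamma\in\Gamma$ with $|z-\gamma|\le\delta$. Using the duality pairing \eqref{eq:c10} and the bound $|\langle f,h\rangle|\le\|f\|_{M^\infty}\|h\|_{M^1}$ for $h\in M^1$, together with the definition \eqref{eq:c9} of $\omega_\delta(g)_{M^1}$,
\begin{align*}
|V_gf(z)|=|V_gf(z)-V_gf(\gamma)|
=|\langle f,\pi(z)g-\pi(\gamma)g\rangle|
\le \|f\|_{M^\infty}\,\omega_\delta(g)_{M^1}.
\end{align*}
Taking the supremum over $z$ yields $\|f\|_{M^\infty}\le\omega_\delta(g)_{M^1}\,\|f\|_{M^\infty}$, and since $\omega_\delta(g)_{M^1}<1$ this forces $f=0$. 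Thus $C_{g,\Gamma}$ is injective on $M^\infty(\rd)$.

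Combining both steps, condition (vi) of Theorem~\ref{th_char_frame} is satisfied, so $\gab$ is a frame for $\ltrd$. The only non-routine step is the hole-preservation under weak limits; the rest is a direct consequence of the duality pairing and the definition of the modulus of continuity.
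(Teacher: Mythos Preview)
Your proof is correct and follows essentially the same approach as the paper: both verify condition (vi) of Theorem~\ref{th_char_frame} by showing that the hole bound $\rho\leq\delta$ passes to every weak limit and then using the estimate $\abs{V_gf(z)-V_gf(\gamma)}\leq\norm{f}_{M^\infty}\,\omega_\delta(g)_{M^1}$. The only differences are cosmetic---you argue directly by taking the supremum over $z$ while the paper frames it as a contradiction, and you spell out the hole-preservation step that the paper simply asserts.
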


\begin{proof}
We argue by contradiction and assume that $\gab $ is not a frame. By
condition (vi) of Theorem~\ref{th_char_frame} there exists a weak
limit $\Gamma \in W(\Lambda )$ and non-zero $f\in M^\infty (\rd )$, such that 
$V_gf \big| _\Gamma = 0$. Since $\rho (\Lambda ) \leq \delta $, we also
have $\rho (\Gamma )\leq  \delta $. By normalizing, we may
assume that $\norm{f}_{M^\infty } = \norm{V_g f}_\infty = 1$. For
$0< \epsilon < 1 - \omega _\delta (g)_{M^1}$ we find $z\in \rdd $
such that $|V_gf(z)| = |\langle f, \pi (z)g\rangle | > 1-\epsilon
$. By Lemma \ref{lemma_compactness}, $\Gamma$ is relatively separated and, in particular,
closed. Since $\rho (\Gamma ) \leq \delta $, there is a $\gamma \in \Gamma
$ such that $|z-\gamma | \leq \delta $. 
Consequently, since $V_gf \big| _\Gamma = 0$, we find that 
\begin{align*}
  1-\epsilon &< |\langle f, \pi (z)g\rangle - \langle f, \pi (\gamma
  )g\rangle | = |\langle f, \pi (z)g-  \pi (\gamma
  )g\rangle | \\
& = |\langle V_g f, V_g( \pi (z)g-  \pi (\gamma
  )g)\rangle | \\
&\leq \|V_gf \|_\infty \, \|V_g( \pi (z)g-  \pi (\gamma
  )g)\|_1 \\
&= \|f\|_{M^\infty } \, \|\pi (z)g-  \pi (\gamma
  )g\|_{M^1} \\
&\leq \omega _\delta (g)_{M^1} \, .
\end{align*}
Since we have chosen $1-\epsilon > \omega _\delta (g)_{M^1} $, we have
arrived at a contradiction. Thus $\gab $ is a frame. 
 \end{proof}

This theorem is analogous to Beurling's famous  sampling
theorem for  multivariate bandlimited functions~\cite{beurling66}. 
  The proof is in the style of ~\cite{OU12}.  

\subsection{Characterization of Riesz sequences} 
We now derive analogous results for Riesz sequences. 
\begin{theo}
\label{th_char_riesz}
Assume that $g\in M^1(\Rdst)$ and that $\Lambda \subseteq \Rtdst$ is
separated. Then the following are equivalent.
\begin{itemize}
\item[(i)] $\gab$ is a Riesz sequence in  $\ltrd$.

\item[(ii)] $\gab$ is a $p$-Riesz sequence in  $M^p(\rd )$ for some 
  $p\in [1,\infty ]$ (for all  $p\in [1,\infty ]$).

\item[(iii)] $\gab$ is an $\infty$-Riesz sequence in  $M^\infty(\rd )$,
  i.e., $C^* _{g,\Lambda} : \ell ^\infty (\Lambda) \to
  M^\infty(\Rdst)$ is bounded below. 
\item[(iv)] $C_{g,\Lambda} : M^1 \to \ell^1(\Lambda)$ is surjective.

\item[(v)] $C^* _{g,\Lambdastar } : \ell ^\infty (\Lambdastar) \to M^\infty(\Rdst)$ is bounded below for every
weak limit $ \Lambdastar \in W(\Lambda )$. 
\item[(vi)] $C^* _{g,\Lambdastar} : \ell ^\infty (\Lambdastar) \to M^\infty(\Rdst)$ is one-to-one for every
weak limit $ \Lambdastar \in W(\Lambda )$. 
\end{itemize}
\end{theo}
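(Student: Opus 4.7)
The overall strategy is to mirror the proof of Theorem~\ref{th_char_frame}, exchanging the roles of the analysis operator $C_{g,\Lambda}$ and the synthesis operator $C^*_{g,\Lambda}$ as dictated by the duality between frames and Riesz sequences. The equivalences (i) $\Leftrightarrow$ (ii) $\Leftrightarrow$ (iii) are a direct application of the stability of $p$-Riesz sequences of time-frequency molecules, i.e., Corollary~\ref{coro_main_mol}(b). The equivalence (iii) $\Leftrightarrow$ (iv) follows from the closed range theorem: the operator $C_{g,\Lambda}: M^1 \to \ell^1$ has adjoint $C^*_{g,\Lambda}: \ell^\infty \to M^\infty$ under the dualities $(\ell^1)^* = \ell^\infty$ and $(M^1)^* = M^\infty$, so $C_{g,\Lambda}$ is onto if and only if $C^*_{g,\Lambda}$ is bounded below. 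The implication (v) $\Rightarrow$ (vi) is trivial.

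For (iv) $\Rightarrow$ (v), fix a weak limit $\Gamma$ with $\Lambda - z_n \weakconv \Gamma$ and, by the above closed range argument, aim to show $C_{g,\Gamma}: M^1 \to \ell^1(\Gamma)$ is surjective. Via covariance~\eqref{eq_cov_c} and the open mapping theorem, each $C_{g,\Lambda - z_n}$ is surjective onto $\ell^1(\Lambda - z_n)$ with a uniform bound on preimages. Since $\Lambda$ is separated, so are all translates and weak limits with the same parameter, hence for every $R>0$ and every sufficiently large $n$ there is a canonical bijection between $\Gamma \cap B_R$ and nearby points of $\Lambda - z_n$. Given $c \in \ell^1(\Gamma)$, build ``pull-backs'' $c^n \in \ell^1(\Lambda - z_n)$ along these matchings, truncated to a slowly growing window of radius $R(n) \to \infty$, and pick preimages $f^n \in M^1$ with $\|f^n\|_{M^1} \lesssim \|c\|_1$. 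Extract a subsequential weak$^*$ limit $f \in M^1$ in $\sigma(M^1, \modzero)$ by Banach--Alaoglu; Lemma~\ref{lemma_stft}(c) gives $V_g f^n \to V_g f$ uniformly on compact sets, and evaluating at the matched points $\lambda_n(\gamma) \to \gamma$ yields $\langle f, \pi(\gamma) g \rangle = \lim_n c^n_{\lambda_n(\gamma)} = c_\gamma$ for every $\gamma \in \Gamma$, so $C_{g,\Gamma} f = c$.

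For (vi) $\Rightarrow$ (iii), I argue by contradiction. If $C^*_{g,\Lambda}$ is not bounded below on $\ell^\infty(\Lambda)$, pick $c^n \in \ell^\infty(\Lambda)$ with $\|c^n\|_\infty = 1$ and $\|C^*_{g,\Lambda} c^n\|_{M^\infty} \to 0$, and select $\lambda_n \in \Lambda$ with $|c^n_{\lambda_n}| \geq 1/2$. The covariance identity~\eqref{eq_cov_c} gives $\pi(-\lambda_n) C^*_{g,\Lambda} c^n = C^*_{g,\Lambda - \lambda_n} \twistshift(-\lambda_n) c^n$, so, setting $\tilde c^n := \twistshift(-\lambda_n) c^n$, one has $\|\tilde c^n\|_\infty = 1$, $|\tilde c^n_0| \geq 1/2$, and $\|C^*_{g,\Lambda - \lambda_n} \tilde c^n\|_{M^\infty} \to 0$. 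Passing to a subsequence, $\Lambda - \lambda_n \weakconv \Gamma$ with $\Gamma$ separated (Lemma~\ref{lemma_compactness}) and at most countable, and, via the matching bijection, $\tilde c^n$ converges pointwise to some $c \in \ell^\infty(\Gamma)$ with $|c_0| \geq 1/2$, so $c \neq 0$.

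The main obstacle is to show $C^*_{g,\Gamma} c = 0$, contradicting the injectivity assumed in (vi). I pass to short-time Fourier transforms: for each $z \in \Rtdst$,
\begin{equation*}
V_g\bigl(C^*_{g,\Lambda - \lambda_n} \tilde c^n\bigr)(z) = \sum_{\mu \in \Lambda - \lambda_n} \tilde c^n_\mu\, V_g(\pi(\mu) g)(z).
\end{equation*}
Because $|V_g(\pi(\mu) g)(z)| = |V_g g(z - \mu)|$ with $V_g g \in W(C_0, L^1)$ by Lemma~\ref{lemma_stft}(a), and because the sets $\Lambda - \lambda_n$ have uniformly bounded relative separation, the sampling inequality~\eqref{eq:c7} provides an $n$-independent, summable dominating bound. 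Splitting into a large ball $B_R(z)$ and its tail, the finite part converges termwise along the matching (using continuity of $V_g g$ and the pointwise convergence $\tilde c^n_\mu \to c_\gamma$), while the tail is uniformly small in $R$. Hence $V_g(C^*_{g,\Lambda - \lambda_n} \tilde c^n)(z) \to V_g(C^*_{g,\Gamma} c)(z)$ pointwise. Since the left-hand side already tends to $0$ in $M^\infty$, and hence in $L^\infty$, the limit vanishes; thus $C^*_{g,\Gamma} c = 0$ in $M^\infty$, contradicting (vi) and completing the proof.
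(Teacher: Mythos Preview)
Your argument is correct and the overall architecture matches the paper's, but the two substantive implications are handled by genuinely different devices.

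For (iv) $\Rightarrow$ (v), the paper does not pull back a general $c \in \ell^1(\Gamma)$. Instead it fixes $\lambda \in \Gamma$, uses the uniform surjectivity of $C_{g,\Lambda-z_n}$ to produce \emph{interpolating functions} $h_n \in M^1$ with $V_g h_n(\lambda_n - z_n) = 1$ and $V_g h_n \equiv 0$ on the rest of $\Lambda - z_n$, and passes to a weak limit $h_\lambda$ with $V_g h_\lambda(\lambda)=1$, $V_g h_\lambda|_{\Gamma\setminus\{\lambda\}}=0$, and uniformly bounded $M^1$-norm. A general $c$ is then hit by $f=\sum_\lambda c_\lambda h_\lambda$. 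Your truncation-and-pull-back scheme is equally valid; it avoids building a dual system but requires you to manage the growing window $R(n)$ and the matching carefully (which you do).

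For (vi) $\Rightarrow$ (iii), the paper isolates the core step as a separate lemma (Lemma~\ref{lemma_zero_seq}) phrased in terms of \emph{measures}: one sets $\mu_n = \sum_\lambda \theta_{\lambda,n} c^n_\lambda \delta_{\lambda-\lambda_n} \in \wmes$, uses weak$^*$ compactness of the ball in $\wmes$ to extract a limit $\mu$, applies Lemma~\ref{lemma_mes_supp_a} to get $\supp(\mu)\subseteq\Gamma$, and then tests against $\overline{V_g\pi(z)g} \in \wtest$ to conclude. This packaging avoids the explicit point-matching, the diagonal extraction of $c$, and the boundary bookkeeping in your ``finite part converges termwise'' step; more importantly, Lemma~\ref{lemma_zero_seq} is stated for an arbitrary uniformly separated sequence $\{\Lambda_n\}$, not just $\Lambda_n=\Lambda$, and is reused verbatim in the proof of the deformation theorem (Theorem~\ref{th_per_riesz}). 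Your hands-on argument gets the present theorem with less machinery, but the paper's investment in the measure-theoretic lemma pays off later.
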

\begin{rem}{\rm (i)
Note that we are assuming that $\Lambda$ is separated. This is necessarily the case
if $\gab$ is a Riesz sequence (Lemma \ref{lemma_set_must_be}), but
needs to be assumed in some of  the other conditions.

(ii) For bandlimited functions Beurling \cite[Problem 3, p. 359]{be89} asked whether a characterization analogous to
(iii) $\Leftrightarrow$ (vi) in Theorem~\ref{th_char_riesz} holds for interpolating sequences. 
Note that in the context of bandlimited functions, the properties corresponding to (i) and (ii) are not equivalent.
}
\end{rem}
Before proving Theorem \ref{th_char_riesz}, we prove the following
continuity property of $C_{g,\Lambda }^*$ with respect to $\Lambda$. 
\begin{lemma}
\label{lemma_zero_seq}
Let $g\in M^1(\Rdst), g\neq 0$,  and let $\sett{\Lambda_n: n \geq 1}$ be
a sequence of uniformly separated subsets of $\Rtdst$,  i.e.,
\begin{equation} \label{unifsep}
\inf_n \sep(\Lambda_n) =\delta > 0\, .
\end{equation}
For every  $n \in \Nst$, let $c^n \in \ell^\infty(\Lambda_n)$ be such that $\norm{c^n}_\infty=1$ and
suppose that
\begin{align*}
\sum_{\lambda \in \Lambda_n} c^n_\lambda \pi(\lambda) g \longrightarrow 0
\mbox{ in } M^\infty(\Rdst), \qquad \mbox{as } n \longrightarrow \infty.
\end{align*}
Then there exist a subsequence $(n_k ) \subset \bN $,
points $\lambda_{n_k} \in \Lambda_{n_k}$, a separated set $\Gamma
\subseteq \Rtdst$,  and a
non-zero sequence $c \in \ell^\infty(\Gamma)$ such that
\begin{align*}
\Lambda_{n_k}-\lambda_{n_k} & \weakconv \Gamma,
\qquad \mbox{as } k \longrightarrow \infty \\
\text{and} \qquad \quad  
\sum_{\lambda \in \Gamma} c_\lambda \pi(\lambda) g &= 0.
\end{align*}
\end{lemma}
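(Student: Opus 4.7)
The plan is to renormalize by translation so that a coefficient of modulus at least $1/2$ sits at the origin, then extract weak limits of both the sets $\Lambda_n$ and the coefficient sequences $c^n$ (viewed as measures in $\wmes$), and finally pass to the limit in $V_g h_n \to 0$ using that the relevant short-time Fourier transforms lie in $\wtest$.

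First I would choose $\lambda_n \in \Lambda_n$ with $\abs{c^n_{\lambda_n}} \geq 1/2$ and set $\Lambda'_n := \Lambda_n - \lambda_n$. The covariance property~\eqref{eq_cov_c} then yields $h_n := \pi(-\lambda_n) \sum_{\lambda \in \Lambda_n} c^n_\lambda \pi(\lambda) g = \sum_{\mu \in \Lambda'_n} d^n_\mu \pi(\mu) g$, where $d^n := \twistshift(-\lambda_n) c^n$ satisfies $\abs{d^n_\mu} = \abs{c^n_{\mu + \lambda_n}}$, so $\norm{d^n}_\infty = 1$ and $\abs{d^n_0} \geq 1/2$. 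Since time-frequency shifts are $M^\infty$-isometries, $h_n \to 0$ in $M^\infty$.

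Next I would extract geometric and coefficient limits in parallel. Translation preserves separation, so $\sep(\Lambda'_n) \geq \delta$ and $\sup_n \rel(\Lambda'_n) < \infty$. By Lemma~\ref{lemma_compactness}(b), along a subsequence $\Lambda'_n \weakconv \Gamma$ for some separated $\Gamma$ with $\sep(\Gamma) \geq \delta$, and $0 \in \Gamma$ because $0 \in \Lambda'_n$ for every $n$ and $\Gamma$ is closed. Form the measures $\mu_n := \sum_{\mu \in \Lambda'_n} d^n_\mu \delta_\mu$; by Lemma~\ref{lemma_norms_mes} the norms $\norm{\mu_n}_\wmes \lesssim \rel(\Lambda'_n)$ are uniformly bounded, so by weak* compactness in $\wmes = \wtest^*$, a further subsequence satisfies $\mu_n \to \mu$ in $\sigma(\wmes, \wtest)$. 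Lemma~\ref{lemma_mes_supp_a} gives $\supp(\mu) \subseteq \Gamma$, and since $\Gamma$ is discrete, $\mu = \sum_{\gamma \in \Gamma} c_\gamma \delta_\gamma$ for some $c \in \ell^\infty(\Gamma)$. To see $c \neq 0$, take $\phi \in \wtest$ supported in $B_{\delta/2}(0)$ with $\phi(0) = 1$; by $\delta$-separation only the point $0$ contributes, so $d^n_0 = \int \phi \, d\mu_n \to \int \phi \, d\mu = c_0$, giving $\abs{c_0} \geq 1/2$.

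It remains to show that $h := \sum_{\gamma \in \Gamma} c_\gamma \pi(\gamma) g$ vanishes in $M^\infty(\rd)$. By the STFT covariance formula, $V_g h(z) = \int_\rdd k_z \, d\mu$ with $k_z(w) := e^{-2\pi i w_1(z_2-w_2)} V_g g(z-w)$; since $V_g g \in \wtest(\rdd)$ by Lemma~\ref{lemma_stft}(a) and $\abs{k_z(w)} = \abs{V_g g(z-w)}$, translation invariance of the amalgam norm shows $k_z \in \wtest(\rdd)$. Hence $V_g h(z) = \lim_n \int k_z \, d\mu_n = \lim_n V_g h_n(z)$, and the latter is zero because $h_n \to 0$ in $M^\infty$ means $V_g h_n \to 0$ uniformly. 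Thus $V_g h \equiv 0$, so $h = 0$, which together with $\abs{c_0} \geq 1/2$ completes the proof. The main obstacle I foresee is establishing $c \neq 0$: the uniform separation hypothesis is crucial here, since it prevents points of $\Lambda'_n$ from collapsing to a single point in $\Gamma$ and thereby allows the bump-function test to isolate $c_0$.
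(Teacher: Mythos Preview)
Your proof is correct and follows essentially the same approach as the paper's own argument: choose $\lambda_n$ with $\abs{c^n_{\lambda_n}}\geq 1/2$, recenter, pass to subsequences so that the shifted sets converge weakly and the associated coefficient measures converge in $\sigma(\wmes,\wtest)$, identify the limit measure as supported on $\Gamma$, test against a bump in $B_{\delta/2}(0)$ to get $c\neq 0$, and then verify $\sum_\gamma c_\gamma\pi(\gamma)g=0$ by pairing with $\pi(z)g$ and using $\overline{V_g\pi(z)g}\in\wtest$. The only cosmetic differences are that you invoke the covariance property~\eqref{eq_cov_c} and the twisted shift $\twistshift$ explicitly (the paper introduces the equivalent phases $\theta_{\lambda,n}$ by hand), and you write the test function as $k_z(w)=e^{-2\pi i w_1(z_2-w_2)}V_gg(z-w)$, which is exactly $\overline{V_g\pi(z)g}(w)$.
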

\begin{proof}
Combining the hypothesis~\eqref{unifsep} and
observation~\eqref{eq_sep_relsep},   we also  have the  uniform  relative separation
\begin{align}
\label{eq_unif_relsep}
\sup_n \rel(\Lambda_n) < \infty.
\end{align}
Since $\norm{c^n}_\infty=1$ for every  $n \geq 1$, we may  choose $\lambda_n \in \Lambda_n$ be such that
$\abs{c^n_{\lambda_n}} \geq 1/2$. Let $\theta_{\lambda,n}\in \bC $  such that
\begin{align*}
\theta_{\lambda,n} \pi(\lambda-\lambda_n)=\pi(-\lambda_n) \pi(\lambda),
\end{align*}
and consider the measures
$\mu_n := \sum_{\lambda \in \Lambda_n}
\theta_{\lambda,n} c^n_\lambda
\delta_{\lambda-\lambda_n}$. Then by Lemma \ref{lemma_norms_mes},
$\norm{\mu_n}_{\wmes} 
\lesssim \rel(\Lambda_n-\lambda_n) \norm{c^n}_\infty 
= \rel(\Lambda_n) \norm{c^n}_\infty \lesssim 1$. Using \eqref{eq_unif_relsep}
and Lemma \ref{lemma_compactness}, we may  pass  to a subsequence
such that (i) $\Lambda_n - \lambda_n \weakconv \Gamma$  for some relatively separated
set $\Gamma \subseteq \Rtdst$ and (ii)  $\mu_n \longrightarrow \mu$
in $\sigma(\wmes, \wtest)(\Rtdst)$  for some measure $\mu \in \wmes(\Rtdst)$. The uniform separation condition in
\eqref{unifsep} implies that $\Gamma$ is also separated.

Since $\supp(\mu_n) \subseteq \Lambda_n-\lambda_n$ it follows 
from Lemma \ref{lemma_mes_supp_a} that $\supp(\mu) \subseteq
\overline{\Gamma } = \Gamma $. Hence,
\begin{align*}
\mu = \sum_{\lambda \in \Gamma} c_\lambda \delta_\lambda,
\end{align*}
for some sequence of complex numbers $c$, and, 
by Lemma \ref{lemma_norms_mes}, $\norm{c}_\infty \leq \norm{\mu}_{\wmes} < \infty$.

From \eqref{unifsep} it follows that
for all $n \in \Nst$, $B_{\delta /2} (\lambda_n) \cap \Lambda_n = \sett{\lambda_n}$.
Let $\varphi \in C(\Rtdst)$ be real-valued, supported on
$B_{\delta / 2} (0)$ and such that $\varphi(0)=1$. Then
\begin{align*}
\abs{\int_{\Rtdst} \varphi \, d\mu}=\lim_n \abs{\int_{\Rtdst} \varphi \, d\mu_n}
=\lim_n \abs{c^n_{\lambda_n}} \geq 1/2.
\end{align*}
Hence $\mu \not = 0$ and therefore $c \not= 0$.

Finally, we  show that the short-time Fourier transform of
$\sum_\lambda c_\lambda \pi(\lambda) g $ is zero.
Let $z \in \Rtdst$ be arbitrary and  recall that by Lemma
\ref{lemma_stft} $V_{g} \pi(z)g \in \wtest(\Rtdst)$. Now we 
estimate
\begin{align*}
&\abs{\ip{\sum_{\lambda \in \Gamma} c_\lambda \pi(\lambda)g}{\pi(z)g}}
=
\abs{\sum_{\lambda \in \Gamma} c_\lambda \overline{V_{g} \pi(z)g}(\lambda)}
\\
&\quad=\abs{\int_{\Rtdst} \overline{V_{g}\pi(z)g} \,d\mu}
=\lim_n \abs{\int_{\Rtdst} \overline{V_{g}\pi(z)g} \,d{\mu_n}}
\\
&\quad=
\lim_n \abs{\ip{\sum_{\lambda \in \Lambda_n} \theta_{\lambda,n} c^n_\lambda \pi(\lambda-\lambda_n)g}{\pi(z)g}}
\\
&\quad\leq
\lim_n
\bignorm{\sum_{\lambda \in \Lambda_n} \theta_{\lambda,n} c^n_\lambda \pi(\lambda-\lambda_n)g}_{M^\infty}
\bignorm{g}_{M^1}
\\
&\quad=
\lim_n
\bignorm{\pi(-\lambda_n) \sum_{\lambda \in \Lambda_n} c^n_\lambda \pi(\lambda)g}_{M^\infty} \bignorm{g}_{M^1}
\\
&\quad=
\lim_n
\bignorm{\sum_{\lambda \in \Lambda_n} c^n_\lambda \pi(\lambda)g}_{M^\infty} \bignorm{g}_{M^1}
=0.
\end{align*}
We have shown that
$V_g (\sum_{\lambda \in \Gamma} c_\lambda \pi(\lambda)g) \equiv 0$ and
thus  $\sum_{\lambda \in \Gamma} c_\lambda \pi(\lambda)g
\equiv 0$,  as desired.
\end{proof}

\begin{proof}[Proof of Theorem \ref{th_char_riesz}]
The equivalence of  (i), (ii),  and (iii) follows from Corollary
\ref{coro_main_mol}(b), and  the equivalence
of (iii) and (iv) follows by duality.

{\bf (iv) $\Rightarrow$ (v)}.
Assume (iv) and consider a sequence $\Lambda - z_n \weakconv \Lambdastar$. Let
$\lambda \in \Lambdastar$ be
arbitrary and  let $\sett{\lambda_n: n \in \Nst} \subseteq \Lambda$ be a sequence such that
$\lambda_n -z_n \longrightarrow \lambda$. By the open map theorem, every sequence $c \in \ell^1(\Lambda)$
with $\norm{c}_1=1$ has a preimage $c=C_{g,\Lambda}(f)$ with $\norm{f}_{M^1} \lesssim 1$.
With the covariance property~\eqref{eq_cov_c}  we deduce that there exist $f_n \in M^1(\rd )$, such
that $c=C_{g,\Lambda -z_n}(f_n)$ and $\|f_n\|_{M^1} \lesssim 1$.

In particular, for each $n \in \Nst$ there exists an interpolating function $h_n \in M^1(\Rdst)$ such that $\norm{V_g h_n}_1
\lesssim 1$,
$V_g h_n(\lambda_n-z_n)=1$ and $V_g h_n \equiv 0$ on $\Lambda-z_n \setminus \sett{\lambda_n-z_n}$.
By passing to a subsequence we may assume that $h_n \longrightarrow h$ in $\sigma(M^1,\modzero)$. It follows
that $\norm{h}_{M^1} \lesssim 1$. Since  $V_g h_n \longrightarrow V_g h$ uniformly
on compact sets  by Lemma \ref{lemma_stft}, we obtain that
\begin{align*}
V_g h(\lambda) = \lim_n V_g h_n(\lambda_n-z_n)=1.
\end{align*}
Similarly, given $\gamma \in \Lambdastar \setminus \sett{\lambda}$, there exists a sequence
$\sett{\gamma_n: n \in \Nst} \subseteq \Lambda$ such that
$\gamma_n -z_n \longrightarrow \gamma$. Since $\lambda \not= \gamma$, for $n\gg 0$
we have that $\gamma_n \not= \lambda_n$ and consequently $V_g h_n(\gamma_n-z_n)=0$. It follows
that $V_g h(\gamma)=0$.

Hence, we have shown that for each $\lambda \in \Lambdastar$ there
exists an interpolating  function $h_\lambda \in M^1(\Rdst)$
such that $\norm{h_\lambda}_{M^1} \lesssim 1$, $V_g h_\lambda(\lambda)=1$ and
$V_g h_\lambda\equiv 0$ on $\Lambdastar \setminus \sett{\lambda}$.
Given an arbitrary sequence $c \in \ell^1(\Lambdastar)$ we consider
\begin{align*}
f := \sum_{\lambda \in \Lambdastar} c_\lambda h_\lambda.
\end{align*}
It follows that $f \in M^1(\Rdst)$ and that $C_{g,\Lambdastar} f = c$. Hence, $C_{g,\Lambdastar}$ 
is onto $\ell ^1(\Gamma )$,  and therefore $C^*_{g,\Lambdastar}$ is bounded below.

{\bf (v) $\Rightarrow$ (vi)} is clear.

{\bf (vi) $\Rightarrow$ (iii)}.
Suppose that (iii) does not hold. Then there exists a sequence $\sett{c^n: n \in \Nst}
\subseteq \ell^\infty(\Lambda)$ such that $\norm{c^n}_\infty = 1$ and
\begin{align*}
\bignorm{\sum_{\lambda \in \Lambda} c^n_\lambda \pi(\lambda)g}_{M^\infty} \longrightarrow 0,
\mbox{ as }n \longrightarrow \infty.
\end{align*}
We now apply Lemma \ref{lemma_zero_seq} with $\Lambda_n := \Lambda$ and obtain a set $\Gamma \in W(\Lambda)$
and a non-zero sequence $c \in \ell^\infty(\Gamma)$ such that $\sum
_{\lambda \in \Gamma } c_\lambda \pi (\lambda )g = C^* _{g,\Gamma}(c)=0$. This contradicts (vi).
\end{proof}

\section{Deformation of sets and Lipschitz convergence}
\label{sec_regconv}

The characterizations  of Theorem~\ref{th_char_frame} suggest that Gabor frames are
invariant under ``weak deformations'' of $\Lambda $. One might expect
that if $\cG (g,\Lambda )$ is a frame  and $\Lambda '$ is  close to
$\Lambda $ in the weak sense, then $\cG (g, \Lambda ')$ is also a
frame. This view is too simplistic. Just choose $\Lambda _n = \Lambda \cap
B_n(0)$, then $\Lambda _n \weakconv \Lambda $, but $\Lambda _n$ is a
finite set and thus   $\cG (g,\Lambda _n)$ is
never a frame. For a  deformation result we need to introduce  a
finer notion of convergence.

Let $\Lambda \subseteq \Rdst$ be a (countable)  set. We consider a sequence 
$\sett{\Lambda_n: n \geq 1}$ of subsets of $\Rdst$
produced in the following way. For each $n \geq 1$, let
$\map_n: \Lambda \to \Rdst$ be a map and let
$\Lambda_n := \map_n(\Lambda) = \sett{\mapn{\lambda}: \lambda \in \Lambda}$. 
We assume that $\mapn{\lambda} \longrightarrow \lambda$, as $n \longrightarrow \infty$,
for all $\lambda \in \Lambda$. The sequence of sets $\sett{\Lambda_n: n \geq 1}$ together with the maps
$\sett{\map_n: n \geq 1}$ is called a \emph{deformation} of $\Lambda$. We think of each sequence of points
$\sett{\mapn{\lambda}: n \geq 1}$ as a (discrete) path moving towards
the endpoint  $\lambda$.

 We will often  say that $\sett{\Lambda_n: n \geq 1}$ is a deformation
 of $\Lambda$, with the 
understanding that a sequence of underlying maps $\sett{\map_n: n \geq  1}$  is also given. 

\begin{definition}
A deformation $\sett{\Lambda_n: n \geq 1}$ of $\Lambda $ is called
\emph{Lipschitz}, denoted by  $\Lambda_n \lipconv \Lambda$, 
if  the following two  conditions hold:

{\bf \regone } Given $R>0$,
\begin{align*}
\sup_{
\stackrel{\lambda, \lambda' \in \Lambda}{\abs{\lambda-\lambda'} \leq R}}
\abs{(\mapn{\lambda} - \mapn{\lambda'}) - (\lambda - \lambda')} \rightarrow 0,
\quad \mbox {as } n \longrightarrow \infty.
\end{align*}

{\bf \regtwo }  Given $R>0$, there exist $R'>0$ and $n_0 \in \Nst$
such that if 
$\abs{\mapn{\lambda} - \mapn{\lambda'}} \leq R$ for \emph{some} $n
\geq n_0$ and some $\lambda, \lambda' \in \Lambda$,  then
$\abs{\lambda-\lambda'} \leq R'$.

\end{definition}
 Condition \regone\ means that $\mapn{\lambda} - \mapn{\lambda'}
 \longrightarrow \lambda - \lambda'$ uniformly in
 $\abs{\lambda-\lambda'}$. In particular, by fixing $\lambda '$, we
 see that Lipschitz convergence implies the weak convergence $\Lambda _n
\weakconv \Lambda $. Furthermore,  if $\{\Lambda_n: n \geq 1\}$ is
Lipschitz convergent to $\Lambda$, then 
so is every subsequence $\sett{\Lambda_{n_k}: k \geq 1}$.

\begin{example} Jitter error: {\rm 
Let $\Lambda \subseteq \Rdst$ be relatively separated and let $\sett{\Lambda_n: n \geq 1}$ be a deformation
of $\Lambda$. If $\sup_\lambda \abs{\mapn{\lambda} - \lambda} \longrightarrow 0$,
as $n \longrightarrow \infty$, then $\Lambda_n \lipconv
\Lambda$. }
\end{example}

\begin{example} Linear deformations: {\rm 
Let $\Lambda = A \zdd \subseteq \rdd$, with $A$ an invertible $2d\times 2d$ matrix, $\Lambda _n = A_n \zdd $
for a sequence of invertible $2d\times 2d$-matrices and assume that $\lim
A_n = A$. Then $\Lambda_n \lipconv \Lambda $. In
this case conditions \regone\  and \regtwo\ are easily checked by taking
$\tau_n = A_n A\inv$.  }
\end{example}

The third class of examples contains  differentiable, nonlinear deformations. 

\begin{lemma}
\label{lemma_reg_conv}
Let $p \in (d, \infty]$. For each $n \in \Nst $, let $T_n=(T_n^1, \ldots, T_n^d): \Rdst \to \Rdst$ be a 
map such that each coordinate  function $T^k_n:\Rdst \to \Rst$ is continuous,
locally integrable and has a weak derivative in $L^p_{loc}(\Rdst)$. 
Assume that
\begin{align*}
&T_n(0)= 0,
\\
&\abs{DT_n-I} \longrightarrow 0 \,\,\,   \mbox{ in } L^p(\Rdst).
\end{align*}
(Here, $DT_n$ is the Jacobian matrix consisting  of the  partial
derivatives of $T_n$ and the second condition means 
that each entry of the matrix $DT_n-I$ tends to $0$ in $L^p$.)

Let $\Lambda \subseteq \Rdst$ be a relatively separated set and consider the deformation
$\Lambda_n := T_n(\Lambda)$ (i.e $\map_n := {T_n}\big| _{\Lambda})$.
Then $\Lambda_n $ is Lipschitz convergent to $ \Lambda$.
\end{lemma}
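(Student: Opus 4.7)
The plan is to reduce everything to Morrey's inequality applied to the ``displacement'' maps $u_n := T_n - \mathrm{id}$. By hypothesis each coordinate of $u_n$ is continuous, locally integrable, and has a weak derivative in $L^p_{loc}(\Rdst)$, and $Du_n = DT_n - I$ has each entry in $L^p(\Rdst)$ with $p>d$. Applying Morrey's inequality in the homogeneous form (valid for gradients in $L^p(\Rdst)$ without an $L^p$ assumption on $u_n$ itself) coordinatewise yields a constant $C = C(d,p)$ and Hölder exponent $\alpha = 1 - d/p \in (0,1)$ such that
$$|u_n(x) - u_n(y)| \leq C|x-y|^{\alpha}\, \|DT_n - I\|_{L^p(\Rdst)}, \qquad x,y \in \Rdst.$$
Set $\varepsilon_n := \|DT_n - I\|_{L^p(\Rdst)}$, which tends to $0$ by hypothesis.

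First I would check the standing requirement $\mapn{\lambda} \to \lambda$ for every $\lambda \in \Lambda$ (needed for the setup of a deformation). Taking $y=0$ and using $T_n(0) = 0$ gives $|T_n(\lambda) - \lambda| = |u_n(\lambda) - u_n(0)| \leq C|\lambda|^{\alpha}\varepsilon_n \to 0$. Property \regone{} is then essentially read off from the Morrey estimate: for $\lambda, \lambda' \in \Lambda$ with $|\lambda - \lambda'| \leq R$,
$$|(T_n(\lambda) - T_n(\lambda')) - (\lambda - \lambda')| = |u_n(\lambda) - u_n(\lambda')| \leq C R^{\alpha} \varepsilon_n,$$
and the right-hand side tends to $0$ uniformly in the pair $(\lambda,\lambda')$.

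For \regtwo{}, I use the reverse triangle inequality in conjunction with Morrey once more. If $|T_n(\lambda) - T_n(\lambda')| \leq R$, then
$$|\lambda - \lambda'| \leq |T_n(\lambda) - T_n(\lambda')| + |u_n(\lambda) - u_n(\lambda')| \leq R + C\varepsilon_n |\lambda - \lambda'|^{\alpha}.$$
Pick $n_0$ so that $C\varepsilon_n \leq 1/2$ for $n \geq n_0$. Since $\alpha < 1$, if $|\lambda - \lambda'| \geq 1$ one has $|\lambda-\lambda'|^{\alpha} \leq |\lambda-\lambda'|$, and absorbing the right-hand term gives $|\lambda - \lambda'| \leq 2R$; otherwise $|\lambda - \lambda'| < 1$. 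In either case $R' := \max(2R, 1)$ does the job.

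The one point requiring care is the invocation of Morrey's inequality in the global form above without an $L^p$ assumption on $u_n$ itself; this is the standard estimate for the homogeneous Sobolev space $\dot W^{1,p}(\Rdst)$ with $p > d$, and the continuity hypothesis on the coordinates of $T_n$ picks out the correct pointwise representative so that the inequality can be evaluated at the points of $\Lambda$. Everything else is a brief triangle-inequality manipulation.
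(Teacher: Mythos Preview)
Your proof is correct and follows essentially the same route as the paper's: apply Morrey's inequality to the displacement $u_n=T_n-\mathrm{id}$ to get $|(T_n x - T_n y)-(x-y)|\leq C\varepsilon_n|x-y|^\alpha$, read off \regone{} directly, and for \regtwo{} split on $|\lambda-\lambda'|\gtrless 1$ and absorb the H\"older term. One tiny slip: $\alpha=1-d/p$ lies in $(0,1]$ (it equals $1$ when $p=\infty$), but the inequality $|\lambda-\lambda'|^\alpha\leq|\lambda-\lambda'|$ for $|\lambda-\lambda'|\geq 1$ only needs $\alpha\leq 1$, so the argument is unaffected.
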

\begin{rem}{\rm 
In particular,  the hypothesis of Lemma \ref{lemma_reg_conv} is satisfied by every sequence of differentiable
maps $T_n: \Rdst \to \Rdst$ such that}
\begin{align*}
&T_n(0)= 0,
\\
&\sup_{z\in\Rtdst} \abs{DT_n(z)-I} \longrightarrow 0.
\end{align*}
\end{rem}
\begin{proof}[Proof of Lemma \ref{lemma_reg_conv}]
Let $\alpha := 1 - \tfrac{d}{p} \in (0,1]$. We use the following Sobolev embedding known as
Morrey's inequality (see for example \cite[Chapter 4, Theorem
3]{evga92}). If $f:
\Rdst \to \Rst$  is locally integrable and possesses  a weak
derivative in $L^p(\Rdst)$, then $f$ is $\alpha $-H\"older continuous
(after being redefined on a  set of  measure zero). If $x,y \in \Rdst$, then 
\begin{align*}
\abs{f(x)-f(y)} \lesssim \norm{\nabla f}_{L^p(\Rdst)}  \abs{x-y}^\alpha,
\qquad x,y \in \Rdst.
\end{align*}
Applying Morrey's inequality to each coordinate function of  $T_n-I$
we obtain that there is a constant $C>0$ such that,
for $x,y \in \Rdst$,
\begin{align*}
\abs{(T_n x - T_n y) - (x - y)} = 
\abs{(T_n - I)x - (T_n - I)y} \leq  
C \norm{DT_n - I}_{L^p(\Rdst)}\,  \abs{x-y}^\alpha.
\end{align*}
Let  $\epsilon _n = C\norm{DT_n - I}_{L^p(\Rdst)}$,
where $\norm{DT_n - I}_{L^p(\Rdst)}$ is the $L^p$-norm of
$\abs{DT_n(\cdot) - I}$. Then $\epsilon _n \to 0$ by assumption and 
\begin{align}
\label{eq_lemma_p}
\abs{(T_n x - T_n y) - (x - y)} \leq \varepsilon_n \abs{x-y}^\alpha,
\qquad x,y \in \Rdst.
\end{align}
Choose  $x=\lambda$ and $y=0$, then 
$T_n(\lambda) \longrightarrow \lambda$ for all $\lambda \in
\Lambda$ (since $T_n(0)=0$). Hence
$\Lambda_n$ is a deformation of $\Lambda$. 

If $\lambda, \lambda' \in \Lambda$  and 
$\abs{\lambda-\lambda'} \leq R$, then  \eqref{eq_lemma_p} implies  that
\begin{align*}
\abs{(T_n \lambda - T_n \lambda') - (\lambda - \lambda')}
\leq \varepsilon_n R^\alpha.
\end{align*}
Thus condition \regone\  is satisfied. 

For condition \regtwo , choose  $n_0$  such that 
$\varepsilon_n \leq 1/2$ for $n \geq n_0$. If $|\lambda - \lambda '|
\leq 1$, there is nothing to show (choose $R' \geq 1$).
If $|\lambda -\lambda '| \geq 1$ and  $\abs{T_n \lambda -T_n\lambda'} \leq R$ for some $n \geq n_0$,
$\lambda, \lambda' \in \Lambda$, then by \eqref{eq_lemma_p}  we obtain
\begin{align*}
\abs{(T_n \lambda - T_n \lambda') - (\lambda - \lambda')} \leq 1/2 \abs{\lambda-\lambda'}^\alpha
\leq 1/2 \abs{\lambda-\lambda'}.
\end{align*}
This implies that 
\begin{align}
\label{eq_super_bound}
\abs{\lambda-\lambda'} \leq 2 \abs{(T_n \lambda - T_n \lambda')},
\mbox{ for all }n\geq n_0.
\end{align}
Since $\abs{T_n \lambda -T_n\lambda'} \leq R$, we conclude that
$\abs{\lambda -\lambda'} \leq 2R $, and we may actually choose $R'=
\max (1,2R) $ in condition \regtwo. 
\end{proof}

\begin{rem}
{\rm Property \regone\ can be proved under slightly weaker conditions on
  the convergence of $DT_n - I$. In fact,  we
  need~\eqref{eq_lemma_p} to hold only for $|x-y|\leq R$. Thus it
  suffices to assume  locally uniform convergence in $L^p$, i.e.,
$$
\sup _{y\in \rdd } \int _{B_R(y)} |DT_n (x) - I|^p \, dx  \to 0
$$
 for   all $R>0$. }  
\end{rem}
We prove some technical lemmas about Lipschitz convergence.
\begin{lemma}
\label{lemma_dens}
Let $\sett{\Lambda_n: n \geq 1}$ be a deformation of a  relatively
separated set $\Lambda \subseteq \Rdst$.  Then the following hold.
\begin{itemize}
\item[(a)] If $\Lambda_n$ is Lipschitz convergent  to $\Lambda$ and
$\sep(\Lambda)>0$, then $\liminf_n \sep(\Lambda _n) >0$.
\item[(b)] If $\Lambda_n$ is  Lipschitz convergent to $\Lambda$, then $\limsup_n \rel(\Lambda_n) < \infty$.
\item[(c)] If $\Lambda_n$ is Lipschitz convergent to $\Lambda$ and
$\gap(\Lambda) <\infty$, then $\limsup_n \gap(\Lambda_n)<\infty$.
\end{itemize}
\end{lemma}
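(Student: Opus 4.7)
The three statements will be attacked using the complementary roles of (L1) and (L2): (L2) confines points that are close after deformation to a bounded region of the original set $\Lambda$, while (L1) then transfers a small uniform error back to $\Lambda_n$. In each case I would argue by contradiction.

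For (a), suppose $\liminf_n \sep(\Lambda_n)=0$. Passing to a subsequence, pick $\lambda_n\neq \lambda'_n\in\Lambda$ with $|\tau_n(\lambda_n)-\tau_n(\lambda'_n)|\to 0$. Condition (L2) with $R=1$ provides $R'$ and $n_0$ such that $|\lambda_n-\lambda'_n|\leq R'$ for $n\geq n_0$. Applying (L1) at the fixed scale $R'$, $(\tau_n(\lambda_n)-\tau_n(\lambda'_n))-(\lambda_n-\lambda'_n)\to 0$, so $|\lambda_n-\lambda'_n|\to 0$, contradicting $\sep(\Lambda)>0$.

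For (b), I would bound $\#(\Lambda_n\cap B_1(x))$ uniformly in $x$ and $n$. If $\tau_n(\lambda_1),\ldots,\tau_n(\lambda_k)$ are distinct points of $\Lambda_n$ lying in $B_1(x)$, then all pairwise distances $|\tau_n(\lambda_i)-\tau_n(\lambda_j)|$ are at most $2$. By (L2) with $R=2$, for $n\geq n_0$ the preimages $\lambda_1,\ldots,\lambda_k$ all lie in a single ball $B_{R'}(\lambda_1)$, which contains at most $C(d,R')\,\rel(\Lambda)$ points of $\Lambda$ by a standard covering argument. Taking the supremum over $x$ yields $\limsup_n \rel(\Lambda_n)<\infty$.

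The heart of the lemma is (c), and I expect it to be the main technical obstacle. Assume for contradiction that $\gap(\Lambda_n)\to\infty$ along some subsequence, and pick $y_n\in\Rdst$ with $d(y_n,\Lambda_n)\to\infty$. Set $\eta:=\gap(\Lambda)$ and fix any $\lambda_0\in\Lambda$. My plan is to run a greedy walk in $\Lambda$ toward $y_n$: given $\lambda_k$ with $D_k:=|\tau_n(\lambda_k)-y_n|\geq 2\eta$, let $v_k:=(y_n-\tau_n(\lambda_k))/D_k$ and use $\gap(\Lambda)=\eta$ to select $\lambda_{k+1}\in\Lambda$ with $|\lambda_{k+1}-(\lambda_k+2\eta v_k)|\leq \eta$. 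Then $|\lambda_{k+1}-\lambda_k|\leq 3\eta$, and (L1) applied at the fixed scale $R=3\eta$ yields
\begin{align*}
\tau_n(\lambda_{k+1})-y_n=(2\eta-D_k)v_k+\delta_k+e_{k,n},
\end{align*}
with $|\delta_k|\leq\eta$ and $|e_{k,n}|\leq \epsilon_n$, where $\epsilon_n\to 0$ by (L1). Hence $D_{k+1}\leq D_k-\eta+\epsilon_n$. The crucial point is that $\epsilon_n$ is determined by the fixed scale $3\eta$ and is independent of the iteration count; once $n$ is large enough that $\epsilon_n<\eta/2$, each step decreases $D_k$ by at least $\eta/2$, so after finitely many steps one reaches some $D_K<2\eta$. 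This gives $d(y_n,\Lambda_n)<2\eta$, contradicting $d(y_n,\Lambda_n)\to\infty$.
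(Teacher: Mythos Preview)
Your proof is correct in all three parts. Part (a) is essentially the paper's argument. Part (b) differs: the paper reduces to (a) by splitting $\Lambda$ into finitely many separated sets $\Lambda^1,\dots,\Lambda^L$ and invoking the bound $\rel(\Lambda_n)\leq \sum_k \rel(\Lambda_n^k)\lesssim L\delta^{-d}$; your direct use of \regtwo\ to pull $\Lambda_n\cap B_1(x)$ back into a single ball $B_{R'}(\lambda_1)\subseteq\Lambda$ is shorter and avoids the decomposition entirely. For part (c) the arguments are genuinely different. The paper gives a one-shot geometric construction: assuming some cube $Q_{R'}(z)$ misses $\Lambda_n$ except at a boundary point $\mapn{\lambda}$, it selects a single auxiliary point $\gamma\in\Lambda$ in a prescribed coordinate direction behind $\lambda$ (using $\gap(\Lambda)\leq r$ on the cube grid), and \regone\ at scale $6r$ forces $\mapn{\gamma}$ strictly inside $Q_{R'}(z)$, a contradiction. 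Your greedy walk is an iterative version of the same idea: each step moves a bounded distance in $\Lambda$ toward $y_n$ and \regone\ at the fixed scale $3\eta$ guarantees the image moves at least $\eta/2$ closer, with the crucial observation that the error $\epsilon_n$ is determined by that fixed scale and not by the step count. The paper's construction is slicker (no iteration, explicit radius $R=8r$), but the coordinate bookkeeping is somewhat intricate; your walk is longer to execute but conceptually more transparent and yields the same explicit bound $\limsup_n\gap(\Lambda_n)\leq 2\gap(\Lambda)$.
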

\begin{proof}
(a) By assumption  $\delta := \sep(\Lambda)>0$. Using \regtwo, let $n_0 \in \Nst$ and
$R'>0$ be such that if $\abs{\mapn{\lambda} - \mapn{\lambda'}} \leq
\delta/2$ for some $\lambda,\lambda' \in \Lambda$ and $n \geq n_0$, 
then $\abs{\lambda-\lambda'} \leq R'$. By \regone ,  choose  $n_1 \geq
n_0$  such that for  $n \geq n_1$ 
\begin{align*}
\sup_{\abs{\lambda-\lambda'} \leq R'} \abs{(\mapn{\lambda} - \mapn{\lambda'}) - (\lambda - \lambda')} < \delta/2.
\end{align*}
\textbf{Claim.}  $\sep(\Lambda_n) \geq \delta/2$ for $n\geq n_1$. 

If the claim is not true, then for some $n \geq n_0$ there exist two distinct 
points $\lambda, \lambda' \in \Lambda$ such that 
$\abs{\mapn{\lambda}-\mapn{\lambda'}} \leq \delta/2$.  Then $\abs{\lambda-\lambda'} \leq R'$ and consequently
\begin{align*}
\abs{\lambda-\lambda'} \leq \abs{(\mapn{\lambda} - \mapn{\lambda'}) - (\lambda - \lambda')} +
\abs{\mapn{\lambda}-\mapn{\lambda'}}
< \delta,
\end{align*}
contradicting the fact that $\Lambda$ is $\delta$-separated.

(b) Since $\Lambda$ is relatively separated we can split it into
finitely many separated sets $\Lambda=\Lambda^1 \cup
\ldots \cup \Lambda^L$  with $\sep (\Lambda ^k) >0$. 

Consider the sets defined by restricting the deformation $\map_n$ to each $\Lambda^k$
\begin{align*}
\Lambda^k_n := \sett{\mapn{\lambda}: \lambda \in \Lambda^k}. 
\end{align*}

As proved above in (a), there exists $n_0 \in \Nst$ and $\delta>0$
such that $\sep(\Lambda^k_n) \geq \delta$ for all $n \geq n_0$
and $1 \leq k \leq L$.  Therefore, using \eqref{eq_sep_relsep},
\begin{align*}
\rel(\Lambda_n) \leq \sum_{k=1}^L \rel(\Lambda^k_n) \lesssim L \delta^{-d},
\qquad n \geq n_0,
\end{align*}
and the conclusion follows.

(c) By (b) we may assume that each $\Lambda_n$ is relatively separated.
Assume that $\gap(\Lambda) <\infty$. Then there exists $r>0$ such that every cube
$Q_r(z) := z + [-r,r]^{d}$ intersects $\Lambda$. By  \regone ,  there
is  $n_0 \in \Nst$ such that for $n \geq n_0$, 
\begin{align}
\label{eq_lemma_reg}
\sup_{\stackrel{\lambda, \lambda' \in \Lambda}{\abs{\lambda-\lambda'}_\infty \leq 6r}}
\abs{(\mapn{\lambda} - \mapn{\lambda'}) - (\lambda - \lambda')}_\infty \leq r.
\end{align}
Let $R:=8r$ and $n \geq n_0$. We will  show that every cube $Q_R(z)$
intersects $\Lambda_n$. 
This will give a uniform upper bound for $\gap(\Lambda_n)$.
Suppose on the contrary that some cube $Q_R(z)$ does not meet $\Lambda_n$ and consider a larger radius
$R' \geq R$ such that $\Lambda_n$ intersects the boundary but not the interior of $Q_{R'}(z)$.
(This is possible because $\Lambda_n$ is relatively separated and therefore closed.) Hence, there exists
$\lambda \in \Lambda$ such that
$\abs{\mapn{\lambda}-z}_\infty=R'$. Let us write 
\begin{align*}
&(z-\mapn{\lambda})_k = \delta_k c_k, \qquad k=1,\ldots,d,
\\
&\delta_k \in \{-1,1\}, \qquad k=1,\ldots,d,
\\
&0 \leq c_k \leq R', \qquad k=1,\ldots,d \, ,
\end{align*}
and $c_k = R' $ for some $k$.  We now argue that we can select a point $\gamma \in \Lambda$ such that
\begin{align}
\label{eq_lga}
(\lambda-\gamma)_k = -\delta_k c'_k, \qquad k=1,\ldots,d,
\end{align}
with coordinates 
\begin{align}
\label{eq_lgb}
2r \leq c'_k \leq 6r, \qquad k=1,\ldots,d.
\end{align}
Using the fact that $\Lambda$ intersects each of the cubes
$\sett{Q_r(2rj):j\in \Zdst}$, 
we first select an index $j \in \Zdst$ such that $\lambda \in
Q_r(2rj)$. Second, we define a new index $j' \in \Zdst$ by $j_k' = j_k
+2\delta _k$ for $k=1, \dots , d$. 
We finally select a point $\gamma \in \Lambda \cap Q_r(2rj')$.
This procedure guarantees that \eqref{eq_lga} and \eqref{eq_lgb} hold
true. See Figure~\ref{fig:1}. 
\begin{figure}
\centering
\includegraphics[width=0.45\textwidth,natwidth=356,natheight=270]{./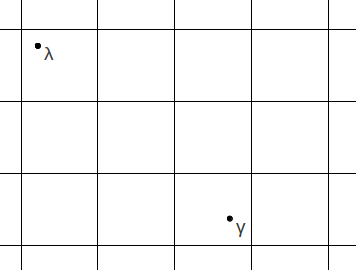}
\caption{The selection of the point $\gamma$ satisfying \eqref{eq_lga} and \eqref{eq_lgb}.}
\label{fig:1}
\end{figure}

Since by \eqref{eq_lga} and \eqref{eq_lgb} $\abs{\lambda-\gamma}_\infty \leq 6r$, we can use \eqref{eq_lemma_reg} to
obtain
\begin{align*}
(\mapn{\lambda}-\mapn{\gamma})_k = -\delta_k c''_k, \qquad k=1,\ldots,d,
\end{align*}
with coordinates
\begin{align*}
r \leq c''_k \leq 7r , \qquad k=1,\ldots,d.
\end{align*}
We write $(z-\mapn{\gamma})_k=(z-\mapn{\lambda})_k+(\mapn{\lambda}-\mapn{\gamma})_k=\delta_k(c_k-c''_k)$ and note
that $-7r \leq c_k-c''_k \leq R' - r$. Hence,
\begin{align*}
\abs{z-\mapn{\gamma}}_\infty \leq \max\{R'-r,7r\}=R'-r,
\end{align*}
since $7r = R-r \leq R'-r$. This shows that $Q_{R'-r}(z)$ intersects $\Lambda_n$, contradicting the choice of $R'$.
\end{proof}
The following lemma relates Lipschitz convergence to 
the weak-limit techniques.
\begin{lemma}
\label{lemma_reg_key}
Let $\Lambda \subseteq \Rdst$ be relatively separated  and let
$\sett{\Lambda_n: n \geq 1}$ be a  Lipschitz 
deformation of $\Lambda$. Then the following holds.

\begin{itemize}
\item[(a)] Let $\Gamma \subseteq \Rdst$ and $\sett{\lambda_n: n \geq
    1} \subseteq \Lambda$ be some sequence in $\Lambda $. If 
$\Lambda_n - \mapn{\lambda_n} \weakconv \Gamma$,  then $\Gamma \in
W(\Lambda)$. 

\item[(b)] Suppose that $\Lambda$ is relatively dense and
$\sett{z_n: n \geq 1} \subseteq \Rdst$ is an arbitrary sequence.
If $\Lambda_n - z_n \weakconv \Gamma$,  then $\Gamma \in W(\Lambda)$. 
\end{itemize}

\end{lemma}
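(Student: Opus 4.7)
My plan is to prove part (a) by a direct weak-convergence argument and then reduce part (b) to part (a) by a translation. For (a), the natural candidate for a weak limit of translates is $\Lambda - \lambda_n$: I aim to show $\Lambda - \lambda_n \weakconv \Gamma$, which by definition places $\Gamma\in W(\Lambda)$. By the alternative characterization in Lemma \ref{lemma_alt_weak}, this reduces to verifying, for arbitrary $R,\varepsilon>0$ and all sufficiently large $n$, the two inclusions $(\Lambda-\lambda_n)\cap B_R(0) \subseteq \Gamma + B_\varepsilon(0)$ and $\Gamma\cap B_R(0)\subseteq (\Lambda-\lambda_n)+B_\varepsilon(0)$.

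For the first inclusion I would take $\lambda - \lambda_n$ with $|\lambda - \lambda_n|\le R$; condition \regone{} gives $|(\mapn{\lambda} - \mapn{\lambda_n}) - (\lambda - \lambda_n)|\le\varepsilon/2$ for $n$ large, so $\mapn{\lambda}-\mapn{\lambda_n}$ lies in $B_{R+\varepsilon/2}(0)$; the hypothesis $\Lambda_n - \mapn{\lambda_n} \weakconv \Gamma$ then produces $\gamma\in\Gamma$ within $\varepsilon/2$ of this point, and the triangle inequality finishes. For the reverse inclusion, given $\gamma\in\Gamma\cap B_R(0)$ I use the same hypothesis to obtain $\lambda\in\Lambda$ with $|\mapn{\lambda}-\mapn{\lambda_n}-\gamma|\le\varepsilon/2$, whence $|\mapn{\lambda}-\mapn{\lambda_n}|\le R+\varepsilon/2$. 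Here \regtwo{} kicks in and bounds $|\lambda-\lambda_n|\le R'$ for some constant $R'$, and then \regone{} applied on this bounded range yields $|(\mapn{\lambda}-\mapn{\lambda_n})-(\lambda-\lambda_n)|\le\varepsilon/2$ for $n$ large. Another triangle inequality closes the argument.

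For part (b), the new difficulty is that $z_n$ is arbitrary rather than coming from a point of $\Lambda$, so the Lipschitz conditions cannot be invoked directly. My strategy is to approximate $z_n$ by $\mapn{\tilde\lambda_n}$ for a well-chosen $\tilde\lambda_n\in\Lambda$ and then appeal to part (a). Since $\Lambda$ is relatively dense, Lemma \ref{lemma_dens}(c) provides $\limsup_n\rho(\Lambda_n)<\infty$, so for $n$ large I can pick $\tilde\lambda_n\in\Lambda$ with $|\mapn{\tilde\lambda_n}-z_n|$ uniformly bounded. Setting $y_n:=z_n-\mapn{\tilde\lambda_n}$ and passing to a subsequence, $y_n\to y$ for some $y\in\Rdst$.

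Finally, I write $\Lambda_n - \mapn{\tilde\lambda_n} = (\Lambda_n - z_n) + y_n$. A routine triangle-inequality argument based on Lemma \ref{lemma_alt_weak} shows that shifting a weakly convergent sequence of sets by a convergent sequence of vectors yields the expected weak limit, so $\Lambda_n - \mapn{\tilde\lambda_n} \weakconv \Gamma+y$. Part (a), applied with the sequence $\tilde\lambda_n\in\Lambda$, then gives $\Gamma+y\in W(\Lambda)$. Since $W(\Lambda)$ is evidently translation invariant --- if $\Lambda+w_n\weakconv\Gamma_0$ then $\Lambda+(w_n-y)\weakconv\Gamma_0-y$ by the same reformulation --- we conclude $\Gamma = (\Gamma+y)-y \in W(\Lambda)$. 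I do not anticipate any substantial obstacle; the main technicality will be careful bookkeeping of $\varepsilon$'s when transferring weak convergence through the translations $y_n\to y$.
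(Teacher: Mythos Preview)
Your proposal is correct and follows essentially the same route as the paper: in part~(a) both arguments use \regone{} to prove one inclusion and \regtwo{} followed by \regone{} for the other, and part~(b) is identical (relative density via Lemma~\ref{lemma_dens}(c), pick $\tilde\lambda_n$ with $\mapn{\tilde\lambda_n}$ near $z_n$, pass to a convergent subsequence of the differences, and reduce to~(a)). The only cosmetic difference is that the paper first extracts a subsequence with $\Lambda-\lambda_n\weakconv\Gamma'$ and then identifies $\Gamma'=\Gamma$ via Lemma~\ref{lemma_weak_inc}, whereas you verify $\Lambda-\lambda_n\weakconv\Gamma$ directly through Lemma~\ref{lemma_alt_weak} by composing the same two inclusions with the hypothesis $\Lambda_n-\mapn{\lambda_n}\weakconv\Gamma$.
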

\begin{proof}
 (a) We first note that $\Gamma$ is relatively separated. Indeed,
 Lemma \ref{lemma_dens} says that
\begin{align*}
\limsup_{n \rightarrow \infty} \rel(\Lambda_n - \mapn{\lambda_n})
= \limsup_{n \rightarrow \infty} \rel(\Lambda_n) < \infty,
\end{align*}
and Lemma~\ref{lemma_compactness}(c)  implies that $\Gamma$ is relatively
 separated (and in particular closed). 

By extracting  a  subsequence,  we may assume that $\Lambda - \lambda_n \weakconv \Gamma'$ for
some relatively separated set $\Gamma' \in W(\Lambda )$. We will show
that $\Gamma'=\Gamma$ and consequently $\Gamma \in W(\Lambda)$.

Let $R>0$ and $0<\varepsilon \leq 1$ be given. By \regone, there exists $n_0 \in \Nst$ such that
\begin{align}
\label{eq_lam_lamp}
\lambda,\lambda' \in \Lambda , \abs{\lambda-\lambda'}\leq R, n \geq
n_0  \Longrightarrow  \abs{(\mapn{\lambda} - \mapn{\lambda'}) -
  (\lambda - \lambda')} \leq \varepsilon \, .
\end{align}

If   $n \geq n_0$  and $z \in (\Lambda - \lambda_n) \cap
B_R(0)$,  then there exists $\lambda \in \Lambda$ such that $z=\lambda-\lambda_n$ and $\abs{\lambda-\lambda_n} \leq R$.
Consequently~\eqref{eq_lam_lamp} implies that
\begin{align*}
\abs{(\mapn{\lambda} - \mapn{\lambda_n}) - z} =
\abs{(\mapn{\lambda} - \mapn{\lambda_n}) - (\lambda-\lambda_n)} \leq \varepsilon.
\end{align*}
This shows that
\begin{align}
\label{eq_subsets}
(\Lambda - \lambda_n) \cap B_R(0) \subseteq (\Lambda_n -
\mapn{\lambda_n})+B_\varepsilon(0) \,\, 
\mbox{ for  } n \geq n_0\,  .
\end{align}
Since $\Lambda - \lambda_n \weakconv \Gamma'$ and $\Lambda_n -
\mapn{\lambda_n} \weakconv \Gamma$, it follows from 
\eqref{eq_subsets} and Lemma \ref{lemma_weak_inc} that
$\Gamma' \subseteq \overline{\Gamma}=\Gamma$.

For the reverse  inclusion, let again $R>0$ and $0< \varepsilon \leq 1$. Let $R'>0$ and $n_0 \in \Nst$ be the
numbers associated with $R$ in \regtwo. Using \regone, choose  $n_1
\geq n_0$  such that
\begin{align}
\label{eq_lam_lamp_2} 
\lambda,\lambda' \in \Lambda, \abs{\lambda-\lambda'}\leq R', n \geq
n_1 \quad \Longrightarrow \,\, 
\abs{(\mapn{\lambda} - \mapn{\lambda'}) - (\lambda - \lambda')} \leq
\varepsilon \, .
\end{align}

If  $n \geq n_1$ and  $z \in (\Lambda_n - \mapn{\lambda_n}) \cap
B_R(0)$,  then 
$z=\mapn{\lambda}-\mapn{\lambda_n}$  for some $\lambda \in \Lambda$ and $\abs{\mapn{\lambda}-\mapn{\lambda_n}} \leq R$.
Condition \regtwo\, now  implies that
$\abs{\lambda-\lambda_n} \leq R'$ and therefore, using
\eqref{eq_lam_lamp_2} with $\lambda'=\lambda_n$, we get
\begin{align*}
\abs{z- (\lambda - \lambda_n)} =
\abs{(\mapn{\lambda} - \mapn{\lambda_n}) - (\lambda-\lambda_n)} \leq \varepsilon.
\end{align*}
Hence  we have proved that
\begin{align*}
(\Lambda_n - \mapn{\lambda_n}) \cap B_R(0) \subseteq (\Lambda - \lambda_n)+B_\varepsilon(0),
\mbox{ for   } n\geq n_1\,  .
\end{align*}
Since $\Lambda_n - \mapn{\lambda_n} \weakconv \Gamma$ and $\Lambda -
\lambda_n \weakconv \Gamma'$,  Lemma \ref{lemma_weak_inc} implies  that $\Gamma \subseteq
\overline{\Gamma'}=\Gamma'$. In conclusion $\Gamma ' = \Gamma \in
W(\Lambda )$,  as desired.

(b)  Since $\gap(\Lambda) < \infty$, Lemma \ref{lemma_dens}(c)  implies that
$\limsup_n \gap(\Lambda_n) < \infty$. By omitting finitely many $n$,
there exists  $L>0$ such that $\Lambda_n + B_L(0) = \Rdst$  for all $n \in
\Nst$. This implies the existence of a sequence $\sett{\lambda_n: n \geq 1} \subseteq \Lambda$ such that
$\abs{z_n-\mapn{\lambda_n}} \leq L$. By passing to a subsequence we
may assume  that $z_n-\mapn{\lambda_n} 
\longrightarrow z_0$  for some  $z_0 \in \Rdst$. 

Since $\Lambda_n - z_n \weakconv \Gamma$ and $z_n-\mapn{\lambda_n} \longrightarrow z_0$,
it follows that $\Lambda_n -\mapn{\lambda_n} \weakconv \Gamma + z_0$. By (a), we deduce that
$\Gamma + z_0 \in W(\Lambda)$ and thus  $\Gamma \in W(\Lambda)$, as desired.
\end{proof}

\section{Deformation of Gabor systems}
We now  prove the main results  on the  deformation of Gabor
systems. The proofs combine the characterization of non-uniform Gabor
frames and Riesz sequences without inequalities and the fine details
of Lipschitz convergence.

First we formulate the stability of Gabor frames under a
class of nonlinear deformations. 

\begin{theo}
\label{th_per_frame}
Let $g \in M^1(\Rdst)$,  $\Lambda \subseteq \Rtdst$  and assume that
$\cG (g,\Lambda )$ is a frame for   $\lrd $. 

If $\Lambda _n$ is  Lipschitz convergent to $\Lambda $, then $\cG (g,
\Lambda _n)$ is a frame for all sufficiently large $n$. 
\end{theo}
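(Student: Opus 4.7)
The plan is to argue by contradiction, invoking the characterization of Gabor frames ``without inequalities'' proved in Theorem~\ref{th_char_frame}. Suppose $\cG(g,\Lambda_n)$ fails to be a frame for infinitely many $n$. By Corollary~\ref{coro_main_mol}, after passing to a subsequence each $\cG(g,\Lambda_n)$ fails to be an $\infty$-frame for $M^\infty(\rd)$, so I can choose $f_n \in M^\infty(\rd)$ with $\|V_g f_n\|_\infty = 1$ and $\sup_{\lambda \in \Lambda_n} |V_g f_n(\lambda)| < 1/n$. Mimicking the (vi)$\Rightarrow$(iii) step of Theorem~\ref{th_char_frame}, I pick $z_n \in \rdd$ with $|V_g f_n(z_n)| \geq 1/2$ and set $h_n := \pi(-z_n) f_n$, so that by covariance \eqref{eq_tf_stft} one has $\|h_n\|_{M^\infty} = 1$ and $|V_g h_n(0)| \geq 1/2$.

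Two compactness arguments then produce the limiting objects. Banach--Alaoglu combined with Lemma~\ref{lemma_stft}(b) yields (along a subsequence) $h_n \to h$ in $\sigma(M^\infty, M^1)$ with $V_g h_n \to V_g h$ uniformly on compact sets, whence $|V_g h(0)| \geq 1/2$ and $h \neq 0$. On the set side, Lemma~\ref{lemma_dens}(b) gives $\sup_n \rel(\Lambda_n - z_n) < \infty$, so Lemma~\ref{lemma_compactness}(b) furnishes a further subsequence with $\Lambda_n - z_n \weakconv \Gamma$ for some relatively separated $\Gamma$. To transfer the vanishing of $V_g f_n$ on $\Lambda_n$ to a vanishing of $V_g h$ on $\Gamma$, fix $\gamma \in \Gamma$: Lemma~\ref{lemma_alt_weak} produces $\lambda_n \in \Lambda_n$ with $\lambda_n - z_n \to \gamma$, and uniform convergence on compact sets together with \eqref{eq_tf_stft} gives
\[
|V_g h(\gamma)| = \lim_n |V_g h_n(\lambda_n - z_n)| = \lim_n |V_g f_n(\lambda_n)| = 0,
\]
so $V_g h \equiv 0$ on $\Gamma$ while $h \neq 0$.

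To close the contradiction I need $\Gamma \in W(\Lambda)$, i.e.\ a weak limit of translates of the \emph{original} set $\Lambda$, so that condition (vi) of Theorem~\ref{th_char_frame} for $\cG(g,\Lambda)$ is violated. This is precisely where Lipschitz convergence (as opposed to mere weak convergence) is essential: Lemma~\ref{lemma_reg_key}(b) is tailored exactly for this purpose, provided $\Lambda$ is relatively dense, and this density is automatic from the frame assumption by Lemma~\ref{lemma_set_must_be}(a). Identifying $\Gamma \in W(\Lambda)$ via Lemma~\ref{lemma_reg_key}(b) is the main obstacle: without the \regone/\regtwo\ control on the distortion of mutual distances, the weak limit $\Gamma$ of the highly $n$-dependent sets $\Lambda_n - z_n$ need not be recoverable as a weak limit of translates of $\Lambda$ itself, and the qualitative characterization of Theorem~\ref{th_char_frame} could not be leveraged.
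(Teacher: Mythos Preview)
Your proposal is correct and follows essentially the same route as the paper's own proof: contradiction via failure of the $\infty$-frame property, weak$^*$ compactness in $M^\infty$ to produce a nonzero limit $h$, compactness of sets via Lemma~\ref{lemma_compactness} to produce a weak limit $\Gamma$ of $\Lambda_n-z_n$, and Lemma~\ref{lemma_reg_key}(b) (using the relative density of $\Lambda$ from Lemma~\ref{lemma_set_must_be}) to place $\Gamma$ in $W(\Lambda)$ and contradict Theorem~\ref{th_char_frame}(vi). The only cosmetic difference is that the paper invokes Theorem~\ref{th_char_frame} rather than Corollary~\ref{coro_main_mol} for the $p=\infty$ reduction, and writes the points of $\Lambda_n$ as $\tau_n(\lambda)$ with $\lambda\in\Lambda$.
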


 Theorem \ref{th_main_intro}(a)  of  the Introduction now follows by
 combining  Theorem~\ref{th_per_frame} and Lemma
 \ref{lemma_reg_conv}. Note that in Theorem~\ref{th_main_intro} we may
 assume without loss of generality that  $T_n(0)=0$, because the
 deformation problem is invariant under translations. 
\begin{proof}

  Suppose that $\mathcal{G}(g,\Lambda)$ is a frame. 
According to Lemma \ref{lemma_set_must_be},   $\Lambda$ is relatively separated and relatively dense.
Now suppose that the conclusion does not hold. By passing to a
subsequence we may assume that  $\mathcal{G}(g,\Lambda_n)$ fails to be
a frame for all $n\in \bN $. 

By Theorem \ref{th_char_frame} every $\cG (g, \Lambda _n)$ also fails
to be an $\infty $-frame for $M^\infty (\rd )$.  It follows that for every $n \in \Nst$ there exist $f_n \in
M^\infty(\Rdst)$ such that
$\norm{V_g f_n}_{\infty}=1$ and
\begin{align*}
\norm{C_{g, \Lambda_n}(f_n)}_{\ell^\infty(\Lambda_n)}=
\sup_{\lambda \in \Lambda} \abs{V_g f_n(\mapn{\lambda})} \longrightarrow 0,
\qquad \mbox{as } n \longrightarrow \infty.
\end{align*}
For each $n \in \Nst$, let $z_n \in \Rtdst$ be such that $\abs{V_g f_n(z_n)} \geq 1/2$ and let us consider
$h_n:=\pi(-z_n) f_n$. By passing to a subsequence we may assume that $h_n \rightarrow h$ in $\sigma(M^\infty,M^1)$
for some function $h \in M^\infty$. Since $\abs{V_g h_n(0)}=\abs{V_g
  f_n(z_n)} \geq 1/2$, it follows that $\abs{V_g h(0)} \geq 1/2$ and
the weak$^*$-limit  $h$ is not zero. 

 In addition, by Lemma \ref{lemma_dens}
\begin{align*}
\limsup_{n\rightarrow \infty} \rel(\Lambda_n - z_n) = \limsup_{n\rightarrow \infty} \rel(\Lambda_n) < \infty.
\end{align*}
Hence, using Lemma \ref{lemma_compactness} and passing to a further subsequence,
we may assume that $\Lambda_n - z_n \weakconv \Gamma$, for some relatively separated set $\Gamma \subseteq \Rtdst$.
Since $\Lambda$ is relatively dense,  Lemma \ref{lemma_reg_key}
guarantees that $\Gamma \in W(\Lambda)$.

Let $\gamma \in \Gamma$ be arbitrary.  Since $\Lambda_n - z_n \weakconv \Gamma$,
there exists a sequence $\sett{\lambda_n}_{n\in\Nst} \subseteq \Lambda$ such that $\mapn{\lambda_n} - z_n \rightarrow
\gamma$. By Lemma \ref{lemma_stft}, the fact that $h_n \rightarrow h$ in $\sigma(M^\infty,M^1)$ implies that $V_g h_n
\rightarrow V_g h$ uniformly on compact sets. Consequently, by \eqref{eq_tf_stft},
\begin{align*}
\abs{V_g h(\gamma)} = \lim_n \abs{V_g h_n(\mapn{\lambda_n} - z_n)} = \lim_n \abs{V_g f_n(\mapn{\lambda_n})}=0. 
\end{align*}
Hence, $h \not\equiv 0$ and $V_g h \equiv 0$ on $\Gamma \in W(\Lambda)$. According to Theorem
\ref{th_char_frame}(vi), $\mathcal{G}(g,\Lambda)$ is not a frame, thus
contradicting the initial assumption.  
\end{proof}

The corresponding deformation result for Gabor Riesz sequences reads
as follows. 

\begin{theo}
\label{th_per_riesz}
Let $g \in M^1(\Rdst)$,  $\Lambda \subseteq \Rtdst$,  and assume that
$\cG (g,\Lambda )$ is a Riesz sequence in  
 $\lrd $. 

If $\Lambda _n$ is  Lipschitz convergent to $\Lambda $, then $\cG (g,
\Lambda _n)$ is a Riesz sequence  for all sufficiently large $n$. 
\end{theo}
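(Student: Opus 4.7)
The plan is to argue by contradiction, exactly paralleling the strategy of Theorem~\ref{th_per_frame} but transposed to the synthesis side. The two ingredients I will combine are the characterization of Gabor Riesz sequences without inequalities (Theorem~\ref{th_char_riesz}) and the compatibility between Lipschitz convergence and weak limits encoded in Lemma~\ref{lemma_reg_key}(a).

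First I would extract uniform geometric control of the deformed sets. Since $\cG(g,\Lambda)$ is a Riesz sequence, Lemma~\ref{lemma_set_must_be}(b) gives $\sep(\Lambda)>0$, and Lemma~\ref{lemma_dens}(a) upgrades this to $\liminf_n \sep(\Lambda_n)>0$. After discarding finitely many indices the family $\{\Lambda_n\}$ is therefore uniformly separated, which is precisely the hypothesis required to invoke Lemma~\ref{lemma_zero_seq}.

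Next I would assume, for contradiction, that $\cG(g,\Lambda_n)$ fails to be a Riesz sequence along a subsequence (still written $n$). By the equivalence (i)$\Leftrightarrow$(iii) in Theorem~\ref{th_char_riesz}, the synthesis operator $C^*_{g,\Lambda_n}\colon\ell^\infty(\Lambda_n)\to M^\infty(\Rdst)$ is not bounded below, so there exist coefficients $c^n\in\ell^\infty(\Lambda_n)$ with $\|c^n\|_\infty=1$ and
$$
\bignorm{\sum_{\mu\in\Lambda_n} c^n_\mu\,\pi(\mu)g}_{M^\infty}\longrightarrow 0.
$$
Lemma~\ref{lemma_zero_seq} then produces, on a further subsequence $(n_k)$, pivots $\nu_{n_k}\in\Lambda_{n_k}$, a separated limit set $\Gamma\subseteq\Rtdst$ with $\Lambda_{n_k}-\nu_{n_k}\weakconv\Gamma$, and a nonzero $c\in\ell^\infty(\Gamma)$ satisfying $\sum_{\lambda\in\Gamma}c_\lambda\,\pi(\lambda)g=0$, i.e., $c\in\ker C^*_{g,\Gamma}$.

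The decisive step, and the only place where the Lipschitz structure really enters, is to identify $\Gamma$ as a weak limit of translates of the \emph{original} set $\Lambda$, not merely of translates of the deformed sets. Since $\nu_{n_k}\in\Lambda_{n_k}=\map_{n_k}(\Lambda)$, I can write $\nu_{n_k}=\map_{n_k}(\lambda_{n_k})$ for some $\lambda_{n_k}\in\Lambda$, and Lemma~\ref{lemma_reg_key}(a), applied along the Lipschitz-convergent subsequence, yields $\Gamma\in W(\Lambda)$. Then $c$ is a nonzero element of $\ker C^*_{g,\Gamma}$ for a weak limit $\Gamma\in W(\Lambda)$, directly contradicting condition (vi) of Theorem~\ref{th_char_riesz} for $\cG(g,\Lambda)$. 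I expect this identification $\Gamma\in W(\Lambda)$ to be the only nontrivial point: the Riesz-sequence side of the deformation result reduces cleanly to it, and it is precisely the feature that distinguishes Lipschitz convergence from the weaker convergence $\Lambda_n\weakconv\Lambda$ (which would not suffice, as $\Lambda_n-\nu_{n_k}$ need have no relation to translates of $\Lambda$ without the uniform control \regone--\regtwo).
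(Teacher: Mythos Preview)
Your proposal is correct and follows essentially the same route as the paper's proof: contradiction via Theorem~\ref{th_char_riesz}(iii), uniform separation from Lemma~\ref{lemma_dens}(a), the extraction of a bad weak limit via Lemma~\ref{lemma_zero_seq}, and the identification $\Gamma\in W(\Lambda)$ through Lemma~\ref{lemma_reg_key}(a), contradicting Theorem~\ref{th_char_riesz}(vi). The only cosmetic difference is that you name the pivots $\nu_{n_k}$ first and then unpack them as $\map_{n_k}(\lambda_{n_k})$, whereas the paper writes them in that form from the start.
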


\begin{proof}
  Assume that $\mathcal{G}(g,\Lambda)$ is a Riesz sequence. Lemma~\ref{lemma_set_must_be}
 implies that $\Lambda$ is separated. With Lemma \ref{lemma_dens} we
 may  extract   a   subsequence such that each $\Lambda_n$ is separated with a uniform separation constant, i.e.,
\begin{align}
\label{eq_unif_sep}
\inf_{n \geq 1} \sep(\Lambda_n) >0.
\end{align}
We argue by contradiction and assume  that the conclusion  does not hold. By passing to
a further   subsequence,  we may assume that 
$\mathcal{G}(g,\Lambda_n)$ fails to be a Riesz sequence for all $n\in
\bN$. As a consequence of  Theorem~\ref{th_char_riesz}(iii), there exist sequences $c^n \in 
\ell^\infty(\Lambda_n)$ such that $\norm{c^n}_\infty=1$
and
\begin{align}
\label{eq_goestozero}
\norm{C^*_{g,\Lambda_n}(c^n)}_{M^\infty}=
\Bignorm{\sum_{\lambda \in \Lambda} c^n_{\mapn{\lambda}} \pi(\map_n{(\lambda}))g}_{M^\infty} \longrightarrow 0,
\mbox{ as }n \longrightarrow \infty.
\end{align}
Thus $g, \Lambda _n, c^n$ satisfy the assumptions of
Lemma~\ref{lemma_zero_seq}. The conclusion of
Lemma~\ref{lemma_zero_seq} yields a 
subsequence $(n_k)$, a separated set $\Gamma \subseteq \Rtdst$, a 
non-zero sequence $c \in \ell^\infty(\Gamma)$, and a sequence of points $\sett{\lambda_{n_k}: k \geq 1} \subseteq
\Lambda$
such that 
\begin{align*}
\Lambda_{n_k} - \tau _{n_k} (\lambda_{n_k}) \weakconv \Gamma
\end{align*}
and 
$$\sum _{\gamma  \in \Gamma} c_\gamma \pi (\gamma ) g =  C^*_{g,
  \Gamma} (c)=0 \, .
$$
By Lemma \ref{lemma_reg_key}, we conclude that $\Gamma \in
W(\Lambda)$.  According to condition (vi) of  Theorem \ref{th_char_riesz},
 $\mathcal{G}(g,\Lambda)$ is not  a Riesz sequence, which is a  contradiction.
\end{proof}
\begin{rem} Uniformity of the bounds: {\rm 
Under the assumptions of Theorem \ref{th_per_frame} it follows that there exists $n_0 \in \Nst$
and constants $A,B>0$ such that for $n \geq n_0$,
\begin{align*}
A \norm{f}_2^2 \leq \sum_{\lambda \in \Lambda} \abs{\ip{f}{\pi(\mapn{\lambda})g}}^2 
\leq B \norm{f}_2^2,
\qquad f \in L^2(\Rdst).
\end{align*}
The uniformity of the upper bound follows from the fact that $g \in M^1(\Rdst)$ and
$\sup_n \rel(\Lambda_n) < \infty$ (cf. Section \ref{sec_maps}).
For the lower bound, note that the proof
of Theorem \ref{th_per_frame} shows that there exists a constant
$A'>0$ such that, for $n \geq n_0$, 
\begin{align*}
A' \norm{f}_{M^\infty} \leq \sup_{\lambda \in \Lambda} 
\abs{\ip{f}{\pi(\mapn{\lambda})g}},
\qquad f \in M^\infty(\Rdst).
\end{align*}
 This property  implies a uniform $L^2$-bound as is made explicit in Remarks~\ref{rem_unif_loc}
and \ref{rem_unif_loc_2}.}
\end{rem}

To show why local preservation of differences is related to the
stability of Gabor frames, let us consider the following example.
\begin{example}
\label{ex_go_wrong}
{\rm From~\cite{asfeka13} or from Theorem~\ref{th_per_frame} we know
  that   if $g\in M^1(\rd )$ and  $\mathcal{G}(g,\Lambda)$ is a frame, then  $\cG
(g, (1+1/n) \Lambda)$ is also a frame  for sufficiently  large
$n$. For every $n$ we now construct a deformation
of the form $\mapn{\lambda} := \alpha_{\lambda,n} \lambda$, where
$\alpha_{\lambda,n}$ is either $1$ or $(1+1/n)$ with roughly half of
the multipliers equal to $1$.  Since only a subset
of $\Lambda $ is moved,  we would think that this deformation is
``smaller'' than the full dilation $\lambda \to (1+\tfrac{1}{n})
\lambda $,   and thus it   should preserve the spanning properties   of the corresponding Gabor
system. Surprisingly, this is completely false. We now indicate how
the coefficients $\alpha _{\lambda ,n}$ need to be chosen. 
Let $\rdd = \bigcup _{l=0}^\infty B_l$ be a partition of $\rdd $ into
the annuli
\begin{align*}
&B_l = \{ z\in \rdd : (1+\tfrac{1}{n})^l \leq |z| <
(1+\tfrac{1}{n})^{l+1} \}, \qquad l \geq 1,
\\
&B_0 := \{ z\in \rdd : |z| < (1+\tfrac{1}{n}) \}
\end{align*}
and define 
$$
\alpha _{\lambda ,n} =
\begin{cases}
  1 & \text{ if } \lambda \in B_{2l}, \\
 1+\tfrac{1}{n} & \text{ if } \lambda \in B_{2l+1}  \, .
\end{cases}
$$
Since $(1+\tfrac{1}{n} ) B_{2l+1} = B_{2l+2}$, the deformed set $\Lambda _n=
\tau _n (\Lambda
) = \{\alpha _{\lambda ,n} \lambda : \lambda \in \Lambda \} $ is contained in $\bigcup
_{l=0}^\infty B_{2l}$ and thus contains arbitrarily large holes. So
$\rho (\Lambda _n) = \infty $ and $D^- (\Lambda _n) = 0$.
Consequently the 
corresponding Gabor system $\cG (g,\Lambda _n)$ cannot be a frame. 
See Figure~\ref{figweird} for a plot of this deformation in dimension $1$.
}
\end{example}
\begin{figure}[!t]
\includegraphics[width=0.6\textwidth,natwidth=442,natheight=252]{./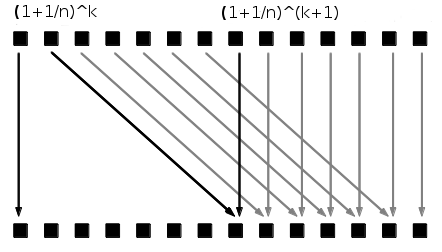}
\caption{A deformation ``dominated'' by the dilation $\lambda \to (1+1/n)\lambda $. }
\label{figweird}
\end{figure}

\section{Appendix}
\label{sec_app}
We finally prove Theorem \ref{th_main_mol}. Both the statement and the  proof
are modeled on 
Sj\"ostrand's treatment of Wiener's  lemma for convolution-dominated
matrices \cite{sj95}.   Several stability results
are built  on his techniques \cite{su07-5, albakr08, shsu09, te10,
  su10-2}.  The following proposition
exploits the flexibility of Sj\"ostrand's methods to 
transfer lower bounds for a matrix  from one value of $p$ to all
others, under the assumption that the entries of 
the matrix decay away from a collection of lines.
\subsection{A variation of Sj\"ostrand's Wiener-type lemma}
Let $G$ be the group $\group := \sett{-1,1}^d$ with coordinatewise
multiplication,  and let $\sigma \in \group$ act on $\Rdst$ by
$\sigma x := (\sigma_1 x_1, \ldots, \sigma_d x_d)$. The group inverse of
$\sigma \in \group$ is $\sigma^{-1}=(-\sigma_1,\ldots,-\sigma_d)$
and the orbit of $x\in \Rdst$ by $\group$
is $\group \cdot x := \sett{\sigma x: \sigma \in \group}$. Consequently,
$\zd = \bigcup _{k \in \bN _0^d } G\cdot k$ is a disjoint union. 
  We note that the cardinality of $G\cdot x$ depends on the number of non-zero
coordinates of $x\in \rd $. 
\begin{prop}
\label{prop_loc}
Let $\Lambda$ and $\Gamma$ be relatively separated subsets of $\Rdst$. Let $A \in \bC^{\Lambda \times \Gamma}$ be a
matrix such that
\begin{align*}
\abs{A_{\lambda,\gamma}} \leq
\sum_{\sigma \in \group} \env(\lambda-\sigma \gamma)
\qquad \lambda \in \Lambda, \gamma \in \Gamma,
\qquad \mbox{for some } \env \in W(L^\infty,L^1)(\Rdst).
\end{align*}
Assume that  there exist a $p\in [1,\infty]$ and  $C_0>0$, such that 
\begin{equation}
  \label{eq:c4}
  \norm{Ac}_{p} \geq C_0 \norm{c}_{p} \quad \quad \text{ for all } c\in \ell
  ^{p} (\Gamma) \, .
\end{equation}
 Then there exists a constant $C >0$ independent of $q$ such that,
 \textrm{for all $q \in [1,  \infty]$}
 \begin{equation}
   \label{eq:c5}
  \norm{Ac}_{q} \geq C \norm{c}_q \quad \quad \text{ for all } c\in \ell
  ^q (\Gamma) \, .
    \end{equation}
In other words, if $A $ is bounded below on some $\ell ^p$, then $A$
is bounded below on  $\ell ^p$ for all $p\in [1,\infty ]$. 
\end{prop}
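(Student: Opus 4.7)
The plan is to adapt Sj\"ostrand's Wiener-type machinery for convolution-dominated matrices to the orbit-decay class of Proposition~\ref{prop_loc}. Matrices $A$ with $|A_{\lambda,\gamma}| \leq \sum_{\sigma \in \group} \env(\lambda - \sigma\gamma)$ and $\env \in W(L^\infty,L^1)$ should be viewed as a natural variant of the Sj\"ostrand class, with translation decay replaced by decay along a finite family of lines, namely the orbits of $\gamma$ under the group $\group = \{-1,1\}^d$.

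First, I would verify that this class is closed under multiplication, adjoints, and Schur-type boundedness. Given $A$ with envelope $\env$, a direct calculation should show that $A^*A$, viewed as a matrix on $\Gamma \times \Gamma$, satisfies an analogous orbit estimate
\begin{equation*}
|(A^*A)_{\gamma,\gamma'}| \leq \sum_{\tau \in \group} \Psi(\gamma - \tau\gamma')
\end{equation*}
for some $\Psi \in W(L^\infty,L^1)$. The crucial algebraic identity is $\sigma\gamma - \sigma'\gamma' = \sigma(\gamma - \tau\gamma')$ with $\tau = \sigma^{-1}\sigma' \in \group$, so that the double orbit sum reorganizes into the required form; the envelope $\Psi$ absorbs the $\lambda$-sum of products $\env(\lambda - \cdot)\env(\lambda - \cdot)$ via relative separation of $\Lambda$ and the Wiener amalgam structure. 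A parallel Schur argument shows that $\|A\|_{\ell^p \to \ell^p}$ and $\|A^*\|_{\ell^p \to \ell^p}$ are uniformly bounded in $p \in [1,\infty]$.

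Second, I would transfer the lower bound hypothesis on $A$ to invertibility of $A^*A$ and propagate it across $\ell^p$-spaces via an orbit-adapted Wiener lemma. On $\ell^2$, a lower bound on $A$ implies $A^*A$ is positive and bounded below, hence invertible. An orbit-version of Sj\"ostrand's inversion argument should then show that $(A^*A)^{-1}$ lies in the same orbit-decay class and is therefore uniformly bounded on every $\ell^q$. Once this is available, for any $q$ and $c \in \ell^q(\Gamma)$ one writes $c = (A^*A)^{-1}A^*(Ac)$ to conclude
\begin{equation*}
\|c\|_q \;\leq\; \|(A^*A)^{-1}\|_{\ell^q \to \ell^q} \, \|A^*\|_{\ell^q \to \ell^q} \, \|Ac\|_q,
\end{equation*}
which gives the desired $q$-uniform lower bound on $A$.

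The main obstacle is starting the Sj\"ostrand machinery from a lower bound on $\ell^p$ rather than from invertibility on $\ell^2$: when $p \neq 2$, the passage from $\|Ac\|_p \geq C_0 \|c\|_p$ to invertibility of $A^*A$ on $\ell^p$ is not immediate outside the Hilbert-space setting. The cleanest resolution is to establish the orbit analogue of Sj\"ostrand's spectral invariance directly, showing that within this class the property of being bounded below is $p$-independent. This amounts to an orbit-aware localization and summation argument in the spirit of Sj\"ostrand's short-time analysis, but carrying the group $\group$ through every estimate. Once spectral invariance is in place, the lower bound on $\ell^p$ transfers immediately to $\ell^2$, reducing matters to the Hilbert-space case handled above.
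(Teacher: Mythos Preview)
Your proposal has a genuine gap: it is circular. You correctly identify the main obstacle---that a lower bound on $\ell^p$ for $p\neq 2$ does not directly give invertibility of $A^*A$ on $\ell^2$---but your proposed resolution is to ``establish the orbit analogue of Sj\"ostrand's spectral invariance directly, showing that within this class the property of being bounded below is $p$-independent.'' That \emph{is} the proposition. Once you have that, the entire $A^*A$ detour is superfluous: $p$-independence of lower bounds is exactly the conclusion, so there is nothing left to reduce to the Hilbert-space case. In effect your outline says ``first prove the proposition, then use it to prove the proposition.''

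The paper does carry out the direct Sj\"ostrand localization argument, and the substantive point you are missing is \emph{how} the group $\group=\{-1,1\}^d$ is carried through. The key device is a $\group$-symmetrized partition of unity: one starts from a standard partition $\sum_{k\in\Zdst}\psi(\varepsilon\cdot-k)\equiv 1$ and then folds it over the orthants by setting $\varphi^\varepsilon_k:=\sum_{j\in\group\cdot k}\psi^\varepsilon_j$, indexed by $k\in I:=\Nst_0^d$. Because each $\varphi^\varepsilon_k$ is $\group$-invariant, the Lipschitz bound improves to
\[
\abs{\varphi^\varepsilon_k(x)-\varphi^\varepsilon_k(y)}\lesssim \min\{1,\varepsilon\, d(x,\group\cdot y)\},
\]
which is precisely what is needed to absorb the orbit sum $\sum_{\sigma\in\group}\Theta(\lambda-\sigma\gamma)$ into the commutator estimate $[A,\varphi^\varepsilon_k]$. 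From there Sj\"ostrand's scheme runs as usual: one bounds the matrix $V^\varepsilon_{j,k}:=\norm{[A,\varphi^\varepsilon_k]\varphi^\varepsilon_j}_{\mathrm{Schur}}$, shows $\norm{V^\varepsilon}_{\mathrm{Schur}(I\to I)}\to 0$ as $\varepsilon\to 0^+$, and uses the resulting inequality together with a local $\ell^p\asymp\ell^q$ comparison on the finite supports of $\varphi^\varepsilon_k$ to transfer the lower bound from the given $p$ to every $q$. Your one-line placeholder ``carrying the group $\group$ through every estimate'' hides exactly this construction, which is where the work lies.
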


\begin{proof}
By considering $\sum_{y \in \group\cdot x} \env(y)$ we may assume that
$\env$ is $G$-invariant, i.e. $\env(x)=\env(\sigma x)$ for all $\sigma \in \group$.

\textbf{Step 1.} \emph{Construction of a partition of unity.}  Let $\psi \in
C^\infty(\rd)$ be $G$-invariant and such that $0 \leq \psi \leq 1$, 
$\supp(\psi) \subseteq B_2(0)$ and
\begin{align*}
\sum_{k\in\Zdst} \psi(\cdot-k) \equiv 1.
\end{align*}
For $\varepsilon>0$, define $\psi^\varepsilon_k (x):= \psi(\varepsilon x -k)$,
$I:=\Nst_0^d$, and
\begin{align*}
\varphi^\varepsilon_k := \sum_{j \in \group\cdot k} \psi^\varepsilon_j.
\end{align*}
Since $\zd = \bigcup _{k\in I} G\cdot k$ is a disjoint union,  it
follows  that
$$
\sum_{k \in I} \varphi^\varepsilon_k = \sum _{k\in I} \sum _{j \in
  G\cdot k}  \psi ^\varepsilon_j \equiv 1 \, .
$$
Thus  $\{ \varphi^\varepsilon_k : k\in I\}$ generates a
partition of unity, and it is easy to see that it has the following
additional  properties:
\begin{itemize}
\item $\Phi^\varepsilon := \sum_{k \in I} \left( \varphi^\varepsilon_k \right)^2 \asymp 1$,
\item $0 \leq \varphi^\varepsilon_k \leq 1,$
\item $\abs{\varphi^\varepsilon_k(x) - \varphi^\varepsilon_k(y)} \lesssim \varepsilon \abs{x-y},$
\item $\varphi^\varepsilon_k(x)=\varphi^\varepsilon_k(\sigma x)$ for all $\sigma \in \group$.
\end{itemize}
Combining the last three properties,  we obtain that 
\begin{align}
\label{eq_improved_lip}
\abs{\varphi^\varepsilon_k(x) - \varphi^\varepsilon_k(y)}
\lesssim \min\{1, \varepsilon d(x,\group \cdot y)\},
\end{align}
where $d(x,E) := \inf\sett{\abs{x-y}: y \in E}$.

\textbf{Step 2. } \emph{Commutators.} 
For a matrix $B  =  (B_{\lambda,\gamma})_{\lambda \in \Lambda, \gamma
  \in \Gamma} \in \bC^{\Lambda \times \Gamma}$ we denote the Schur
norm by 
\begin{align*}
\bignorm{B}_{\mbox{Schur}(\Gamma \to \Lambda)}
:= \max\Big\{
\sup_{\gamma \in \Gamma } \sum_{\lambda\in \Lambda } \abs{B_{\lambda,\gamma}},
\sup_{\lambda \in \Lambda } \sum_{\gamma \in \Gamma }  \abs{B_{\lambda,\gamma}}
\Big\}\, .
\end{align*}

Let us assume that $A$ is bounded below on $\ell^p(\Gamma )$. After multiplying
$A$ with a constant, we may  assume that
\begin{align*}
\norm{c}_p \leq \norm{Ac}_p, \qquad c \in \ell^p(\Gamma).
\end{align*}

For given  $\varepsilon>0$ and  $k \in I$  let $\varphi^\varepsilon_k
c := {\varphi^\varepsilon_k}\big| _{\Gamma} \cdot c$ denote the
multiplication operator by $\varphi ^\epsilon _k$ and
$[A,\varphi^\varepsilon_k] = A \varphi ^\epsilon _k -   \varphi
^\epsilon _k A  $ the commutator with $A$. Now 
let us estimate
\begin{align}
\nonumber
\norm{\varphi^\varepsilon_k c}_p & \leq \norm{A \varphi^\varepsilon_k c}_p
\leq \norm{\varphi^\varepsilon_k Ac}_p + \norm{[A,\varphi^\varepsilon_k] c}_p
\\
\nonumber
& \leq
\norm{\varphi^\varepsilon_k Ac}_p +
 \sum_{j\in I} \norm{[A,\varphi^\varepsilon_k]\varphi^\varepsilon_j (\Phi^\varepsilon)^{-1} \varphi^\varepsilon_jc}_p
\\
\label{eq_bound}
& \leq
\norm{\varphi^\varepsilon_k Ac}_p +
K \sum_{j\in I} V^\varepsilon_{j,k} \norm{\varphi^\varepsilon_jc}_p,
\end{align}
where $K = \max _x \Phi ^\epsilon (x)\inv $  and
\begin{align}
\label{eq_def_V}
V^\varepsilon_{j,k} := \norm{[A,\varphi^\varepsilon_k]\varphi^\varepsilon_j}_{\mbox{Schur}(\Gamma \to \Lambda)},
\qquad j,k \in \ I.
\end{align}
The goal of the following steps is to estimate the Schur norm of  the matrix $V^\epsilon $
with entries $V^\epsilon _{j,k}$ and  to show that $\norm{V^\epsilon }
_{\mathrm{Schur}(I\to I)} \to 0$ for $\epsilon \to 0+$.

\Step{Step 3}. \emph{Convergence of the entries $V^\epsilon _{j,k}$.}  We  show that 
\begin{align}
\label{eq_V_unif}
\sup_{j,k \in I} V^\varepsilon_{j,k} \longrightarrow 0,
\mbox{ as }\varepsilon \longrightarrow 0^+.
\end{align}
 We first note that the matrix entries of
$[A,\varphi^\varepsilon_k]\varphi^\varepsilon_j$, for  $j,k \in I$,  are
\begin{align*}
([A,\varphi^\varepsilon_k]\varphi^\varepsilon_j)_{\lambda, \gamma} = 
-A_{\lambda,\gamma} \varphi^\varepsilon_j(\gamma)(\varphi^\varepsilon_k(\lambda)-\varphi^\varepsilon_k(\gamma)),
\qquad \gamma \in \Gamma, \lambda \in \Lambda.
\end{align*}
Using \eqref{eq_improved_lip} and the hypothesis on $A$, we bound the
entries of the commutator by 
\begin{align*}
&\abs{([A,\varphi^\varepsilon_k]\varphi^\varepsilon_j)_{\lambda,\gamma}}
\lesssim
\sum_{\sigma \in \group} \env(\lambda-\sigma \gamma)
\min\{1, \varepsilon d(\lambda,\group\cdot\gamma)\}
\\
&\qquad \leq
\sum_{\sigma \in \group}
\env(\lambda-\sigma \gamma) \min\{1, \varepsilon \abs{\lambda-\sigma\gamma}\}.
\end{align*}
Hence, if we define $\env^\varepsilon(x) := \env(x)
\min\{1,\varepsilon\abs{x}\}$, then by \eqref{eq:c7} 
\begin{align*}
V^\varepsilon_{j,k} = 
\norm{[A,\varphi^\varepsilon_k]\varphi^\varepsilon_j}_{\mbox{Schur}(\Gamma \to \Lambda)}
\lesssim \max\{\rel(\Lambda),\rel(\Gamma)\} \norm{\env^\varepsilon}_{W(L^\infty,L^1)}.
\end{align*}
Since $\env \in W(L^\infty,L^1)$, it follows that
$\norm{\env^\varepsilon}_{W(L^\infty,L^1)} \longrightarrow 0$, as $\varepsilon \longrightarrow 0^+$.
This proves \eqref{eq_V_unif}.

\Step{Step 4}. \emph{Refined estimates for $V^\epsilon _{jk}$.}  For $s \in \Zdst$ let us define
\begin{align}
\label{eq_def_menv}
\menv^\varepsilon(s) := \sum_{t \in \Zdst : \abs{\varepsilon
    t-s}_\infty \leq 5} \sup_{z\in[0,1]^{d}+t} \abs{\env (z) }\, . 
\end{align}
\textbf{Claim:} If $|j-k|>4$ and $\varepsilon \leq 1$, then 
\begin{align*}
V^\varepsilon_{j,k} \lesssim \sum_{s \in \group \cdot j - \group \cdot
  k} \menv^\varepsilon(s)\, .
\end{align*}

If  $\abs{k-j}>4$, then $\varphi^\varepsilon_j(\gamma)
\varphi^\varepsilon_k(\gamma)=0$. Indeed, if this were  not the case, 
then $\varphi^\varepsilon_j(\gamma) \not= 0$ and $\varphi^\varepsilon_k(\gamma) \not=0$. Consequently,
$\abs{\varepsilon \gamma - \sigma j} \leq 2$ and $\abs{\varepsilon
  \gamma - \tau k} \leq 2$ 
for some $\sigma, \tau \in \group$. Hence, $d(k, \group\cdot j) \leq \abs{k-\tau^{-1}\sigma j}=\abs{\tau k-\sigma j}
\leq 4$. Since $k,j \in I$, this implies that $\abs{k-j} \leq 4$,
contradicting the assumption. 

As a consequence, for $\abs{k-j}>4$,   the matrix
entries of $[A,\varphi^\varepsilon_k]\varphi^\varepsilon_j$ simplify
to 
\begin{align*}
([A,\varphi^\varepsilon_k]\varphi^\varepsilon_j)_{\lambda,\gamma} = 
-A_{\lambda,\gamma} \varphi^\varepsilon_j(\gamma)\varphi^\varepsilon_k(\lambda),
\qquad \gamma \in \Gamma, \lambda \in \Lambda \, .
\end{align*}
Hence,  for $\abs{k-j}> 4$ we have the estimate
\begin{align*}
&\abs{([A,\varphi^\varepsilon_k]\varphi^\varepsilon_j)_{\lambda,\gamma}}
\leq 
\sum_{\sigma \in \group} \env(\lambda-\sigma \gamma)
\varphi^\varepsilon_j(\gamma)\varphi^\varepsilon_k(\lambda)
=
\sum_{\sigma \in \group} \env(\lambda-\sigma \gamma)
\varphi^\varepsilon_j(\sigma \gamma)\varphi^\varepsilon_k(\lambda).
\end{align*}

Consequently, for  $\abs{k-j}>4$ we have 
\begin{align*}
&\sup_{\lambda \in \Lambda} \sum_{\gamma \in \Gamma}
\abs{([A,\varphi^\varepsilon_k]\varphi^\varepsilon_j)_{\lambda,\gamma}}
\leq
\sup_{\lambda \in \Lambda} \sum_{\gamma \in \Gamma}
\sum_{\sigma \in \group} \env(\lambda-\sigma \gamma)
\varphi^\varepsilon_j(\sigma \gamma)\varphi^\varepsilon_k(\lambda)
\\
&\quad \lesssim
\sup_{\lambda \in \Lambda} \sum_{\gamma \in \group \cdot \Gamma}
\env(\lambda-\gamma)
\varphi^\varepsilon_j(\gamma)\varphi^\varepsilon_k(\lambda).
\end{align*}
If $\varphi^\varepsilon_j(\gamma) \varphi^\varepsilon_k(\lambda) \not = 0$, then
$\abs{\varepsilon \gamma - \sigma j} \leq 2$ and 
$\abs{\varepsilon \lambda - \tau k} \leq 2$  for some $\sigma, \tau \in \group$.
The triangle inequality implies that
\begin{align}
\label{eq_in}
d(\varepsilon (\lambda - \gamma), \group \cdot j - \group \cdot k) \leq 4.
\end{align}
Hence, we further estimate
\begin{align}
\label{eq_four_terms}
&\sup_{\lambda \in \Lambda} \sum_{\gamma \in \Gamma}
\abs{([A,\varphi^\varepsilon_k]\varphi^\varepsilon_j)_{\lambda,\gamma}}
\lesssim
\sup_{\lambda \in \Lambda} \sum_{s \in \group \cdot j - \group \cdot k}
\sum_{
\stackrel{\gamma \in \group\cdot\Gamma,}{\abs{\varepsilon (\lambda - \gamma)-s}\leq 4}
}
\env(\gamma-\lambda).
\end{align}

For fixed  $s \in \group \cdot j - \group \cdot k$ and $\varepsilon \leq 1$
we bound the inner sum in \eqref{eq_four_terms} by 
\begin{align*}
&\sum_{\gamma \in \group \cdot \Gamma: \abs{\varepsilon (\lambda - \gamma) - s} \leq 4} \env(\gamma-\lambda)
\leq \sum_{t\in\Zdst}
\sum_{
\stackrel{\gamma \in \group \cdot \Gamma: \abs{\varepsilon (\lambda - \gamma) - s} \leq 4}{(\lambda - \gamma) \in
[0,1]^{d}+t}}
\env(\gamma-\lambda)
\\
&\quad \lesssim \rel(\lambda-\group\cdot\Gamma) \sum_{t\in\Zdst :
  \abs{\varepsilon t-s}_\infty \leq 5} \sup_{z\in [0,1]^{d}+t}
\abs{\env (z)}
\\
&\quad \lesssim \rel(\Gamma) \sum_{t\in\Zdst :  \abs{\varepsilon
    t-s}_\infty \leq 5  } \sup_{z\in [0,1]^{d}+t}
\abs{\env (z) }
\\
&\quad\lesssim \menv^\varepsilon(s).
\end{align*}
Substituting  this bound in \eqref{eq_four_terms}, we  obtain
\begin{align*}
\sup_{\lambda \in \Lambda} \sum_{\gamma \in \Gamma}
\abs{([A,\varphi^\varepsilon_k]\varphi^\varepsilon_j)_{\lambda,\gamma}}
\lesssim
\sum_{s \in \group \cdot j - \group \cdot k} \menv^\varepsilon(s).
\end{align*}
Inverting the roles of $\lambda$ and $\gamma$ we obtain a similar
estimate, and the combination yields 
\begin{align}
\label{eq_V_kj}
V^\varepsilon_{j,k} \lesssim \sum_{s \in \group \cdot j - \group \cdot k} \menv^\varepsilon(s),
\quad
\mbox{ for }\abs{j-k}>4 \mbox{ and }\varepsilon \leq 1,
\end{align}
as claimed.

\Step{Step 5}. \emph{Schur norm of $V^\epsilon $.}  Let us show that $\norm{V^\epsilon
}_{\mathrm{Schur}(I\to I)} \to 0$, i.e., 
\begin{align}
\label{eq_schur_jk}
\sup_{k\in I} \sum_{j\in I} V^\varepsilon_{j,k}, \quad
\sup_{j\in I} \sum_{k\in I} V^\varepsilon_{j,k} \longrightarrow 0,
\mbox{ as }\varepsilon \longrightarrow 0^+.
\end{align}
We only treat  the first limit; the second limit is analogous.
Using the definition of  $\menv^\varepsilon$ from \eqref{eq_def_menv}
and the fact $\Theta \in W(L^\infty , L^1)$, we obtain  that
\begin{align}
\label{bound_menv}
\sum_{s\in\Zdst,\abs{s}>6\sqrt{d}} \menv^\varepsilon(s) 
\leq
\sum_{s\in\Zdst,\abs{s}_\infty>6} \menv^\varepsilon(s) 
\lesssim
\sum_{t \in \Zdst, \abs{t}_\infty>1/\varepsilon}
\sup_{z\in[0,1]^{d}+t} \abs{\env (z) }
\longrightarrow 0,
\mbox{ as }\varepsilon \longrightarrow 0^+.
\end{align}
Fix $k \in I$ and use \eqref{eq_V_kj} to estimate
\begin{align*}
&\sum_{j: \abs{j-k} > 6\sqrt{d}} V^\varepsilon_{j,k} \lesssim
\sum_{j: \abs{j-k} > 6\sqrt{d}} \sum_{s \in \group \cdot j - \group \cdot k}
\menv^\varepsilon(s)
\leq
\sum_{\sigma,\tau \in \group}
\sum_{j: \abs{j-k} > 6\sqrt{d}} \menv^\varepsilon(\sigma j - \tau k).
\end{align*}
If $\abs{j-k} > 6\sqrt{d}$ and $j,k\in I$, then also 
$\abs{s}=\abs{\sigma j - \tau k}>6\sqrt{d}$ for all $\sigma , \tau \in
G$. Hence we obtain the bound 
\begin{align*}
\sum_{j\in I: \abs{j-k} > 6\sqrt{d}} V^\varepsilon_{j,k} \lesssim
\sum_{\sigma,\tau \in \group}
\sum_{
\stackrel{s \in \Zdst}{\abs{s}>6\sqrt{d}}} \menv^\varepsilon(s)
\lesssim
\sum_{
\stackrel{s \in \Zdst}{\abs{s}>6\sqrt{d}}} \menv^\varepsilon(s).
\end{align*}
For the sum over $\{j: \abs{j-k} \leq 6 \sqrt{d}\}$ we use the 
bound 
\begin{align*}
&\sum_{j: \abs{j-k} \leq 6\sqrt{d}} V^\varepsilon_{j,k}
\leq
\#\{j: \abs{j-k} \leq 6\sqrt{d}\} \sup_{s,t} V^\varepsilon_{s,t}
\lesssim \sup_{s,t} V^\varepsilon_{s,t}.
\end{align*}
Hence,
\begin{align*}
\sum_{j \in I } V^\varepsilon_{j,k}
\lesssim \sup_{s,t} V^\varepsilon_{s,t} + \sum_{\abs{s}>6\sqrt{d}} \menv^\varepsilon(s),
\end{align*}
which tends to 0 uniformly in  $k$ as $\varepsilon \rightarrow 0^+$
by \eqref{eq_V_unif} and \eqref{bound_menv}. 

\Step{Step 6}. \emph{The stability estimate.} According to the previous step  we may choose $\varepsilon >0$
such that 
\begin{align*}
\norm{V^\epsilon a}_q \leq \frac{1}{2K} \norm{a}_q, \qquad a\in \ell ^q (I)
\end{align*}
uniformly for all $q\in [1,\infty ]$. Using this bound in \eqref{eq_bound},
we obtain that 
\begin{align*}
\left(\sum_{k \in I} \norm{\varphi^\varepsilon_k c}_p^q\right)^{1/q}
\leq
\left(\sum_{k \in I}\norm{\varphi^\varepsilon_kAc}_p^q\right)^{1/q}
+ 1/2 \left(\sum_{k \in I} \norm{\varphi^\varepsilon_k
c}_p^q\right)^{1/q},
\end{align*}
with the usual modification for $q=\infty$. Hence,
\begin{align}
\label{eq_sj_a}
\left(\sum_{k \in I} \norm{\varphi^\varepsilon_k c}_p^q\right)^{1/q}
\leq
2 \left(\sum_{k \in I}\norm{\varphi^\varepsilon_k Ac}_p^q\right)^{1/q}.
\end{align}

\Step{Step 7}. \emph{Comparison of $\ell ^p$-norms.} Let us show that for every $1 \leq q \leq \infty$,
\begin{align}
\label{eq_sj_b}
\left(\sum_{k \in I} \norm{\varphi^\varepsilon_k a}_p^q \right)^{1/q}
\asymp \norm{a}_q,
\qquad a \in \ell^q(\Gamma),
\end{align}
with constants independent of $p,q$, and the usual modification for $q=\infty$.

First note that for fixed $\varepsilon >0$ 
\begin{align*}
N := \sup _{k\in I}\# \supp({\varphi^\varepsilon_k}\big| _{\Gamma}) 
= \sup_{k\in I} \#\set{\gamma \in \Gamma}{\varphi^\varepsilon_k(\gamma) \not= 0} 
< \infty.
\end{align*}
Then for  $q \in [1,\infty]$ we have 
\begin{align*}
\norm{\varphi^\varepsilon_k a}_p
\leq \norm{\varphi^\varepsilon_k a}_1
\leq N \norm{\varphi^\varepsilon_k a}_\infty
\leq N \norm{\varphi^\varepsilon_k a}_q,
\qquad a \in \ell^\infty(\Gamma),
\end{align*}
and similarly
\begin{align*}
\norm{\varphi^\varepsilon_k a}_q
\leq N \norm{\varphi^\varepsilon_k a}_p,
\qquad a \in \ell^\infty(\Gamma).
\end{align*}
As a consequence,
\begin{align}
\label{eq_sj_pq}
\Big( \sum_{k \in I} \norm{\varphi^\varepsilon_k a}_p^q \Big)^{1/q}
\asymp
\Big( \sum_{k \in I} \norm{\varphi^\varepsilon_k a}_q^q \Big)^{1/q},
\qquad a \in \ell^q(\Gamma)
\end{align}
with constants independent of $p$ and $q$ and with  the usual modification for $q=\infty$.

Next 
note that 
\begin{align}
\label{eq_sj_covnum}
\covnum := \sup_{\varepsilon>0} \sup_{x \in \Rdst} \#\set{k \in
  I}{\varphi^\varepsilon_k(x) \not= 0} = \sup_{\varepsilon>0} \sup_{x
  \in \Rdst} \# \{k \in 
  I\cap B_2(\varepsilon x)\}  <\infty.
\end{align}
because  $\supp(\psi) \subseteq B_2(0)$. 
So we  obtain the following simple bound  for all $x \in \Rdst$: 
\begin{align*}
1 = \sum_{k\in I} \varphi^\varepsilon_k(x)
= \sum_{k\in I: \varphi^\varepsilon_k(x) \not= 0} \varphi^\varepsilon_k(x)
\leq \covnum \, \sup_{k \in I} \varphi^\varepsilon_k(x).
\end{align*}
Therefore, for all $x \in \Rdst$, 
\begin{align}
\label{eq_sj_partq}
\frac{1}{\covnum} \leq \sup _{k\in I} \varphi^\varepsilon_k(x)   \leq
\Big( \sum_{k\in I} (\varphi^\varepsilon_k(x))^q \Big)^{1/q} 
\leq \sum_{k\in I} \varphi^\varepsilon_k(x)  = 1 \, .
\end{align}

If $q<\infty$ and  $a \in \ell^q(\Gamma)$, then 
\begin{align*}
\frac{1}{\covnum^q} \sum_{\gamma \in \Gamma} \abs{a_\gamma}^q
\leq \sum_{\gamma \in \Gamma} \sum_{k\in I} (\varphi^\varepsilon_k(\gamma))^q \abs{a_\gamma}^q
\leq \sum_{\gamma \in \Gamma} \abs{a_\gamma}^q,
\end{align*}
which implies that
\begin{align}
\label{eq_sj_qq}
\Big(\sum_{k \in I} \norm{\varphi^\varepsilon_k a}_q^q\Big)^{1/q} =
\Big(\sum_{\gamma \in \Gamma} \sum_{k\in I} (\varphi^\varepsilon_k(\gamma))^q
\abs{a_\gamma}^q \Big)^{1/q} 
\asymp \norm{a}_q,
\end{align}
with constants independent of $q$. The corresponding statement for $q=\infty$ follows similarly.
Finally, the combination of \eqref{eq_sj_pq} and \eqref{eq_sj_qq} yields \eqref{eq_sj_b}.

\Step{Step 8}. We finally combine  the norm
equivalence~\eqref{eq_sj_b} with   \eqref{eq_sj_a} 
and  deduce that for all $1 \leq q \leq \infty$
\begin{align*}
\norm{c}_q \lesssim \norm{Ac}_q,
\end{align*}
with a constant independent of $q$. This completes the proof.
\end{proof}
\begin{rem}
\label{rem_unif_loc}
{\rm  We emphasize that the lower bound guaranteed by Proposition \ref{prop_loc} is
  uniform for all $p$. The constant depends only on the decay
  properties of the  envelope $\Theta$, the lower bound for the given  value of $p$,  and 
on upper bounds for the relative separation of the index sets.}
\end{rem}
\begin{rem}{\rm 
  Note that in the special case $d=1$ and  $\Gamma = \Lambda = \bZ $,
  the assumption of Theorem~\ref{prop_loc} says that
$$
|A_{\lambda, \gamma } | \leq  \Theta (\lambda - \gamma ) + \Theta
(\lambda +\gamma ) \qquad \forall \lambda, \gamma \in \bZ \, ,
$$
i.e., $A$ is dominated by the sum of a Toeplitz matrix and a Hankel
matrix. }
\end{rem}
\subsection{Wilson bases}
A Wilson basis associated with a window $g \in \LtRd$ is an orthonormal basis of $\LtRd$
of the form $\mathcal{W}(g)=\sett{g_\gamma: \gamma=(\gamma_1,\gamma_2) \in \Gamma}$ with
$\Gamma \subseteq \tfrac{1}{2} \Zdst \times \Nst^d_0$ and
\begin{align}
\label{eq_wilson}
g_\gamma = \sum_{\sigma \in \sett{-1,1}^d} \alpha_{\gamma,\sigma} \pi(\gamma_1,\sigma \gamma_2) g,
\end{align}
where $\alpha_{\gamma,\sigma} \in \bC$, $\sup_{\gamma,\sigma} \abs{\alpha_{\gamma,\sigma}} < +\infty$,
and, as before, $\sigma x = (\sigma_1 x_1, \ldots, \sigma_d x_d)$, $x \in
\Rdst$.

There exist Wilson bases associated to functions $g$ in the Schwartz class
\cite{dajajo91} (see also \cite[Chapters 8.5 and 12.3]{gr01}). In this case $\mathcal{W}(g)$
is a $p$-Riesz sequence and a $p$-frame for $M^p(\Rdst)$ for all $p
\in [1,\infty]$. This means that the associated coefficient operator
$C_{\cW}$ defined by $C_{\cW}  =  (\ip{f}{g_\gamma})_\gamma $ is an
isomorphism from $M^p(\rd ) $ onto $ \ell^p(\Gamma)$ for every $p\in
[1,\infty ]$ and that the synthesis operator $C_{\cW}^*c = C_{\cW
}\inv c=  \sum_\gamma
c_\gamma g_\gamma$ is an  isomorphism from $\ell ^p(\Gamma )$ onto
$M^p(\rd )$  for all $p\in [1,\infty ]$. 
(For $p=\infty$ the series converge in the weak* topology).

\subsection{Proof of Theorem \ref{th_main_mol}}
\begin{proof}
Let $\mathcal{W}(g)=\sett{g_\gamma: \gamma \in \Gamma}$ be a Wilson basis with $g \in M^1(\Rdst)$
and $\Gamma \subseteq \tfrac{1}{2} \Zdst \times \Nst^d_0$.

 (a)  We assume that $\{ f_\lambda : \lambda \in \Lambda
\}$ is a set of \tf\ molecules with associated 
coefficient  operator $S$, $Sf := (\ip{f}{f_\lambda} )_{\lambda \in \Lambda}.$

We need to show that if $S$ is bounded below on $M^p(\rd )$  for some
$p \in [1, \infty]$, then it is bounded below for all 
$p \in [1, \infty]$. Since the synthesis operator $C_{\cW }^*$
associated with the Wilson basis  $\mathcal{W}(g)$
is an isomorphism for all $1 \leq p \leq \infty$,
$S$ is bounded below on $M^p(\rd )$, 
if and only if    $SC_{\cW }^* $ is bounded below on $\ell ^p(\Lambda
)$. Thus  it suffices to show that $SC^*_{\cW}$ is bounded below for some
$p \in [1,\infty]$ and then apply  Proposition~\ref{prop_loc}.  The operator $SC^*_{\cW}$  is
represented by the 
matrix $A$ with entries 
\begin{align*}
A_{\lambda, \gamma} := \ip{g_\gamma}{f_\lambda},
\qquad
\lambda \in \Lambda, \gamma \in \Gamma.
\end{align*}
In order to apply Proposition \ref{prop_loc} we provide an adequate envelope.
Let $\Phi$ be the function from \eqref{eq_env_mol},
$\Phi^\vee(z):=\Phi(-z)$ and $\env := \Phi^\vee * \abs{V_g
  g}$. Then    $\env \in
W(L^\infty,L^1)(\Rtdst)$   by Lemma~\ref{lemma_stft}.  
Using \eqref{eq_wilson} and the time-frequency localization of
$\sett{f_\lambda: \lambda \in \Lambda}$ and of $g$ we
estimate
\begin{align*}
&\abs{A_{\lambda, \gamma}} 
= \abs{\ip{f_\lambda}{g_\gamma}} = \abs{\ip{V_g f_\lambda}{V_g g_\gamma}}
\\
&\qquad \leq
\int_{\Rtdst} \abs{\Phi(z-\lambda)} \abs{V_g g_\gamma(z)} dz
\\
&\qquad \lesssim
\sum_{\sigma \in \sett{-1,1}^d}
\int_{\Rtdst} \abs{\Phi(z-\lambda)} \abs{V_g g(z-(\gamma_1,\sigma \gamma_2))} dz
\\
&\qquad=
\sum_{\sigma \in \{-1,1\}^d} \env(\lambda-(\gamma_1,\sigma\gamma_2))
\leq
\sum_{\sigma \in \{-1,1\}^{2d}} \env(\lambda-\sigma \gamma).
\end{align*}
Hence, the desired conclusion follows from Proposition \ref{prop_loc}.

 (b) Here we assume that $\{f_\lambda: \lambda \in \Lambda \}$ is a
set of \tf\ molecules such that the associated synthesis operator 
$S^*c = \sum_{\lambda \in \Lambda} c_\lambda f_\lambda$ is bounded
below on some $\ell ^p(\Lambda )$. 
We must show that $S^*$  is bounded
below for all $p \in [1,\infty]$. Since $C_{\cW} $ is an isomorphism
on $M^p(\rd )$, this is equivalent to saying the operator $C_{\cW}
S^*$ is bounded below on some (hence all) $\ell ^p(\Lambda )$. 

The operator $C_{\cW}S^*$ is represented by the matrix
$B=A^*$ with entries 
\begin{align*}
B_{\gamma,\lambda} := \ip{g_\gamma}{f_\lambda},
\qquad \gamma \in \Gamma, \lambda \in \Lambda \, ,
\end{align*}
and satisfies 
\begin{align*}
\abs{B_{\gamma,\lambda}} \leq \sum_{\sigma \in \{-1,1\}^{2d}} \env(\lambda-\sigma \gamma),
\qquad \gamma \in \Gamma, \lambda \in \Lambda.
\end{align*}
To apply Proposition \ref{prop_loc}, we consider the symmetric envelope
$\env^*(x) = \sum_{y \in \group\cdot x} \env(y), x \in \Rtdst.$ 
Then $\env^* \in W(L^\infty,L^1)(\Rtdst)$,  $\env^*(\sigma x) =
\env(x)$, and 
\begin{align*}
\abs{B_{\gamma,\lambda}} \leq \sum_{\sigma \in \{-1,1\}^{2d}} \env ^*(\lambda-\sigma \gamma)
= \sum_{\sigma \in \{-1,1\}^{2d}} \env ^* (\gamma-\sigma \lambda).
\end{align*}
This shows that we can apply Proposition \ref{prop_loc} and the proof
is complete. 
\end{proof}
\begin{rem}
\label{rem_unif_loc_2}
{\rm As in Remark \ref{rem_unif_loc}, we note that the norm bounds for all $p$ in Theorem \ref{th_main_mol} depend on the
envelope $\Theta$, on upper bounds for $\rel(\Lambda)$ and frame or Riesz basis bounds for a particular value of $p$.}
\end{rem}

\end{document}